\numberwithin{equation}{section}
\theoremstyle{definition}
\newtheorem{theorem}{Theorem}[section]
\newtheorem{lemma}[theorem]{Lemma}
\newtheorem{proposition}[theorem]{Proposition}
\newtheorem{definition}[theorem]{Definition}
\newtheorem{remark}[theorem]{Remark}
\newtheorem{lem}[theorem]{Lemma}
\newtheorem{notation}[theorem]{Notation}
\newtheorem{question}[theorem]{Question}
\newtheorem{example}[theorem]{Example}
\newcounter{TmpEnumi}
\newcounter{Tmp2}
\newcommand{\beq}{\begin{equation}}
\newcommand{\eeq}{\end{equation}}
\newcommand{\beqr}{\begin{eqnarray*}}
\newcommand{\eeqr}{\end{eqnarray*}}
\newcommand{\bal}{\begin{align*}}
\newcommand{\eal}{\end{align*}}
\newcommand{\bei}{\begin{itemize}}
\newcommand{\eei}{\end{itemize}}
\newcommand{\limi}[1]{\lim_{{#1} \to \infty}}
\newcommand{\af}{\alpha}
\newcommand{\bt}{\beta}
\newcommand{\gm}{\gamma}
\newcommand{\dt}{\delta}
\newcommand{\ep}{\varepsilon}
\newcommand{\zt}{\zeta}
\newcommand{\et}{\eta}
\newcommand{\ch}{\chi}
\newcommand{\io}{\iota}
\newcommand{\te}{\theta}
\newcommand{\ld}{\lambda}
\newcommand{\sm}{\sigma}
\newcommand{\kp}{\kappa}
\newcommand{\ph}{\varphi}
\newcommand{\ps}{\psi}
\newcommand{\rh}{\rho}
\newcommand{\om}{\omega}
\newcommand{\ta}{\tau}
\newcommand{\Dt}{\Delta}
\newcommand{\Z}{{\mathbb{Z}}}
\newcommand{\R}{{\mathbb{R}}}
\newcommand{\C}{{\mathbb{C}}}
\newcommand{\N}{{\mathbb{Z}}_{> 0}}
\newcommand{\Nz}{{\mathbb{Z}}_{\geq 0}}
\newcommand{\id}{{\mathrm{id}}}
\newcommand{\spec}{{\mathrm{sp}}}
\newcommand{\diag}{{\mathrm{diag}}}
\newcommand{\spn}{{\mathrm{span}}}
\newcommand{\card}{{\mathrm{card}}}
\newcommand{\Aut}{{\mathrm{Aut}}}
\newcommand{\Ad}{{\mathrm{Ad}}}
\newcommand{\Inn}{\operatorname{Inn}}
\newcommand{\Ct}{\operatorname{Ct}}
\newcommand{\Out}{{\mathrm{Out}}}
\newcommand{\T}{{\mathrm{T}}}
\newcommand{\andeqn}{\qquad {\mbox{and}} \qquad}
\newcommand{\wolog}{without loss of generality}
\newcommand{\ca}{C*-algebra}
\newcommand{\hm}{homomorphism}
\newcommand{\tst}{tracial state}
\newcommand{\pj}{projection}
\newcommand{\ct}{continuous}
\renewcommand{\S}{\subset}
\newcommand{\I}{\infty}
\begin{document}
%
%
%
\title[Opposite Algebras]{K-Theory and the Universal Coefficient Theorem
 for Simple Separable Exact C*-Algebras
 Not Isomorphic to Their Opposites}

\author{N.~Christopher Phillips and Maria Grazia Viola}

\address{\hskip-\parindent Department of Mathematics \\
 University of Oregon \\
 Eugene OR 97403-1222 \\
 USA}

\address{\hskip-\parindent
 Department of Mathematical Sciences \\
 Lakehead University\\
 500 University Avenue \\
 Orillia ON L3V 0B9 \\
 Canada}

\email{mviola@lakeheadu.ca}

\subjclass[2000]{46L35, 46L37, 46L40, 46L54}

\date{1~September 2022}

\thanks{N.~Christopher Phillips was partially supported
by NSF Grants DMS-0701076 and DMS-1101742,
and by the
Simons Foundation Collaboration Grant for Mathematicians \#587103.
Maria Grazia Viola was partially supported
by a Research Development Fund from Lakehead
University and an NSERC Discovery Grant.}

\begin{abstract}
We construct uncountably many mutually nonisomorphic
simple separable stably finite unital exact \ca{s} which are not
isomorphic to their opposite algebras.
In particular, we prove that
there are uncountably many possibilities for the $K_0$-group,
the $K_1$-group, and the tracial state space of such an algebra.
We show that these C*-algebras satisfy
the Universal Coefficient Theorem, which is new even for the
already known example of an exact C*-algebra
nonisomorphic to its opposite algebra produced in
earlier work.
\end{abstract}

\maketitle

\section{Introduction}\label{Sec_Intro}

In the last two decades there has been much interest in
finding examples of simple \ca{} not isomorphic
to their opposite algebras.
The motivation for this work
is the Elliott classification program for simple  \ca{s},
which shows that simple separable nuclear unital
\ca{s} that absorb the Jiang-Su algebra~$Z$
tensorially and satisfy the Universal Coefficient Theorem
are classified up to isomorphism by an invariant consisting
of $K$-theory and tracial information.
(See Corollary~D in~\cite{CETWW}, or combine Corollary 4.11
in~\cite{EGLN} and Theorem~A in~\cite{CETWW}.)
For a unital \ca{} $A$, the Elliott invariant is given by
\[
\mathrm{Ell} (A)
 = \bigl( K_0 (A), K_0 (A)_{+}, [1_A], K_1 (A), \T (A), \rho \bigr),
\]
where $(K_0 (A), K_0 (A)_{+}, [1_A], K_1 (A))$
is the scaled ordered
K-theory of~$A$, $\T (A)$ denotes the tracial state simplex, and
$\rho \colon K_0 (A) \times \T (A) \to \mathbb{R}$
is the natural pairing map, $\rho ([p]-[q], \tau) = \tau (p) - \tau (q)$
for projections $p, q \in M_{\I} (A)$ and $\ta \in \T (A)$.
Since the Elliott invariant of a \ca{} is the same as
that of its opposite algebra, simple \ca{s} which
are not isomorphic to their opposite algebra provide examples
of simple \ca{s} that are not isomorphic
despite having the same Elliott invariant.

In~\cite{PhV} we constructed an example of a simple separable
unital exact \ca~$A$ not isomorphic to its opposite algebra.
The algebra $A$ has a number of nice properties:
it is stably finite and approximately divisible,
and it has real rank zero, stable rank one,
and a unique tracial state.
The order on projections over~$A$ is determined by traces, and
$A$ tensorially absorbs the Jiang-Su algebra~$Z$.
Its K-theory is given by
$K_0 (A) \cong {\mathbb{Z}} \big[ \tfrac{1}{3} \big]$
and $K_1 (A) = 0$.
Its Cuntz semigroup is
$W (A)
 \cong {\mathbb{Z}} \big[ \tfrac{1}{3} \big]_{+} \amalg (0, \infty)$.

The purpose of this article is to exhibit many examples
of simple separable exact \ca{s} not isomorphic to
their opposite algebras, and to prove that they satisfy the
Universal Coefficient Theorem.
(This is new even for the example in~\cite{PhV}.)
In particular, we prove that there
are uncountably many possibilities for the K-theory of such
an algebra, while still preserving most of the good properties
of the algebra in~\cite{PhV}.
For any odd prime $q$ such that $- 1$ is not a square mod~$q$,
and for any UHF algebra $B$ stable under tensoring with
the $q^{\infty}$~UHF algebra
(the algebra ${\displaystyle{\bigotimes_{n = 1}^{\infty} M_q}}$),
we produce a simple separable exact \ca~$D$,
not isomorphic to its opposite algebra,
with real rank zero and a unique tracial state,
such that $K_* (D) \cong K_* (B)$.
For any $q$ and $B$ as above,
and for any Choquet simplex~$\Dt$,
we give a simple separable exact \ca~$D$,
not isomorphic to its opposite algebra,
with real rank one, such that $K_* (D) \cong K_* (B)$,
and whose tracial state space is isomorphic to~$\Dt$.
For any $q$ and $B$ as above,
and for any countable abelian group~$G$,
we give a simple separable exact \ca~$D$,
not isomorphic to its opposite algebra,
with real rank zero and a unique tracial state,
such that $K_0 (D) \cong K_0 (B)$ and
$K_1 (D) \cong G \otimes_{\Z} {\mathbb{Z}} \big[ \tfrac{1}{q} \big]$.
We show that all the \ca{s} we construct
satisfy the Universal Coefficient Theorem.
We give further
information on the algebras described above, including
showing that the order on projections is determined by
traces, computing the Cuntz semigroups, and showing
that the algebras have stable rank one and tensorially
absorb the $q^{\infty}$ UHF algebra and the Jiang-Su
algebra.
The examples described above are not
the most general that can be obtained with our
method, but are chosen to illustrate the possibilities.
There are infinitely many primes $q$ such that $-1$
is not a square mod~$q$, so there are infinitely many
choices for $q$ covered by our examples.

Our results show an essential difference between
the class of nuclear  \ca{s} absorbing
the Jiang-Su algebra and the class of exact
\ca{s} absorbing the Jiang-Su algebra, and the importance
of nuclearity for the classication of simple \ca{s}.

Question~8.1 in~\cite{PhV} asked whether for any
UHF algebra~$B$ there exists a simple separable
exact \ca{} $D$ not isomorphic to its opposite
algebra that has the same K-theory as~$B$, and
the same properties as the algebra of~\cite{PhV}.
Our results provide a partial positive answer to this question.

The paper is organized as follows.
Section~\ref{Sec_Prelim} contains preliminaries.
In particular, we recall some relevant definitions
and constructions involving von Neumann algebras
and the Connes invariant.
In Section~\ref{asymphom} we recall the definition of the
continuous Rokhlin property for an action of a finite group
$G$ on a separable unital \ca.
The model
action of $G$ on the UHF algebra of type $\card (G)^{\infty}$
is an example of an action with this property.
Our main
result is that if $A$ is a separable, unital \ca{}
satisfying the Universal Coefficient Theorem, and $\alpha$
is an action of a finite abelian group $G$ on $A$ with
the continuous Rokhlin property, then the fixed point
algebra $A^{\af}$ and the crossed product $C^* (G,A,\alpha)$
satisfy the Universal Coefficient Theorem.
In Theorem 1.10
of \cite{Gd2}, Gardella has later extended this result to actions of
second countable compact groups.
In Section
\ref{Sec_Constr} we construct our basic example,
one algebra $D$ for each prime $q$ such that
 $- 1$ is not a square mod~$q$, satisfying
the same properties as the algebra in~\cite{PhV},
and whose $K$-theory is given by
$K_0 (D) \cong {\mathbb{Z}} \big[ \tfrac{1}{q} \big]$
and $K_1 (D) = 0$.
Moreover, we show that $D$ satisfies the Universal
Coefficient Theorem.
Section~\ref{Sec_Main} contains
the main step towards the proof that these algebras
are not isomorphic to their opposites.
Each of them has a unique tracial state.
We prove that the weak closure of $D$
in the Gelfand-Naimark-Segal representation
associated with this tracial state
is not isomorphic to its opposite algebra.
In Section~\ref{Sec_NotIsoOpp},
we tensor these basic examples with other
simple separable nuclear unital \ca{s}.
The main result of Section~\ref{Sec_Main}
also applies to such tensor products,
and we thus obtain the examples described above.
Section~\ref{Sec_Probs} contains some open problems.

The authors would like to thank E.~Gardella for
useful discussions about the Universal Coefficient Theorem
and for suggesting the argument used to show that the
algebra $D$ in Section \ref{Sec_Constr} satisfies the
Universal Coefficient Theorem.

Throughout,
we denote the circle $\{ \zt \in \C \colon | \zt | = 1 \}$ by~${\mathbb{T}}$.
We also denote the cyclic group $\Z / n \Z$ by $\Z_n$;
the $p$-adic integers will not be used in this paper.

\section{Preliminaries}\label{Sec_Prelim}

In this section,
we provide some background material about opposite algebras,
automorphisms of ${\mathrm{II}}_1$ factors,
the Connes invariant,
and the Cuntz semigroup.

First we recall the definition of the opposite algebra
and the conjugate algebra of a \ca~$A$.

\begin{definition}\label{D:2.6}
Let $A$ be a \ca.
The opposite algebra $A^{\mathrm{op}}$ is the \ca{}
which has the same vector space structure, norm, and adjoint
as~$A$, while the product of $x$ and $y$ in $A^{\mathrm{op}}$,
which we denote by $x \star y$ when necessary,
is given by $x \star y = y x$.
If $\om \colon A \to \C$
is a linear functional,
then we let $\om^{\mathrm{op}}$
denote the same map but regarded
as a linear functional
$\om^{\mathrm{op}} \colon A^{\mathrm{op}} \to \C$.
The conjugate algebra $A^{\mathrm{c}}$ is the \ca{}
whose underlying vector space structure is the conjugate of~$A$,
that is,
the product of $\lambda \in \C$ and $x \in A^{\mathrm{c}}$ is equal to
${\overline{\lambda}} x$ (as evaluated in~$A$),
and whose ring structure, adjoint, and norm are the same as for~$A$.
\end{definition}

\begin{remark}\label{R:2.7}
The map $x \mapsto x^{*}$ is an isomorphism from
$A^{\mathrm{c}}$ to $A^{\mathrm{op}}$.
\end{remark}

\begin{notation}\label{N_3704_GNS}
Let $A$ be a \ca,
and let $\om$ be a state on~$A$.
We denote the triple consisting of the Gelfand-Naimark-Segal
representation, its Hilbert space, and its standard cyclic vector
by $(\pi_{\om}, H_{\om}, \xi_{\om})$.

Also,
for any \ca{} or von Neumann algebra~$A$
and any tracial state $\ta$ on~$A$,
we denote the usual $L^2$-norm by
$\| x \|_{2, \ta} = ( \tau (x^{*} x) )^{1/2}$ for $x \in A$.
When no confusion can arise about the tracial state used,
we write $\| x \|_2$.
\end{notation}

It seems useful to make explicit the following fact,
which has been used implicitly in previous papers.

\begin{lemma}\label{L_3704_OppTr}
Let $A$ be a \ca,
and let $\ta$ be a tracial state on~$A$.
Then $\ta^{\mathrm{op}}$ is a tracial state on $A^{\mathrm{op}}$
and, as von Neumann algebras, we have
$\pi_{\ta^{\mathrm{op}}} (A^{\mathrm{op}})''
  \cong [\pi_{\ta} (A)'']^{\mathrm{op}}$.
\end{lemma}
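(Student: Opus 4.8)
The plan is to treat the two assertions separately: the first by a direct computation, and the second by passing to the standard (modular) conjugation attached to a trace.

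For the first assertion, recall that $A^{\mathrm{op}}$ has the same involution and norm as~$A$, and that for a self-adjoint element the spectrum computed in $A$ coincides with the one computed in $A^{\mathrm{op}}$; hence the two algebras have the same positive cone. Since $\ta^{\mathrm{op}}$ is the same linear functional as~$\ta$, it is therefore positive and of norm one. The trace identity $\ta^{\mathrm{op}}(x \star y) = \ta^{\mathrm{op}}(y \star x)$ unwinds to $\ta(y x) = \ta(x y)$, which holds because $\ta$ is a trace. So $\ta^{\mathrm{op}}$ is a tracial state.

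The key observation for the second assertion is that the two Gelfand-Naimark-Segal Hilbert spaces are literally equal. Indeed, $A$ and $A^{\mathrm{op}}$ share the same underlying vector space and involution, and for $x, y \in A$ the $A^{\mathrm{op}}$-inner product is $\ta^{\mathrm{op}}(y^{*} \star x) = \ta(x y^{*})$, which equals $\ta(y^{*} x)$ by the trace property; this is exactly the $A$-inner product. Thus the null spaces agree and the identity map on $A$ extends to a unitary $H_{\ta^{\mathrm{op}}} \to H_\ta$. Under this identification $\pi_{\ta^{\mathrm{op}}}(a)$ acts on $x$ by $a \star x = x a$, so $\pi_{\ta^{\mathrm{op}}}$ is realized as the right regular representation $\rho$, where $\rho(a)(x \xi_\ta) = (x a) \xi_\ta$ on the dense copy of~$A$.

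It remains to identify $\rho(A)''$ with $[\pi_\ta(A)'']^{\mathrm{op}}$. Write $M = \pi_\ta(A)''$. Because $\ta$ is a trace, $\xi_\ta$ is a trace vector, so the formula $J(a \xi_\ta) = a^{*} \xi_\ta$ defines an antiunitary involution $J$ on $H_\ta$ for which $\xi_\ta$ is cyclic and separating for~$M$ and $J M J = M'$; this is the standard modular theory of a trace. A direct computation gives $\rho(a) = J \pi_\ta(a^{*}) J$, so, since $A = A^{*}$ and conjugation by $J$ preserves the weak topology, $\rho(A)'' = J M J = M'$. Finally, the map $\phi(x) = J x^{*} J$ is a bijection of $M$ onto $M'$; it is linear (the antilinearity of $J$ cancels the conjugation coming from the adjoint), $*$-preserving, and anti-multiplicative, hence a $*$-anti-isomorphism of $M$ onto $M'$, equivalently a $*$-isomorphism of $M^{\mathrm{op}}$ onto~$M'$. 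Combining, $\pi_{\ta^{\mathrm{op}}}(A^{\mathrm{op}})'' = \rho(A)'' = M' \cong M^{\mathrm{op}}$, as required.

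The computations involved (positivity, the coincidence of inner products, the intertwining $\rho(a) = J \pi_\ta(a^{*}) J$) are immediate; the only genuine input is that the trace vector $\xi_\ta$ is cyclic and separating with $J M J = M'$. The main point to get right is the bookkeeping around $J$: its antilinearity is exactly what makes $x \mapsto J x^{*} J$ linear but multiplication-reversing, which is precisely what turns $M'$ into a model for $M^{\mathrm{op}}$ rather than for $M$ itself. In the non-unital case one replaces the unit and $\xi_\ta$ by an approximate identity throughout, and no new difficulty arises.
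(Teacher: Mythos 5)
Your proof is correct, and the first half (that $\ta^{\mathrm{op}}$ is a tracial state) is essentially identical to the paper's. For the von Neumann algebra isomorphism, however, you take a genuinely different route. The paper avoids modular theory altogether: it observes that $\| \cdot \|_{2, \ta^{\mathrm{op}}} = \| \cdot \|_{2, \ta}$ on $A$, identifies $\pi_{\ta} (A)''$ with the norm-bounded elements of the Hausdorff completion of $A$ in $\| \cdot \|_{2, \ta}$ (and likewise for the opposite algebra), and then notes that the identity map of $A$ extends to a linear, adjoint-preserving, multiplication-reversing bijection between the two completions. You instead identify $H_{\ta^{\mathrm{op}}}$ with $H_{\ta}$, realize $\pi_{\ta^{\mathrm{op}}}$ as the right regular representation, and invoke the commutation theorem $J M J = M'$ for the trace conjugation $J$, so that $x \mapsto J x^* J$ gives the anti-isomorphism of $M = \pi_{\ta}(A)''$ onto $\rh(A)'' = M'$. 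Both arguments rest on a standard structural fact about finite von Neumann algebras of comparable depth (the $L^2$-picture of $M$ versus $J M J = M'$); yours has the advantage of producing an explicit spatial implementation of the anti-isomorphism and fitting into the general Tomita--Takesaki framework, while the paper's is more self-contained and makes the "same underlying $L^2$-space, reversed product" picture completely transparent. Your bookkeeping with $J$ (antilinearity cancelling the adjoint to make $x \mapsto J x^* J$ linear and anti-multiplicative) is accurate, and the remark about the non-unital case is fine since $A \xi_{\ta}$ is still dense.
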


\begin{proof}
The functional $\ta^{\mathrm{op}}$ is a state
because $A$ and $A^{\mathrm{op}}$
have the same norm and positive elements.
It is immediate that $\ta^{\mathrm{op}}$ is tracial.

Next, we claim that
$\| x \|_{2, \ta^{\mathrm{op}}} = \| x \|_{2, \ta}$
for all $x \in A$.
Indeed,
using the trace property at the third step,
\[
(\| x \|_{2, \ta^{\mathrm{op}}} )^2
  = \ta^{\mathrm{op}} (x^* \star x)
  = \ta (x x^*)
  = \ta (x^* x)
 = (\| x \|_{2, \ta} )^2.
\]

We can identify $\pi_{\ta} (A)''$ with the set of
elements in the Hausdorff completion of $A$ in $\| \cdot \|_{2, \ta}$
which are limits in $\| \cdot \|_{2, \ta}$ of norm bounded
sequences in~$A$,
and similarly with $\pi_{\ta^{\mathrm{op}}} (A^{\mathrm{op}})''$.
It follows from the claim that the identity map of~$A$
extends to a linear isomorphism
$\pi_{\ta} (A)'' \to \pi_{\ta^{\mathrm{op}}} (A^{\mathrm{op}})''$,
which is easily seen to preserve adjoints and reverse multiplication.
\end{proof}

To prove that our \ca{s}
are not isomorphic to their opposite algebras,
we will need some terminology and results
for the automorphisms of a ${\mathrm{II}}_1$~factor.

\begin{definition}\label{D:2.8}
For any von Neumann algebra~$M$,
we denote by ${\operatorname{Inn}} (M)$
the group of inner automorphisms of~$M$,
that is, the automorphisms of the form $\Ad (u)$
for some unitary $u \in M$.
Let $M$ be a ${\mathrm{II}}_1$~factor with separable predual.
Denote by $\tau$ the unique tracial state on $M$.
An automorphism $\ph$ of $M$ is {\emph{approximately inner}}
if there exists a sequence of unitaries $(u_n)_{n \in \N}$ in~$M$
such that $\Ad (u_n) \to \ph$ pointwise in $\| \cdot \|_{2}$.
Denote by ${\operatorname{\overline{Inn (M)}}}$
the group of approximately inner automorphisms of~$M$.
\end{definition}

Another important class of automorphisms
consists of the centrally trivial automorphisms of~$M$.

\begin{definition}\label{D:2.9}
Let $M$ be a ${\mathrm{II}}_1$ factor with
separable predual.
Let $\tau$ be the unique tracial state on $M$.
Recall that a bounded sequence $(x_n)_{n \in \N}$ in $M$ is central
if $\displaystyle{\lim_{n \to \infty} \| x_n a - a x_n \|_2 = 0}$
for all $a \in M$.
An automorphism $\ph$ of $M$ is said to be
{\emph{centrally trivial}}
if $\displaystyle{\lim_{n \to \infty} \| \ph (x_n) - x_n \|_2 = 0}$
for every central sequence $(x_n)_{n \in \N}$ in~$M$.
Let ${\operatorname{Ct}} (M)$ denote the set
of all centrally trivial automorphisms of~$M$.
\end{definition}

By the comments following Definition~3.1 in~\cite{Connes2},
the set ${\operatorname{Ct}} (M)$
is a normal subgroup of ${\operatorname{Aut}} (M)$.
It is obviously closed.

We recall below from~\cite{Connes1}
the definition of Connes invariant $\ch (M)$
of a ${\mathrm{II}}_1$ factor~$M$.
In~\cite{Connes1}, Connes uses centralizing
sequences to define the centrally trivial automorphisms.
For $\om \in M_*$ and
$x \in M$, we define $[ \om, x] \in M_*$
by $[ \om, x] (y) = \om (x y - y x)$
for $y \in M$.
A bounded sequence $(x_n)_{n \in \N}$
is then said to be centralizing
if $\displaystyle{\lim_{n \to \infty} \| [\om, x_n] \| = 0}$
for all $\om \in M_*$.
In general, centralizing sequences are the right
ones to use to define the Connes invariant.

A bounded sequence $(x_n)_{n \in \N}$ in a ${\mathrm{II}}_1$
factor~$M$ is central if and only if for some
(equivalently, for any) strong operator dense subset
$S \subseteq M$, we have $x_ny-yx_n  \to 0$ in the strong
operator topology for all $y \in S$.
In a ${\mathrm{II}}_1$~factor, we claim that
the central sequences are the same as the centralizing
sequences.
The implication from ($\beta$) to ($\gamma$)
in Proposition~2.8 of~\cite{Connes}
shows that centralizing sequences in~$M$ are central.
For the reverse,
in Proposition 2.8($\alpha$) of~\cite{Connes},
we take $\ph$ there to be the tracial state~$\ta$.
Since $[\ta, y] = 0$ for all $y \in M$,
the implication from ($\alpha$) to ($\gamma$)
there shows that central sequences in~$M$ are
centralizing.

\begin{definition}\label{D:2.10}
Let $M$ be a ${\mathrm{II}}_1$ factor with separable predual.
The {\emph{Connes invariant}} $\ch (M)$ is the subgroup
of the outer automorphism group
${\operatorname{Out}} (M) = \Aut (M) / {\operatorname{Inn}} (M)$
obtained as the center
of the image under the quotient map of
the group of approximately inner automorphisms.
\end{definition}

\begin{remark}\label{R:2.11}
If $M$ is isomorphic to its tensor product
with the hyperfinite ${\mathrm{II}}_1$~factor,
then $\ch (M)$ is the image in
$\Aut (M) / {\operatorname{Inn}} (M)$ of
${\operatorname{Ct}} (M) \cap {\operatorname{\overline{Inn (M)}}}$.
See~\cite{Connes1}.
\end{remark}

In general it is not easy to compute the Connes invariant
of a ${\mathrm{II}}_1$ factor.
For the hyperfinite ${\mathrm{II}}_1$ factor~$R$,
every centrally trivial automorphism is inner
by Theorem 3.2 (1) in~\cite{Connes3}, so $\ch (R) = \{ 0 \}$.
Moreover, any approximately inner automorphism
of the free group factor on $n$ generators
$\mathcal{L} (\mathbb{F}_n)$
is inner, so $\chi (\mathcal{L} (\mathbb{F}_n)) = \{ 0 \}$.
(See~\cite{VJones},  or Lemma 3.2 in~\cite{Viola}.)

A useful tool to compute the Connes invariant
of some ${\mathrm{II}}_1$ factors is the short exact
sequence introduced in~\cite{Connes1}.
Assume that $N$ is a ${\mathrm{II}}_1$ factor
without nontrivial hypercentral sequences, that is,
central sequences that asymptotically commute
in the $L^2$-norm with every central sequence of~$N$.
Let $G$ be a finite subgroup of ${\operatorname{Aut}} (N)$
such that $G \cap {\overline{\operatorname{Inn} (N)}} = \{ 1 \}$,
and let $\te \colon G \to {\operatorname{Aut}} (N)$ be the inclusion,
regarded as an action of $G$ on~$N$.
Define $K = G \cap {\operatorname{Ct}} (N)$ and let
$K^{\bot} \subseteq \widehat{G}$ be its annihilator, that is,
\[
K^{\bot} = \big\{f \colon G \to \mathbb{T} \colon
   {\mbox{$f$ is a homomorphism and $f |_K = 1$}} \big\}
 \subseteq \widehat{G}.
\]
For any $M$, let
$\xi_M \colon {\operatorname{Aut}} (M) \to {\operatorname{Out}} (M)$
denote the quotient map.
Let $H \subseteq {\operatorname{Aut}} (N)$ be the subgroup
\begin{equation}\label{Eq_2815_H}
H = \big\{ \Ad (u) \colon {\mbox{$u \in N$ is unitary and $\rh (u) = u$
        for all $\rh \in G$}} \big\}.
\end{equation}
Let $G \vee {\operatorname{Ct}} (N)$ be the subgroup of
${\operatorname{Aut}} (N)$
generated by $G \cup {\operatorname{Ct}} (N)$.
(It is closed since $G$ is finite and ${\operatorname{Ct}} (N)$
is closed and normal.)
Taking the closure in the topology of pointwise $L^2$-norm convergence,
let
\[
L = \xi_N \big( (G \vee {\operatorname{Ct}} (N))
   \cap \overline{H} \big)
  \subseteq {\operatorname{Out}} (N).
\]
Then the Connes short exact sequence
(Theorem~4 of~\cite{Connes1}) is
\begin{equation}\label{Connessequence}
\{ 1 \} \longrightarrow K^{\bot}
        \overset{\partial}{\longrightarrow} \chi (N \rtimes_{\te} G)
        \overset{\Pi}{\longrightarrow} L
        \longrightarrow \{ 1 \}.
\end{equation}

We briefly
describe the maps $\partial$ and $\Pi$ in this exact sequence.
We follow Section~5 in~\cite{Jones1}.
For $g \in G$, let $u_g \in N \rtimes_{\te} G$ be the standard unitary
in the crossed product associated to~$g$,
so that $\te_g = \Ad (u_g) |_N$.
Given an element $x$ in $N \rtimes_{\te} G$,
write it as $\displaystyle{\sum_{g \in G} a_g u_g}$
with $a_g \in N$ for $g \in G$.
For each $\varphi \colon G \to \mathbb{T}$ in $K^{\perp}$, define
$\Delta (\varphi) \colon N \rtimes_{\te} G \to N \rtimes_{\te} G$
by
\[
\Delta (\varphi) \left( \sum_{g \in G} a_g u_g \right)
 = \sum_{g \in G} \varphi (g) a_g u_g.
\]
Then
\[
\Delta (\varphi)
 \in \Ct (N \rtimes_{\te} G) \cap \overline{\Inn (N \rtimes_{\te} G)},
\]
and $\partial = \xi_{N \rtimes_{\te} G} \circ \Delta$.

To define~$\Pi$, for any $\sigma \in \chi (N \rtimes_{\te} G)$
choose an automorphism
$\alpha
 \in \Ct (N \rtimes_{\te} G) \cap \overline{\Inn (N \rtimes_{\te} G)}$
such that $\xi_{N \rtimes_{\te} G} (\alpha) = \sigma$.
Since $G \cap \Inn (N) = \{1 \}$, by Corollary~6 and Lemma~2 in
\cite{VJones} (or by Lemma 15.42 in \cite{EvKa}) there exist a
sequence $(u_n)_{n \in \N}$ of unitaries in $N^{\te}$
and a unitary $z \in N \rtimes_{\te} G$ such that
$\displaystyle{\alpha
 = \Ad (z) \circ \left[ \lim_{n \to \infty} \Ad (u_n) \right]}$.
Set $\varphi_{\sigma} = \Ad (z^*) \circ \alpha |_N$.
One can then show
that $\varphi_{\sigma} \in (G \vee \Ct (N)) \cap \overline{H}$, and
that the map $\xi_{N \rtimes_{\te} G} ( \varphi_{\sigma} )$ does not depend on
the choice of the representative $\alpha$ but only on the
class $\sigma$.
Therefore the map
$\Pi \colon \chi (N \rtimes_{\theta} G) \to L$, given
by $\Pi (\sigma) = \xi_{N \rtimes_{\te} G} (\varphi_{\sigma})$,
is well defined.

To see that $\Pi$ is surjective, let $\mu \in L$
and choose $\alpha \in (G \vee \Ct (N)) \cap \overline{H}$
such that $\xi_N (\alpha) = \mu$.
Since $\alpha$ is
the limit of automorphisms that commute with $G$, it also
commutes with $G$.
It follows that the map
\begin{equation}\label{betamu}
\et \left( \sum_{g \in G} a_g u_g \right)
  = \sum_{g \in G} \alpha (a_g) u_g
\end{equation}
is an automorphism of $N \rtimes_{\te} G$.
Moreover,
$\et
 \in \Ct (N \rtimes_{\te} G) \cap \overline{\Inn (N \rtimes_{\te} G)}$
and $\Pi (\xi_{N \rtimes_{\te} G} (\et)) = \mu$.

An important concept in the classification of
automorphisms of the hyperfinite ${\mathrm{II}}_1$ factor is
the one of obstruction to lifting, defined by Connes in
Section 1 of~\cite{Connes3}.
It will play a key role in showing that
our algebras are not isomorphic to their opposites.

\begin{definition}\label{D:2.12}
Let $M$ be a ${\mathrm{II}}_1$ factor
and let $\alpha$ be an automorphism of~$M$.
Let $n$ be the smallest nonnegative integer
such that there is a unitary $u \in M$
with $\alpha^n = \Ad (u)$.
If no power of $\alpha$ is inner, we set $n = 0$.
Since $M$ is a factor, it is easy to check that
there is $\ld \in \C$
such that $\ld^n = 1$ and
$\alpha (u) = \lambda u$.
(Simply apply
$\alpha^{n + 1} = \alpha \circ \alpha^n = \alpha^n \circ \alpha$
to any element $x$ in $M$.)
We call $\lambda$ the {\emph{obstruction to lifting}} of
$\alpha$, and refer to the pair $(n, \lambda)$
as the outer invariant of $\alpha$.
\end{definition}

Next, we recall what it means
for the order on projections to be determined by traces.
Let $A$ be a \ca{} and denote by
$M_n (A)$ the $n \times n$ matrices with entries in~$A$.
Let $M_{\infty} (A)$ denote the algebraic direct limit
of the sequence $(M_n (A), \varphi_n)_{n \in \N}$,
in which
$\varphi_n \colon M_n (A) \to M_{n + 1} (A)$
is defined by
$a \mapsto
 \left( \begin{smallmatrix} a & 0 \\
   0 & 0 \end{smallmatrix} \right)$.
Denote by $\T (A)$ the set of tracial states of $A$.

\begin{definition}\label{D:OrdDetTr}
We say that
{\emph{the order on projections over $A$ is determined by traces}}
if whenever $p_1, p_2 \in M_{\infty} (A)$ are projections
such that $\tau (p_1) < \tau (p_2)$ for every $\tau$ in $\T (A)$,
then $p_1$ is Murray-von Neumann subequivalent to~$p_2$.
\end{definition}

We conclude this section by recalling
the definitions of Cuntz subequivalence
and the Cuntz semigroup.
See Section~2 of~\cite{BPT}
and the references there for the definitions
below and the proofs of the assertions made here.

\begin{definition}\label{D:2.13}
Let $A$ be a \ca{} and let $a, b \in M_{\infty} (A)_{+}$.
We say that $a$ is Cuntz subequivalent to~$b$,
denoted $a \precsim b$,
if there exists a sequence $(v_n)_{n \in \N}$ in $M_{\infty} (A)$
such that
$\displaystyle{\lim_{n \to \infty} \| v_n b v_n^{*} - a \| = 0}$.
If $a \precsim b$ and $b \precsim a$ we say that $a$
is Cuntz equivalent to $b$ and write $a \sim b$.
\end{definition}

Cuntz equivalence is an equivalence relation,
and we write $\langle a \rangle$ for the equivalence class of~$a$.

\begin{definition}\label{D:2.14}
Let $A$ be a \ca.
The {\emph{Cuntz semigroup}} of~$A$
is $W (A) = M_{\infty} (A)_{+} / {\sim}$.
We define a semigroup operation on $W (A)$ by
\[
\langle a \rangle + \langle b \rangle
 = \left\langle \left(  \begin{matrix}
  a     &  0        \\
  0     &  b
\end{matrix} \right) \right\rangle
\]
and a partial order by $\langle a \rangle \leq \langle b \rangle$
if and only if $a \precsim b$.
With this structure $W (A)$ becomes a positively ordered abelian
semigroup with identity.
\end{definition}

Usually,
it is hard to compute the Cuntz semigroup of a \ca,
but the following remark computes
$W (A)$ for the \ca{s}~$A$ of interest here.

\begin{remark}\label{R:2.15}
Denote the Jiang-Su algebra by $Z$.
Assume that $A$ is a simple unital exact stably finite \ca{}
which is $Z$-stable, that is, $Z \otimes A \cong A$.
Let $V (A)$ be the Murray-von Neumann semigroup of~$A$.
For any compact convex set~$\Dt$,
let ${\operatorname{LAff}}_{\operatorname{b}} (\Dt)_{++}$
denote the set of bounded strictly positive
lower semicontinuous affine functions on~$\Delta$.
By Corollary 5.7 of~\cite{BPT}, we have
\[
W (A) \cong V (A)
 \amalg {\operatorname{LAff}}_{\operatorname{b}} (\T (A))_{++}.
\]
The addition and order on the disjoint union
are defined as follows.
On each part of the disjoint union,
the addition and order are as usual.
For the other cases,
for $x \in V (A)$
define ${\widehat{x}} \colon \T (A) \to [0, \infty)$
by $\widehat{x} (\tau) = \tau (x)$ for $\tau \in \T (A)$.
Now let $x \in V (A)$ and
$y \in {\operatorname{LAff}}_{\operatorname{b}} (\T (A))_{++}$.
Then $x + y$ is the function
${\widehat{x}} + y
 \in {\operatorname{LAff}}_{\operatorname{b}} (\T (A))_{++}$.
Also, $x \leq y$ if and only if
$\widehat{x} (\tau) < y (\tau)$
for all $\tau \in \T (A)$,
and $y \leq x$ if and only if $y (\tau) \leq \widehat{x} (\tau)$
for all $\tau \in \T (A)$.
\end{remark}

\section{The continuous Rokhlin property and the
  Universal Coefficient Theorem}\label{asymphom}

We recall the definition of asymptotic homomorphism, and what
it means for an action of a finite group on a \ca{} to
have the continuous Rokhlin property.
The main result of this
section is that if $A$ is a separable unital \ca{}
satisfying the Universal Coefficient Theorem, and $\af$ is an
action of a finite group $G$ on $A$ satisfying the continuous
Rokhlin property, then the fixed point algebra $A^{\af}$ and
the crossed product $C^* (G, A, \af)$ satisfy the Universal
Coefficient Theorem.
Gardella has later extended this result to
action of second countable compact groups.
(See Theorem 1.10 in \cite{Gd2}.)

\begin{definition}\label{ContinuousRokhlin}
Let $A$ be a separable, unital \ca, and let
$\alpha \colon G \to \Aut (A)$ denote an action of a finite group
$G$ on $A$.
We say that $\alpha$ has the
{\emph{continuous Rokhlin property}} if
there exist continuous functions $t \to e_g^{(t)}$
from $[0, \infty)$ to $A$,
for $g \in G$,
such that:
\begin{enumerate}
\item\label{orthogonality}
For each $t \in [0, \infty)$, $\big( e_g^{(t)} \big)_{g \in G}$
is a family of mutually orthogonal projections with
${\displaystyle{\sum_{g \in G} e_g^{(t)} = 1}}$.
\item\label{shift}
${\displaystyle{\lim_{t \to \infty}
   \big\| \alpha_g \big( e_h^{(t)} \big) - e_{g h}^{(t)} \big\| = 0}}$
for every $g, h \in G$.
\item\label{commutativity}
For any $g \in G$ and $a \in A$, we have
$\displaystyle{\lim_{t \to \infty}
 \big\| e_g^{(t)} a - a e_{g}^{(t)} \big\| = 0}$.
\end{enumerate}
\end{definition}

\begin{lem}\label{L_5Y02_CtRkTP}
Let $A$ and $B$ be separable unital \ca{s},
let $G$ be a finite group,
and let $\af \colon G \to \Aut (A)$
and $\bt \colon G \to \Aut (B)$
be actions of $G$ on $A$ and~$B$.
Assume that $\af$ has the continuous Rokhlin property.
Let $A \otimes B$ be any C$^*$~tensor product
on which the tensor product action $g \mapsto \af_g \otimes \bt_g$
is defined.
Then $\af \otimes \bt$ has the continuous Rokhlin property.
\end{lem}

\begin{proof}
Let $\big( e_g^{(t)} \big)_{g \in G, \, t \in [0, \infty)}$
be a family of projections in~$A$
as in Definition~\ref{ContinuousRokhlin}
for the action $\af$.
Then $\big( e_g^{(t)} \otimes 1 \big)_{g \in G, \, t \in [0, \infty)}$
is a family of projections in~$A \otimes B$
as in Definition~\ref{ContinuousRokhlin}
for the action $\af \otimes \bt$.
\end{proof}

The following example of an action of a finite group with the
continuous Rokhlin property will be needed in
Proposition~\ref{P_2_6}.

\begin{example}\label{modelaction}
Let $G$ be a topological group, and let $d_1, d_2, \ldots \in \N$.
Let $\rh = \big( \rh^{(1)}, \rh^{(2)}, \ldots \big)$ be a sequence
of unitary representations \mbox{$\rh^{(k)} \colon G \to L (\C^{d_k})$}
of~$G$.
Define actions $\nu^{(k)} \colon G \to \Aut (L (\C^{d_k}))$
by $\nu^{(k)}_g (a) = \rh^{(k)} (g) a \rh^{(k)} (g)^*$ for
$k \in \N$, $g \in G$, and $a \in L (\C^{d_k})$.
Let $B_{\rh}$
be the UHF~algebra
\[
B_{\rh} = \bigotimes_{k = 1}^{\infty} L (\C^{d_k}),
\]
and let $\mu^{\rh} \colon G \to \Aut (B_{\rh})$
be the product type action given by
\[
\mu^{\rh}_g = \bigotimes_{k = 1}^{\infty} \nu^{(k)}_g.
\]

For a fixed unitary representation $\rh \colon G \to L (\C^d)$,
we abbreviate $(\rh, \rh, \ldots)$ to~$\rh$,
so that $B_{\rh}$ is the $d^{\I}$~UHF algebra,
and the action is given by
\[
g \mapsto \mu^{\rh}_g
 = \bigotimes_{k = 1}^{\infty} \Ad (\rh (g)) \in \Aut (B_{\rh}).
\]
When $G$ is finite with $\card (G) = d$,
and $\rh$ is the regular representation
$\ld \colon G \to L (l^2 (G))$, we write
\[
B_G = \bigotimes_{k = 1}^{\infty} L (l^2 (G))
\andeqn
g \mapsto \mu_g^{G}
  = \bigotimes_{k = 1}^{\infty} \Ad (\ld (g)) \in \Aut (B_{G}),
\]
and when $\rh$ is the direct sum $\ld^m$ of $m$ copies of~$\ld$,
we write
\[
B_{G, m} = \bigotimes_{k = 1}^{\infty} L (l^2 (G)^m)
\andeqn
g \mapsto \mu_g^{G, m} = \bigotimes_{k = 1}^{\infty} \Ad (\ld^m (g))
 \in \Aut (B_{G, m}).
\]
These are product type actions of $G$
on the $d^{\I}$~and $(m d)^{\I}$~UHF algebras.
\end{example}

We fix some notation.
For any index set~$S$ and $s \in S$, we
denote by $\dt_s \in l^2 (S)$ the standard basis vector,
determined by
\[
\dt_s (t) = \begin{cases}
   1 & \hspace*{1em} t = s
        \\
   0 & \hspace*{1em} t \neq s.
\end{cases}
\]

\begin{lem}\label{L:PTypeCRP}
Let $G$ be a finite group.
Then the action
$\mu^{G, m} \colon G \to \Aut (B_{G, m})$ of Example
\ref{modelaction} has the continuous Rokhlin property.
\end{lem}

\begin{proof}
We use the notation above.
Recall that $\ld^m$ is the direct sum of $m$ copies of
the regular representation of~$G$,
and define $\nu \colon G \to \Aut ( L (l^2 (G)^m) )$
by $\nu_g = \Ad (\ld^m (g))$.

We begin with a construction involving just two tensor factors.
Let
\[
v \colon l^2 (G)^m \otimes l^2 (G)^m \to l^2 (G)^m \otimes l^2 (G)^m
\]
be the unitary determined by $v (\xi \otimes \et) = \et \otimes \xi$
for $\xi, \et \in l^2 (G)^m$.
Equip
\[
L \big( l^2 (G)^m \otimes l^2 (G)^m \big)
   = L (l^2 (G)^m) \otimes L (l^2 (G)^m)
\]
with the action $g \mapsto \nu_g \otimes \nu_g$.
Then $v$ is $G$-invariant.
Since \mbox{$L \big( l^2 (G)^m \otimes l^2 (G)^m \big)^G$}
is finite dimensional,
there is a \ct\  path $t \mapsto z_t$ of $G$-invariant unitaries
in $L \big( l^2 (G)^m \otimes l^2 (G)^m \big)$
such that $z_0 = 1$ and $z_1 = v$.

For every $g \in G$,
let $\delta_g$ be the corresponding standard basis
vector in $l^2 (G)$,  and let $p_g \in L (l^2 (G)^m)$ be the
projection on the $m$~dimensional subspace spanned by
the standard basis vectors
$\dt_{g, j} = (0, \ldots, 0, \delta_g, 0, \ldots, 0)$
for $1 \leq j \leq m$, where $\delta_g$ is in the $j$-th position.
Then $\nu_g (p_h) = p_{g h}$ for every $g, h \in G$,
and $\displaystyle{\sum_{g \in G} p_g = 1.}$
For $n \in \Nz$, $t \in [n, \, n + 1]$, and $g \in G$,
we define
\[
e_g^{(t)} = 1 \otimes 1 \otimes \cdots \otimes 1 \otimes
      z_{t - n} (p_g \otimes 1) z_{t - n}^* \otimes 1 \otimes
      1 \otimes 1 \otimes \cdots
    \in B_{G, m},
\]
with the expression $z_{t - n} (p_g \otimes 1) z_{t - n}^*$
occupying the two positions $n + 1$ and $n + 2$ in the
tensor product.
It is clear that $e_g^{(t)}$ is a \pj\  and that
$\displaystyle{\sum_{g \in G} e_g^{(t)} = 1}$ for all $t \in [0, \I)$.
Since $(\nu_g \otimes \nu_g) (z_{t - n}) = z_{t - n}$
and $\nu_g (p_h) = p_{g h}$,
one easily checks that
$\mu^{G, m}_g \big( e_h^{(t)} \big) = e_{g h}^{(t)}$
for all $g, h \in G$ and $t \in [0, \I)$.
Finally, if
\[
a \in \bigotimes_{k = 1}^{N} L (l^2 (G)^m)
   \S \bigotimes_{k = 1}^{\infty} L (l^2 (G)^m)
\andeqn
t \geq N,
\]
then $e_g^{(t)}$ exactly commutes with~$a$.

Now take $b \in B_{G, m}$.
For every $\varepsilon > 0$ there
exist $N \geq 1$ and
$\displaystyle{a \in \bigotimes_{k = 1}^{N} L (l^2 (G)^m)}$
such that $\| b - a \| < \frac{\varepsilon}{2}$.
Suppose $t \geq N$.
Then $e_g^{(t)} a = a e_g^{(t)}$ by the previous paragraph,
so
\[
\big\| e_g^{(t)} b - b e_g^{(t)} \big\|
 \leq 2 \| b - a \|
     + \big\| e_g^{(t)} a - a e_g^{(t)} \big\|
 = 2 \| b - a \|
 < \varepsilon.
\]
This completes the proof.
\end{proof}

The following result will not be used,
but it is easy to derive from known results
and provides motivation for the idea that the action
we construct in Section~\ref{Sec_Constr}
should have the continuous Rokhlin property.
The result we actually need
is in Proposition~\ref{P_2_6} below.
It is known that there are actions on simple \ca{s}
which have the Rokhlin property
but not the continuous Rokhlin property.
Giving an example here would take us too far afield.

\begin{proposition}\label{P_2_6X}
Let $G$ be a finite group.
Let $A$ be a simple separable unital nuclear
\ca{} satisfying the Universal Coefficient Theorem
which, in addition,
is either purely infinite or tracially AF
in the sense of~\cite{Ln1}.
Let $\af \colon G \to \Aut (A)$
be an action with the Rokhlin property.
Assume that for all $g \in G$,
the maps $(\af_g)_* \in \Aut (K_* (A))$
are the identity maps.
Then $\af$ has the continuous Rokhlin property.
\end{proposition}

\begin{proof}
Apply Theorem~3.4 of~\cite{Iz2} in the purely infinite case,
and Theorem~3.5 of~\cite{Iz2}
in the tracially AF (tracial rank zero) case,
to show that $\af$ is conjugate to its tensor product
with the action $\mu^G$ of Example~\ref{modelaction}.
Since $\mu^G$ has the continuous Rokhlin property
(by Lemma~\ref{L:PTypeCRP}),
it follows from Lemma~\ref{L_5Y02_CtRkTP}
that $\af$ has the continuous Rokhlin property.
\end{proof}

We now recall the definition of an asymptotic homomorphism.

\begin{definition}\label{AsympMorophism}
Let $A$ and $B$ be \ca{s}.
An asymptotic
homomorphism from $A$ to $B$ is a family of maps
$\psi_t \colon A \to B$, indexed by $t \in [0, \infty)$,
satisfying the following conditions:
\begin{enumerate}
\item
For all $a \in A$ the map $t \mapsto \psi_t (a)$,
from $[0, \infty)$ to $B$,
is continuous.
\item
For all $a, \, b \in A$ and $\lambda \in {\mathbb{C}}$ one has
\[
\lim_{t \to \infty}
  \| \psi_t (a + b) - \psi_t (a) - \psi_t (b) \| = 0,
\]
\[
\lim_{t \to \infty} \| \psi_t (\lambda a) - \lambda \psi_t (a) \| = 0,
\]
\[
\lim_{t \to \infty} \| \psi_t (ab) - \psi_t (a) \psi_t (b) \| = 0,
\]
and
\[
\lim_{t \to \infty} \| \psi_t (a^*) - \psi_t (a)^* \| = 0.
\]
\end{enumerate}
\end{definition}

Next we show that, given a separable unital \ca{}
and an action of a finite group $G$ on $A$ with the
continuous Rokhlin property, there exists a unital
completely positive asymptotic homomorphism
$t \mapsto \psi_t$ from
$A$ to $A^{\af}$
which is a left inverse for the inclusion.
The following argument was
suggested by E.~Gardella.
It replaces an earlier argument in which
$\ps_t$ was not completely positive.

\begin{proposition}\label{complposasymmorp}
Let $A$ be a separable unital \ca.
Let $G$ be a
finite group and let $\af \colon G \to \Aut (A)$ be an action with the
continuous Rokhlin property.
Denote by $A^{\af}$ the fixed point
algebra, and let $\iota \colon A^{\af} \to A$ be the canonical
inclusion.
Then there exists a unital completely positive
asymptotic homomorphism $t \mapsto \psi_t \colon A \to A^{\af}$ for
$t \in [0, \infty)$ such that
\[
\lim_{t \to \infty} \| (\psi_t \circ \io) (a) - a \| = 0
\]
for all $a \in A^{\af}$.
\end{proposition}

\begin{proof}
Given \ca{s} $A$ and $B$ and a map
$\psi \colon A \to B$,
we denote by $\psi^{(n)}$ the map
from $M_n (A)$ to $M_n (B)$ defined by
$\psi^{(n)} (a) = (\psi (a_{j, k}))_{j, k = 1}^n$ for
$a = (a_{j, k})_{j, k = 1}^n \in M_n (A)$.

Let $\big( e_g^{(t)} \big)_{g \in G, \, t \in [0, \infty)}$
be a family
of projections as in Definition~\ref{ContinuousRokhlin}.
For
$t \in [0, \infty)$ define a map $\rh_t \colon A \to A$ by
\[
\rh_t (a) = \sum_{g \in G} e_g^{(t)} \af_g (a) e_g^{(t)}
\]
for $a \in A$.
We claim that $t \mapsto \rh_t$
is a unital completely positive asymptotic homomorphism
from $A$ to~$A$
such that
\[
\lim_{t \to \infty} \| (\rh_t \circ \io) (a) - a \| = 0
\]
for all $a \in A^{\af}$.

Obviously $\rh_t$ is unital.
Moreover, $\rh_t$ is completely positive since if
$a = (a_{j, k})_{j, k = 1}^n \in M_n (A)_{+}$ and
\[
p_g^{(t)}
 = \operatorname{diag}
    \big( e_g^{(t)}, e_g^{(t)}, \ldots, e_g^{(t)} \big )\in M_n(A)
\]
denotes the diagonal matrix with the element $e_g^{(t)}$
everywhere on the diagonal, then
\[
\rh_t^{(n)} (a)
  = \sum_{g \in G} p_g^{(t)} \big( \af_g (a_{j, k}) \big)_{j, k = 1}^n
     p_g^{(t)}
 \geq 0
\]
for every $t \in [0, \infty)$.
To show that $\rh = (\rh_t)_{t \in [0, \infty)}$
is an asymptotic homomorphism, observe that
$t \mapsto \rh_t (a)$ is clearly continuous for every
$a \in A$,
and that for every $a, b \in A$ and $\lambda \in {\mathbb{C}}$
we have $\rh_t (\lambda a + b) = \lambda \rh_t (a) + \rh_t (b)$
and $\rh_t (a^*) = \rh_t (a)^*$.
Moreover,
\begin{equation}\label{Eq_5Y06_AppMult}
\begin{split}
& \lim_{t \to \infty} \| \rh_t (a b) - \rh_t (a) \rh_t (b) \| \\
& \hspace{3em} {\mbox{}}
  = \lim_{t \to \infty} \Biggl\| \sum_{g \in G}
          e_g^{(t)} \af_g (a b) e_g^{(t)}
        - \sum_{g \in G} e_g^{(t)} \af_g (a) e_g^{(t)}
                 \af_g (b) e_g^{(t)} \Biggr\|
\\
& \hspace{3em} {\mbox{}}
  \leq \lim_{t \to \infty} \sum_{g \in G} \| \af_g (a) \|
      \big\| \af_g (b) e_g^{(t)} - e_g^{(t)} \af_g (b) \big\|
  = 0.
\end{split}
\end{equation}
Lastly, for every $a \in A^{\af}$ we have
\begin{equation}\label{Eq_5Y06_LeftInv}
\begin{split}
\lim_{t \to \infty} \| \rh_t (a) - a \|
& = \lim_{t \to \infty} \Biggl\| \sum_{g \in G} e_g^{(t)} a e_g^{(t)}
      - \sum_{g \in G} e_g^{(t)} a \Biggr\|
\\
& \leq \lim_{t \to \infty}
       \sum_{g \in G} \big\| a e_g^{(t)} - e_g^{(t)} a \big\|
  = 0.
\end{split}
\end{equation}
The claim is proved.

It follows from the definition of $\rh_t$
and the relation~(\ref{shift}) in Definition~\ref{ContinuousRokhlin}
that
\begin{equation}\label{Eq_5Y06_AppInv}
\lim_{t \to \infty} \| \af_g (\rh_t (a)) - \rh_t (a) \| = 0
\end{equation}
for all $a \in A$ and $g \in G$.

Now let $E \colon A \to A^{\af}$ be the conditional expectation
given by
\[
E (a) = \frac{1}{\card (G)} \sum_{g \in G} \af_g (a)
\]
for $a \in A$.
For
$t \in [0, \infty)$ define a map $\ps_t \colon A \to A^{\af}$ by
$\ps_t (a) = E (\rh_t (a))$ for $a \in A$.
Obviously $\ps_t (a) \in A^{\af}$ for all $a \in A$,
and $\ps_t$ is unital, linear, and completely positive.
Also, $t \mapsto \ps_t (a)$ is clearly continuous for every
$a \in A$.
Since $G$ is finite,
it follows from~(\ref{Eq_5Y06_AppInv})
that
${\displaystyle{\lim_{t \to \infty} \| \rh_t (a) - \ps_t (a) \| = 0}}$
for all $a \in A$.
Combining this relation with~(\ref{Eq_5Y06_AppMult})
gives
${\displaystyle{\lim_{t \to \infty}
   \| \ps_t (a b) - \ps_t (a) \ps_t (b) \| = 0}}$
for all $a, b \in A$.
Combining it with~(\ref{Eq_5Y06_LeftInv})
gives
${\displaystyle{\lim_{t \to \infty} \| \ps_t (a) - a \| = 0}}$
for all $a \in A^{\af}$.
This completes the proof.
\end{proof}

\begin{proposition}\label{UCTcrossedproduct}
Let $A$ be a separable unital \ca,
and let $G$ be a finite group.
Let
$\af \colon G \to \Aut (A)$ be an action with
the continuous Rokhlin property.
Assume that $A$ satisfies the Universal Coefficient Theorem.
Then $A^{\af}$ and $A \rtimes_{\af} G$
satisfy the Universal Coefficient Theorem.
\end{proposition}

If $A$ is simple and nuclear,
then one does not need the continuous Rokhlin property;
the Rokhlin property suffices.
See Corollary~3.9 of~\cite{OsaPhil} for the crossed product
and, for actions of second countable compact groups,
see Theorem~3.13 of~\cite{Gd}.
Moreover,
Gardella, Hirshberg, and Santiago
proved (Theorem 4.17 in \cite{GHS}) that if $A$ is nuclear
and satisfies the Universal Coefficient Theorem,
$G$ is a second countable compact group
with finite covering dimension,
and $\alpha \colon G \to \Aut (A)$ has finite Rokhlin dimension with
commuting towers, then $A^{\alpha}$ and $A \rtimes_{\alpha} G$
also satisfy the Universal Coefficient Theorem.

\begin{proof}[Proof of Proposition~\ref{UCTcrossedproduct}]
Denote the suspension of a \ca{} $A$ by $SA$.
Let $K$ be the algebra of compact operators.
By Theorem~4.2 in~\cite{HLT}, for every pair of separable
\ca{s} $A$ and $B$, the group $KK (A, B)$ is
canonically isomorphic to the group of homotopy classes of
completely positive asymptotic homomorphisms from
$K \otimes S A$ to $K \otimes S B$.
Let $\io \colon A^{\af} \to A$ be the inclusion.
Proposition~\ref{complposasymmorp} implies that the
group homomorphism $\psi^* \colon KK (A^{\af}, B) \to KK (A, B)$
induced by the unital completely positive asymptotic
homomorphism $(\psi_t)_{t \in [0, \infty)}$ obtained there
satisfies
$\iota^* \circ \psi^* = \operatorname{id}_{KK (A^{\af}, B)}$.
In particular, $\psi^*$ is naturally split injective with left
inverse $\iota^* \colon KK (A, B) \to KK (A^{\af}, B)$.

By hypothesis, $A$ satisfies the Universal Coefficient Theorem
(Theorem~1.17 of~\cite{RS}).
That is, let $B$ be any separable \ca.
Let
\[
\gm_{A, B} \colon KK (A, B) \to \operatorname{Hom} (K_* (A), \, K_* (B))
\]
and
\[
\kp_{A, B} \colon \operatorname{Ker} (\gamma_{A, B}) \to
\operatorname{Ext} (K_* (A), \, K_{*+1} (B) )
\]
be as described before Theorem~1.17 of~\cite{RS}
(and called $\gm (A, B)$ and $\kp (A, B)$ in~\cite{RS}
when $A$ and $B$ must be specified).
Then
$\dt_{A, B} = \kp_{A, B}^{-1}$ exists, and
there is a (natural) short exact sequence
\[
\begin{split}
0 \longrightarrow \operatorname{Ext} (K_* (A), \, K_{*+1} (B))
& \stackrel{\delta_{A, B}}{\longrightarrow} KK_* (A, B)
\\
& \stackrel{\gamma_{A, B}}{\longrightarrow}
    \operatorname{Hom} (K_* (A), \, K_* (B))
\longrightarrow 0.
\end{split}
\]
(Naturality is Theorem~4.4 of~\cite{RS}.)

Consider the commutative diagram
\[
\begin{tikzcd}
KK (A, B) \arrow{d}{\iota^*}
 \arrow{r}{\gamma_{A, B}} & \arrow{d}{\iota^*}
       \operatorname{Hom} (K_* (A), \, K_* (B))
\\
KK (A^{\af}, B) \arrow{r}{\gamma_{A^{\af}, B}} &
\operatorname{Hom} (K_* (A^{\af}), \, K_* (B)),
\end{tikzcd}
\]
in which the vertical maps are induced by
$\io \colon A^{\af} \to A$ and the horizontal ones
are from the Universal Coefficient Theorem.
The map $\gamma_{A, B}$ is surjective because $A$ satisfies
the Universal Coefficient Theorem,
and the right vertical map is surjective because it is a left inverse
of $\ps^*$,
so $\gamma_{A^{\af}, B}$ is surjective.

Now consider the following commutative diagram,
in which the horizontal maps are from the Universal Coefficient Theorem:
\[
\begin{tikzcd}
\operatorname{Ker} (\gamma_{A, B})
\arrow{r}{\kp_{A, B}}
  & \operatorname{Ext} (K_* (A), \, K_{*+1} (B) )  \\
\operatorname{Ker} (\gamma_{A^{\af}, B}) \arrow{u}{\psi^*}
\arrow{r}{\kp_{A^{\af}, B}} &  \arrow{u}{\psi^*}
\operatorname{Ext} (K_* (A^{\af}), \, K_{*+1} (B) ).
\end{tikzcd}
\]
The map $\kp_{A, B}$ is injective because $A$ satisfies
the Universal Coefficient Theorem,
and the left vertical map is injective
since it has a left inverse~$\io^*$,
so $\kp_{A^{\af}, B}$ is injective.

Lastly, the argument used to prove that $\gamma_{A^{\af}, B}$
is surjective, applied to the commutative diagram
\[
\begin{tikzcd}
\operatorname{Ker} (\gamma_{A, B}) \arrow{d}{\iota^*}
\arrow{r}{\kp_{A, B}} & \operatorname{Ext} (K_* (A), \, K_{*+1} (B) )
\arrow{d}{\iota^*} \\
\operatorname{Ker} (\gamma_{A^{\af}, B})
\arrow{r}{\kp_{A^{\af}, B}} &
\operatorname{Ext} (K_* (A^{\af}), \, K_{*+1} (B) ),
\end{tikzcd}
\]
shows that $\kp_{A^{\af}, B}$ is
surjective.
Therefore $A^{\af}$ satisfies the Universal
Coefficient Theorem.

Now we consider the crossed product $A \rtimes_{\af} G$.
By the Proposition in~\cite{Rs},
$A^{\af}$ is isomorphic to a corner of $A \rtimes_{\af} G$.
When $\af$ has the Rokhlin property,
Corollary 2.15 of~\cite{Gd}
implies that
this corner is strongly Morita equivalent to $A \rtimes_{\af} G$.
(This is saturation of the action,
a weaker condition than hereditary saturation
as proved in~\cite{Gd}.)
Since strong Morita equivalence
preserves the class of algebras satisfying
the Universal Coefficient Theorem,
we conclude that $A \rtimes_{\af} G$
satisfies the Universal Coefficient Theorem.
\end{proof}

The following argument was suggested by E.~Gardella.

\begin{proposition}\label{P_2_6}
Let $G$  be a finite group.
Let $A$ be a unital
separable \ca{} which absorbs the UHF
algebra of type $\card (G)^{\infty}$.
Suppose the action
$\alpha \colon G \to \operatorname{Aut} (A)$
has the Rokhlin property.
Then:
\begin{enumerate}
\item\label{P_2_6_Stab}
Taking $\mu^G$ to be the product type action of
Example \ref{modelaction}, we have an equivariant isomorphism
\[
(G, A, \alpha)
 \cong \big( G, \, M_{\card (G)^{\infty}} \otimes A, \,
            \mu^G \otimes \alpha \big).
\]
\item\label{P_2_6_CRP}
The action $\alpha$ has the continuous Rokhlin property.
\end{enumerate}
\end{proposition}

\begin{proof}
We prove~(\ref{P_2_6_Stab}).
We will need to cite theorems which use central sequence algebras,
so we state notation for them.
For a separable unital \ca~$A$,
we define $A^{\I} = C_{\mathrm{b}} (\N, A) / C_0 (\N, A)$,
and we regard $A$ as a subalgebra of $A^{\I}$ via
its embedding in $C_{\mathrm{b}} (\N, A)$
as the algebra of constant sequences.
Then $A' \cap A^{\I}$ is the relative commutant of this image of~$A$.
(It is written $A_{\I}$ in~\cite{MI}.
See Section~2.1 there.)
For $\om \in \bt \N \setminus \N$,
if in place of $C_0 (\N, A)$ we use
\[
\left\{ a = (a_n)_{n \in \N} \in C_{\mathrm{b}} (\N, A) \colon
   \lim_{n \to \om} a_n = 0 \right\},
\]
we call the quotient~$A^{\om}$.
The image of the constant sequences here
can also be identified with~$A$,
and we again get a relative commutant $A' \cap A^{\om}$.
There is an obvious surjective map $A^{\I} \to A^{\om}$,
which gives a unital \hm{} $A' \cap A^{\I} \to A' \cap A^{\om}$.

We now fix any $\om \in \bt \N \setminus \N$.

Since $A$ absorbs $M_{\card (G)^{\infty}}$,
Proposition~2.8  in~\cite{MI}
and the comment at the beginning of the proof of
Proposition 2.9 in~\cite{MI}
provide an injective unital \hm{}
$M_{\card (G)^{\infty}} \to A^{\infty} \cap A'$.
Since $M_{\card (G)^{\infty}}$ is simple,
it follows from the previous paragraph
that there is an injective unital \hm{}
$M_{\card (G)^{\infty}} \to A^{\om} \cap A'$.
Lemma 3.12  in~\cite{KirPhi},
the proof of
Proposition 3.13 in~\cite{KirPhi},
and the fact that in Lemma 0.5 in~\cite{KirPhi}
the isomorphism is approximately unitarily equivalent to
the given \hm{}
(see the proof of Proposition~A in~\cite{Rr3}),
now provide a unital isomorphism
$\varphi \colon M_{\card (G)^{\infty}} \otimes A \to A$
and unitaries $w_n$ in $A$ for $n \in \N$ such that
\begin{equation}\label{Eq_5Y06_Lim0}
\lim_{n \to \infty} \| w_n \varphi (1 \otimes a) w_n^* - a \| = 0
\end{equation}
for every $a \in A$.

Define an action
$\beta \colon G \to \operatorname{Aut} (A)$ by
$\beta_g
 = \varphi \circ ( \mu_g^G \otimes \alpha_g ) \circ \varphi^{-1}$
for $g \in G$.
We claim that $\beta_g$ is approximately unitarily
equivalent to $\alpha_g$ for every $g \in G$.
Set $v_n = w_n \beta_g (w_n^*)$ for $n \in \N$.
Let $g \in G$ and $a \in A$.
For $n \in \N$ we have
\begin{align*}
\| v_n \beta_g (a) v_n^* - \alpha_g (a) \|
&  = \| w_n \beta_g (w_n^* a w_n) w_n^* - \alpha_g (a) \|
\\
& \leq \| w_n \beta_g (w_n^* a w_n) w_n^*
         - w_n \beta_g (\varphi (1 \otimes a)) w_n^* \|
\\
& \hspace{2em} {\mbox{}}
 + \| w_n \beta_g (\varphi (1 \otimes a)) w_n^* - \alpha_g (a) \|
\\
& = \| w_n^* a w_n - \varphi (1 \otimes a ) \|
\\
& \hspace{2em} {\mbox{}}
 + \| w_n \varphi (1 \otimes \alpha_g (a)) w_n^* - \alpha_g (a) \|.
\end{align*}

Applying~(\ref{Eq_5Y06_Lim0}) to $a$ and $\af_g (a)$,
we find that
$\displaystyle{
 \lim_{n \to \infty} \| v_n \beta_g (a) v_n^* - \alpha_g (a) \| = 0}$.
This proves the claim.

By Theorem 3.5 in~\cite{MI},
there exists an approximately inner
automorphism $\theta$ such that
$\theta \circ \alpha_g \circ \theta^{-1} = \beta_g$ for every
$g \in G$.
Part~(\ref{P_2_6_Stab}) follows.

Part~(\ref{P_2_6_CRP})
is now immediate from Lemma~\ref{L:PTypeCRP}
and Lemma~\ref{L_5Y02_CtRkTP}.
\end{proof}

\section{The construction}\label{Sec_Constr}

In this section we describe a method to construct simple
separable \ca{s} not isomorphic to their opposite
algebras.
We also show that these \ca{s} satisfy
the Universal Coefficient Theorem.
Throughout this section $q$ is a fixed integer with $q \geq 2$.
The construction is a generalization
of the construction of~\cite{PhV} for $q = 3$.
In Section~\ref{Sec_Main},
we will restrict $q$ to being
an odd prime such that $- 1$
is not a square mod~$q$.

\begin{definition}\label{D:2.1}
Let $q \in \{ 2, 3, \ldots \}$.
Define the \ca~$A_q$ to be the reduced free product
of $q$~copies of $C ([0, 1])$ and the \ca~${\mathbb{C}}^q$,
amalgamated over $\mathbb{C}$, taken with respect to
the states given by Lebesgue measure $\mu$ on each
copy of $C ([0, 1])$ and the state given by
\[
\om (c_1, c_2, \ldots, c_q)
 = \frac{1}{q} \big( c_1 + c_2 + \cdots + c_q \big)
\]
on $\mathbb{C}^q$.
That is,
\[
A_q = C ([0, 1]) \star_{\mathrm{r}} C ([0, 1]) \star_{\mathrm{r}}
 \cdots \star_{\mathrm{r}} C ([0, 1])
 \star_{\mathrm{r}} {\mathbb{C}}^q.
\]

For $k = 1, 2, \ldots, q$ we denote by
$\ep_k \colon C ([0, 1]) \to A_q$
the inclusion of the $k$-th copy of $C ([0, 1])$ in~$A_q$.
Set
\begin{equation}\label{equ2.1}
v = \big( e^{2 \pi i/q}, \, 1, \, e^{2 (q - 1) \pi i/q},
       \, e^{2 (q - 2) \pi i/q}, \, \ldots, \, e^{4 \pi i/q} \big)
 \in {\mathbb{C}}^q,
\end{equation}
and regard $v$ as a unitary in $A_q$ via the obvious
inclusion.
\end{definition}

\begin{lemma}\label{L_3702_ExAf}
There exists a unique automorphism
$\alpha \in \Aut (A_q)$
such that for all $f \in C ([0, 1])$ we have
\begin{equation}\label{Eq:Af1}
\alpha (\ep_{1} (f)) = \ep_{2} (f),
\quad
\alpha (\ep_{2} (f)) = \ep_{3} (f),
\quad
\ldots,
\quad
\alpha (\ep_{q - 1} (f)) = \ep_{q} (f),
\end{equation}
\begin{equation}\label{Eq:Af2}
\alpha (\ep_{q} (f)) = \Ad (v) (\ep_{1} (f)),
\end{equation}
and
\begin{equation}\label{Eq:Af3}
\alpha (v) = e^{ - 2 \pi i / q} v.
\end{equation}
Moreover, with $v$ as in~(\ref{equ2.1}), we have $\alpha^q = \Ad (v)$.
\end{lemma}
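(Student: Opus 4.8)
The plan is to build $\alpha$ from the universal property of the reduced free product with respect to a faithful trace, and then to read off $\alpha^q = \Ad(u_0)$ on generators. Write $\zeta = e^{2\pi i/q}$, let $\tau = \mu \star \cdots \star \mu \star \om$ be the canonical free product state on $A_q$, and observe that $\tau$ is a \emph{trace}, since it is the reduced free product of the commutative (hence tracial) states $\mu$ and $\om$, and that $\tau$ is faithful because $\mu$ and $\om$ are. First I would define unital $*$-homomorphisms into $A_q$ on each free factor: $f \mapsto \ep_{k+1}(f)$ on the $k$-th copy of $C([0,1])$ for $1 \le k \le q-1$, $f \mapsto \Ad(u_0)(\ep_1(f))$ on the $q$-th copy, and $u_0 \mapsto \zeta^{-1} u_0$ on $\C^q$. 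The last of these is well defined and $\om$-preserving because the entries of $u_0$ run over all $q$-th roots of unity, so $\zeta^{-1} u_0$ again generates $\C^q$ and $u_0 \mapsto \zeta^{-1}u_0$ is a coordinate permutation, under which the uniform state $\om$ is invariant; the first two maps preserve $\mu$, using the trace property of $\tau$ for the conjugation by $u_0$. The universal property of the reduced free product for free families then yields a unique unital $*$-homomorphism $\alpha \colon A_q \to A_q$ with $\tau \circ \alpha = \tau$ realizing (\ref{Eq:Af1}), (\ref{Eq:Af2}), and (\ref{Eq:Af3}), \emph{provided} the images of the free factors again form a free family.

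The main point is therefore to check this freeness. The images are $\C^q$, the factors $\ep_2(C([0,1])), \ldots, \ep_q(C([0,1]))$, and the single conjugated factor $u_0 \ep_1(C([0,1])) u_0^{*}$; thus the only change from the original free family is that $\ep_1(C([0,1]))$ has been replaced by its conjugate by the unitary $u_0 \in \C^q$. I would prove the general fact that, in a tracial setting, conjugating one member of a free family by a unitary lying in another member preserves freeness. By associativity of freeness (group $\C^q, \ep_2(C([0,1])), \ldots, \ep_q(C([0,1]))$ into one subalgebra $P$, which contains $u_0$ and is free from $\ep_1(C([0,1]))$), this reduces to the two-algebra statement: if $N, N'$ are free and $u \in N$ is unitary, then $N$ and $uN'u^{*}$ are free. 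For this I would take an alternating product of $\tau$-centered elements of $N$ and $uN'u^{*}$, write each $uN'u^{*}$-letter as $u b u^{*}$ with $b \in N'$ centered, and use the trace property to move the $u$'s onto the adjacent $N$-letters: the interior factors $u^{*}cu$ stay centered in $N$, while the cyclic wrap-around produces a single possibly non-centered $N$-letter $e$; its scalar part contributes $\tau(e)$ times a reduced alternating word (which vanishes) and its centered part yields another reduced word (which also vanishes). This centering computation is the one genuinely technical step, and is where the trace property is essential.

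Granting the freeness, $\alpha$ exists and satisfies $\tau \circ \alpha = \tau$. It is injective because $\tau$ is a faithful trace: $\alpha(x) = 0$ forces $\tau(x^{*}x) = \tau(\alpha(x)^{*}\alpha(x)) = 0$. It is surjective because its image is a C*-subalgebra containing $\ep_2(C([0,1])), \ldots, \ep_q(C([0,1]))$ and $\C^q$, hence $u_0$, hence also $\ep_1(C([0,1])) = u_0^{*}\big(u_0 \ep_1(C([0,1])) u_0^{*}\big)u_0$; so the image contains all generators and equals $A_q$. Thus $\alpha \in \Aut(A_q)$, and it is unique since (\ref{Eq:Af1})--(\ref{Eq:Af3}) determine it on a generating set. (Alternatively one constructs $\alpha^{-1}$ by the same functoriality applied to the inverse data $\ep_{k+1}(f) \mapsto \ep_k(f)$, $\ep_1(f) \mapsto \Ad(u_0^{*})(\ep_q(f))$, and $u_0 \mapsto \zeta u_0$.)

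Finally, for $\alpha^q = \Ad(u_0)$ it suffices to check agreement on generators, since both sides are automorphisms. Iterating (\ref{Eq:Af1}) and (\ref{Eq:Af2}) carries $\ep_1(f)$ along the chain $\ep_1(f) \mapsto \ep_2(f) \mapsto \cdots \mapsto \ep_q(f) \mapsto \Ad(u_0)(\ep_1(f))$, giving $\alpha^q(\ep_1(f)) = \Ad(u_0)(\ep_1(f))$; applying $\alpha$ to this chain, and using (\ref{Eq:Af3}) to see that the scalar $\zeta^{-1}$ from $\alpha(u_0)$ cancels against its conjugate in the conjugation, gives $\alpha^q(\ep_k(f)) = \Ad(u_0)(\ep_k(f))$ for every $k$. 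On $\C^q$, (\ref{Eq:Af3}) gives $\alpha^q(u_0) = \zeta^{-q}u_0 = u_0 = \Ad(u_0)(u_0)$. Hence $\alpha^q$ and $\Ad(u_0)$ coincide on all generators, so $\alpha^q = \Ad(u_0)$.
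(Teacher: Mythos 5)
Your proof is correct and follows essentially the same route as the paper, whose own proof simply defers to Lemma~4.6 of~\cite{PhV}: construct $\alpha$ via the universal property of the reduced free product, the key point being that freeness with respect to the trace is preserved when one free factor is conjugated by a unitary lying in the subalgebra generated by the others, and then check $\alpha^q = \Ad (u_0)$ on generators. The freeness argument (including the cyclic-permutation trick for the wrap-around letter), the scalar cancellation giving $\alpha^q (\ep_k (f)) = \Ad (u_0) (\ep_k (f))$ for every $k$, and the injectivity/surjectivity arguments are all handled correctly.
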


\begin{proof}
The proof is the same as that of Lemma~4.6 of~\cite{PhV}.
\end{proof}

\begin{remark}\label{R:2.2}
The \ca{} $A_q$ is unital, separable, simple, exact,
and has a unique tracial state.
Exactness follows from Theorem~3.2 of~\cite{Dykema}.
Simplicity and uniqueness of the tracial state follow
by applying the corollary on page 431 of~\cite{Av} several
times.
Lastly, simplicity and the existence of a faithful
tracial state imply that $A_q$ is stably finite.
\end{remark}

We also need an action on a UHF algebra.
We use a different model than in~\cite{PhV},
which has the advantage that the computation of the Connes invariant
is more explicit.
We start with some notation
and a preliminary lemma which we isolate from the main argument
for convenience.

\begin{notation}\label{D:2.3}
Let $d \in \N$.
Define $\varphi_n \colon M_{d^n} \to M_{d^{n + 1}}$
by $\varphi_n (x) = \operatorname{diag} (x, x, \ldots, x)$
for $x \in M_{d^n}$.
Denote by $B_d$ the UHF algebra obtained as the direct limit
of the system $(M_{d^n}, \varphi_n)_{n \in \N}$.
We identify $M_{d^n}$ with $\displaystyle{\bigotimes_{k = 1}^n M_d}$
and $B_d$ with $\displaystyle{\bigotimes_{k = 1}^{\infty} M_d}$.
\end{notation}

\begin{lem}\label{L_2808_Sim}
Let $n \in \N$, let $(e_{j, k})_{j, k = 1}^{n}$
be the standard system of matrix units in $M_{n}$,
and let $s \in M_n$ be the shift unitary
$\displaystyle{s = e_{1, n} + \sum_{j = 2}^{n} e_{j, j - 1}}$.
Let $v \in M_n$ be a diagonal unitary.
Then there are $\gm \in {\mathbb{T}}$ and a diagonal unitary $y \in M_n$
such that $y v s y^* = \gm s$.
\end{lem}

\begin{proof}
Write $v = \diag \bigl( \bt_1, \bt_2, \ldots, \bt_n \bigr)$
with $\bt_1, \bt_2, \ldots, \bt_n \in {\mathbb{T}}$.
Choose $\gm \in {\mathbb{T}}$ such that
$\gm^n = {\displaystyle{ \prod_{j = 1}^n \bt_j }}$.
Define $\ld_j = {\overline{\gm}} \bt_j$ for $j = 1, 2, \ldots, n$.
Then $w = \diag \bigl( \ld_1, \ld_2, \ldots, \ld_n \bigr)$
satisfies $w = {\overline{\gm}} v$.
Moreover,
\begin{equation}\label{Eq_2812_Prd}
\prod_{j = 1}^n \ld_j = 1.
\end{equation}
Define
\[
\zt_1 = {\overline{\ld_1}},
\qquad
\zt_2 = {\overline{\ld_1 \ld_2}},
\qquad
\ldots
\qquad
\zt_{n - 1} = {\overline{\ld_1 \ld_2 \cdots \ld_{n - 1}}},
\qquad
\zt_n = 1,
\]
and $y = \diag \bigl( \zt_1, \zt_2, \ldots, \zt_n \bigr)$.
Using (\ref{Eq_2812_Prd}), one checks that $y s y^* s^* = w^*$.
So $y s y^* s^* = \gm v^*$.
Since $y$ commutes with $v$, this gives $y v s y^* = \gm s$.
\end{proof}


In the next lemma, $B_{q^2}$ and $B_q$ are of course isomorphic.
But we find it convenient to notationally distinguish them:
$B_{q^2}$ is the algebra in which we carry out the construction,
and $B_q$ is the subalgebra in~(\ref{I_modelBq}).

\begin{lemma}\label{UHFmodel}
Let $q \in \N$ satisfy $q \geq 2$.
There exist unitaries $g \in B_{q^2}$ and
$u_k \in B_{q^2}$ for $k = - 1, 0, 1, 2, \ldots$
such that, taking $E_{ - 1} = C^* ( g )$ and
$E_{n} = C^* ( g, u_0, u_1, \ldots, u_n )$ for $n \in \Nz$,
and using the unique \tst{} on~$B_{q^2}$,
the following hold:
\begin{enumerate}
\item\label{I_modelB_uk}
For $k = - 1, 0, 1, 2, \ldots$,
$\spec (u_k) = \{ \zt \in {\mathbb{T}} \colon \zt^{q^2} = 1 \}$,
and for every $\zt \in \spec (u_k)$,
the corresponding spectral \pj{} of $u_k$ has trace $1 / q^2$.
\item\label{I_modelB_g}
$\spec (g) = \{ \zt \in {\mathbb{T}} \colon \zt^{q} = 1 \}$,
and for every $\zt \in \spec (g)$,
the corresponding spectral \pj{} of $g$ has trace $1 / q$.
\item\label{I_2811_vkg_sm}
$u_k g u_k^* = e^{2 \pi i / q} g$ for $k \in \{ - 1, 0 \}$.
\item\label{I_2811_vkg_lg}
$u_k g u_k^* = g$ for $k = 1, 2, \ldots$.
\item\label{I_modelB_vk_vkp1}
$u_k u_{k + 1} u_k^* = e^{ - 2 \pi i / q^2} u_{k + 1}$
for $k = - 1, 0, 1, 2, \ldots$.
\item\label{I_modelB_ujvk}
$u_j u_k = u_k u_j$ for $j, k = - 1, 0, 1, 2, \ldots$
with $| j - k | \geq 2$.
\item\label{l_modelblock}
$E_n \cong (M_{q^{n + 1}})^q$ for $n = - 1, 0, 1, 2, \ldots$.
\item\label{I_2811_Center}
If $n \in \{- 1, 0, 1, 2, \ldots\}$ is odd,
then the center $Z (E_n)$ is generated by the order~$q$ unitary
$w_{n} = g^* (u_1^*)^q (u_3^*)^q \cdots (u_n^*)^q$
(so $w_{- 1} = g^*$),
and if $n$ is even then $Z (E_n)$ is generated by the order~$q$ unitary
$w_{n} = u_0^q u_2^q u_4^q \cdots u_n^q$.
\item\label{I_modelBq}
$C^* ( g, u_0, u_1, u_2, \ldots) \cong B_q$.

\setcounter{TmpEnumi}{\value{enumi}}
\end{enumerate}
\end{lemma}

\begin{proof}
For $k = 1, 2, \ldots$ let $\sigma_k \colon M_{q^2} \to B_{q^2}$ be the map
\[
\sigma_k (x) = 1 \otimes \cdots \otimes 1 \otimes x
 \otimes 1 \otimes 1 \otimes \cdots,
\]
with $x$ in position~$k$.
(Thus, $\sigma_1 (x) = x \otimes 1 \otimes 1 \otimes \cdots$.)
Let $\rho \colon B_{q^2} \to B_{q^2}$
denote the shift endomorphism of~$B_{q^2}$,
determined by $\rho (\sigma_k (x)) = \sigma_{k + 1} (x)$
for $k \in \N$ and $x \in M_{q^2}$.

Denote by $(e_{j, k})_{j, k = 1}^{q^2}$
the standard system of matrix units in $M_{q^2}$.
Define unitaries $z, s \in M_{q^2}$ by

\[
z = \diag \bigl( 1, \, e^{2 \pi i/q^2},
        \, \ldots, \, e^{2 (q^2 - 1) \pi i/q^2} \bigr)
\andeqn
s = e_{1, q^2} + \sum_{j = 2}^{q^2} e_{j, j - 1}.
\]
Apply Lemma~\ref{L_2808_Sim} with $n = q^2$, with $s$ as given,
and with $v = z^*$, getting $\gm \in {\mathbb{T}}$.
Define unitaries in $B_{q^2}$ by
\[
g = (\gm z^* s)^q \otimes 1 \otimes 1 \otimes \cdots,
\]
and, for $k = 0, 1, \ldots$, 
\[
u_{2 k - 1} = \sigma_{k + 1} (s)
\andeqn
u_{2 k} = \sigma_{k + 1} (z) \sigma_{k + 2} (z^*).
\]
Thus, for example,
\[
u_{- 1} = s \otimes 1 \otimes 1 \otimes \cdots
\andeqn
u_0 = z \otimes z^* \otimes 1 \otimes 1 \otimes \cdots.
\]
Since $s$ is unitarily equivalent to~$z$,
the choice of $\gm$ using Lemma~\ref{L_2808_Sim} implies
that $(\gm z^* s)^q$ is unitarily equivalent to~$z^q$.
So~(\ref{I_modelB_g}) holds if one uses the normalized trace
on $M_{q^2}$.
Therefore~(\ref{I_modelB_g}) holds using the tracial state on~$B_{q^2}$.
Similar reasoning shows that if $k$ is odd, then~(\ref{I_modelB_uk}) holds.
One readily checks that
$z \otimes z^* \in M_{q^4}$ satisfies~(\ref{I_modelB_uk})
if one uses the normalized trace on $M_{q^4}$,
so~(\ref{I_modelB_uk}) for even $k$ follows in the same way.

A direct check shows that $s z s^* = e^{- 2 \pi i/q^2} z$.
Using this, it is easy to verify the relations
(\ref{I_2811_vkg_sm}), (\ref{I_2811_vkg_lg}),
(\ref{I_modelB_vk_vkp1}), and (\ref{I_modelB_ujvk}) of the statement.

We claim that for $n = -1, 0, 1, \ldots$, we have
\begin{equation}\label{Eq_2812_En_Span}
\begin{split}
E_n & = \spn \bigl( \bigl\{ g^k u_0^{l_0} u_1^{l_1} \cdots u_n^{l_n} \colon
  \\
& \hspace*{3em} {\mbox{}}
{\mbox{$0 \leq k \leq q - 1$ and $0 \leq l_0, l_1, \ldots, l_n \leq q^2 - 1$}}
   \bigr\} \bigr).
\end{split}
\end{equation}

The claim follows from $g^q = 1$ and $u_n^{q^2} = 1$
(by (\ref{I_modelB_g}) and (\ref{I_modelB_uk})), and because,
given this, (\ref{I_2811_vkg_sm}), (\ref{I_2811_vkg_lg}),
(\ref{I_modelB_vk_vkp1}), and (\ref{I_modelB_ujvk}) show that
the product of any two of the elements listed in~(\ref{Eq_2812_En_Span})
is a scalar multiple of another of them.

Let $m \in \{ -1, 0, 1, \ldots \}$,
let $n \in \Nz$, and let $k, l_0, l_1, \ldots, l_m \in \Z$.
We compute $u_n ( g^k u_0^{l_0} u_1^{l_1} \cdots u_m^{l_m} ) u_n^*$
using (\ref{I_2811_vkg_sm}), (\ref{I_2811_vkg_lg}),
(\ref{I_modelB_vk_vkp1}), and (\ref{I_modelB_ujvk}).
To this end observe that, by (\ref{I_modelB_vk_vkp1}),
we have $u_k u_{k - 1} u_k^* = e^{2 \pi i / q^2} u_{k - 1}$
for all $k \in \Nz$.
For $m \geq 1$, we then get
\begin{equation}\label{Eq_2812_c_un_gum}
\begin{split}
& u_n ( g^k u_0^{l_0} u_1^{l_1} \cdots u_m^{l_m} ) u_n^*
\\
& \hspace*{1em} {\mbox{}}
 = \begin{cases}
   e^{2 \pi i [k q - l_1] / q^2}
       \cdot g^k u_0^{l_0} u_1^{l_1} \cdots u_m^{l_m}
    & \hspace*{1em} n = 0
        \\
  e^{2 \pi i [l_{n - 1} - l_{n + 1}] / q^2}
      \cdot g^k u_0^{l_0} u_1^{l_1} \cdots u_m^{l_m}
    & \hspace*{1em} n = 1, 2, \ldots, m - 1
        \\
   e^{2 \pi i l_{n - 1} / q^2}
     \cdot g^k u_0^{l_0} u_1^{l_1} \cdots u_m^{l_m}
    & \hspace*{1em} n = m, m + 1
        \\
   g^k u_0^{l_0} u_1^{l_1} \cdots u_m^{l_m}
    & \hspace*{1em} n = m + 2, m + 3, \ldots.
\end{cases}
\end{split}
\end{equation}
The case $m = - 1$ is
\begin{equation}\label{Eq_2812_c_un_g}
u_n g^k u_n^*
  = \begin{cases}
   e^{2 \pi i k / q} g^k & \hspace*{1em} n = 0
        \\
   g^k & \hspace*{1em} n = 1, 2, \ldots,
\end{cases}
\end{equation}
and the case $m = 0$ is
\begin{equation}\label{Eq_2812_c_u_n_gu0}
u_n ( g^k u_0^{l_0}) u_n^*
 = \begin{cases}
   e^{2 \pi i [k q - l_1] / q^2} \cdot g^k u_0^{l_0} & \hspace*{1em} n = 0
        \\
    e^{2 \pi i l_{0} / q^2}
      \cdot g^k u_0^{l_0} & \hspace*{1em} n = 1
       \\
   g^k u_0^{l_0} & \hspace*{1em} n = 2, 3, \ldots.
\end{cases}
\end{equation}
Also,
\begin{equation}\label{Eq_2812_c_ggum}
g ( g^k u_0^{l_0} u_1^{l_1} \cdots u_m^{l_m} ) g^*
 = e^{- 2 \pi i l_0 / q} \cdot g^k u_0^{l_0} u_1^{l_1} \cdots u_m^{l_m}.
\end{equation}

For $n = -1, 0, 1, \ldots$, let $w_m$ be as in~(\ref{I_2811_Center}).
Then $w_m$ is a unitary in $E_m$.
By (\ref{I_2811_vkg_lg}) and (\ref{I_modelB_ujvk}),
for both even and odd~$m$, the factors in the definition of $w_m$ commute.
Therefore $w_m^q = 1$ by (\ref{I_modelB_uk}) and~(\ref{I_modelB_g}).
The relations (\ref{Eq_2812_c_un_gum}), (\ref{Eq_2812_c_un_g}),
(\ref{Eq_2812_c_u_n_gu0}), and~(\ref{Eq_2812_c_ggum})
show that $w_m$ commutes with all the generators of $E_m$.
Therefore $w_m$ is in the center $Z (E_m)$.
These relations also show that
\begin{equation}\label{Eq_2812_Cj_unp1}
u_{m + 1} w_m u_{m + 1}^*
 = \begin{cases}
   e^{- 2 \pi i/q} w_m & \hspace*{1em} {\mbox{$m$ is odd}}
       \\
   e^{2 \pi i/q} w_m & \hspace*{1em} {\mbox{$m$ is even}}.
\end{cases}
\end{equation}
Combined with $w_m^q = 1$, this shows that
$\spec (w_m) = \{ \zt \in {\mathbb{T}} \colon \zt^{q} = 1 \}$.
For $m \in \{ -1, 0, 1, \ldots \}$ and $j \in \{ 0, 1, \ldots, q - 1 \}$,
we now let $p_{m, j}$ be the spectral \pj{} of $w_m$ corresponding
to the eigenvalue $e^{2 \pi i j/q}$.
For fixed $m$, the \pj{s} $p_{m, j}$
are then all unitarily equivalent to each other in $B_{q^2}$,
central in $E_m$, and sum to~$1$.
In particular, they are all nonzero.
It follows that
\begin{equation}\label{Eq_2812_EmSum}
E_m = \bigoplus_{j = 1}^q p_{m, j} E_{m} p_{m, j}.
\end{equation}

For $k \in \Nz$ define
\[
D_k = C^* (u_0 u_2 \cdots u_{2 k}, \, u_{2 k + 1} ).
\]

We claim that $D_k \cong M_{q^2}$.
To see this, use (\ref{Eq_2812_c_un_gum})
(use~(\ref{Eq_2812_c_u_n_gu0}) if $k = 0$) to see that
\[
u_{2 k + 1} ( u_0 u_2 \cdots u_{2 k} ) u_{2 k + 1}^*
 = e^{2 \pi i/q^2} u_0 u_2 \cdots u_{2 k}.
\]
Also, $u_{2 k + 1}^{q^2} = 1$ by (\ref{I_modelB_uk}),
and, using (\ref{I_modelB_uk}) and (\ref{I_modelB_ujvk}),
one checks that $( u_0 u_2 \cdots u_{2 k} )^{q^2} = 1$.
It is easily seen that the universal \ca{} $C$ generated by unitaries
$v$ and~$w$ satisfying
\[
v^{q^2} = w^{q^2} = 1 \andeqn v w v^* =  e^{2 \pi i / q^2} w
\]
is the transformation group \ca{}
of the action of $\Z_{q^2}$ on $\Z_{q^2}$ by translation,
which is isomorphic to~$M_{q^2}$.
Since this algebra is simple, the claim follows.

We claim that if $k \neq l$, then $D_k$ commutes with $D_l$.
For the proof, \wolog{} $k < l$.
It suffices to prove the following:
\begin{enumerate}
\setcounter{enumi}{\value{TmpEnumi}}
\item\label{I_2812_Comm_2k2l}
$u_0 u_2 \cdots u_{2 k}$ commutes with $u_0 u_2 \cdots u_{2 l}$.
\item\label{I_2812_Comm_2klp1}
$u_0 u_2 \cdots u_{2 k}$ commutes with $u_{2 l + 1}$.
\item\label{I_2812_Comm_kp12l}
$u_{2 k + 1}$ commutes with $u_0 u_2 \cdots u_{2 l}$.
\item\label{I_2812_Comm_kp1lp1}
$u_{2 k + 1}$ commutes with $u_{2 l + 1}$.
\end{enumerate}
Of these, (\ref{I_2812_Comm_2k2l}), (\ref{I_2812_Comm_2klp1}),
and~(\ref{I_2812_Comm_kp1lp1})
all follow from~(\ref{I_modelB_ujvk}),
and (\ref{I_2812_Comm_kp12l}) follows from~(\ref{Eq_2812_c_un_gum}).
The claim is proved.

Let $k \in \Nz$.
Since $D_0, D_1, \ldots, D_k \subseteq E_{2 k + 1}$,
it follows that there is a unital \hm{} from
$D_0 \otimes D_1 \otimes \cdots \otimes D_k \cong M_{q^{2 k + 2}}$
to $E_{2 k + 1}$.
For $j = 0, 1, \ldots, q - 1$, recalling that $p_{2 k + 1, j} \neq 0$
and using~(\ref{Eq_2812_EmSum}),
we see that there is a nonzero unital \hm{}
$M_{q^{2 k + 2}}\to p_{2 k + 1, j} E_{2 k + 1} p_{2 k + 1, j}$.
Therefore $\dim (E_{2 k + 1}) \geq q^{4 k + 5}$.
It follows from~(\ref{Eq_2812_En_Span}) that
$\dim (E_{2 k + 1}) \leq q^{4 k + 5}$.
Therefore $\dim (E_{2 k + 1}) = q^{4 k + 5}$
and the maps $M_{q^{2 k + 2}}\to p_{2 k + 1, j} E_{2 k + 1} p_{2 k + 1, j}$
are isomorphisms, that is,
$E_{2 k + 1} \cong \bigoplus_{j = 1}^q M_{q^{2 k + 2}}$.
This is part~(\ref{l_modelblock}) of the conclusion for odd~$n$,
and implies part~(\ref{I_2811_Center}) of the conclusion for odd~$n$.

{}From~(\ref{Eq_2812_En_Span}) we also get
$\dim (E_{m + 1}) \leq q^2 \dim (E_{m})$ for $m = -1, 0, 1, \ldots$.
We have just seen that $\dim (E_{m}) = q^{2 m + 3}$ when $m$ is odd,
so this relation holds for all $m \in \{ -1, 0, 1, \ldots \}$.
In particular,
the elements listed on the right in (\ref{Eq_2812_En_Span})
are actually a basis for $E_m$.

This fact, and the form of the relations
(\ref{Eq_2812_c_un_gum}), (\ref{Eq_2812_c_un_g}),
(\ref{Eq_2812_c_u_n_gu0}), and~(\ref{Eq_2812_c_ggum}),
implies that $Z (E_m)$ is the linear span of the
elements $g^k u_0^{l_0} u_1^{l_1} \cdots u_m^{l_m}$
with $0 \leq k \leq q - 1$ and $0 \leq l_0, l_1, \ldots, l_m \leq q^2 - 1$
which commute with all of $g, u_0, u_1, \ldots, u_{m}$.
Moreover, these relations imply that this happens exactly when
$l_0 \in \{ 0, q, 2 q, \ldots, q^2 - q \}$, and
\[
k q = l_1, \quad l_0 = l_2, \quad l_1 = l_3, 
\quad \ldots, \quad l_{m - 2} = l_{m},
\quad {\mbox{and}} \quad l_{m - 1} = 0.
\]
If $m$ is even, this says $l_j = 0$ for all odd~$j$
and there is $r \in \{ 0, 1, \ldots, q - 1 \}$
such that $l_j = r q$ for all even~$j$.
Therefore $\dim (Z (E_m)) = q$.
It follows that
\[
Z (E_m) = \spn \bigl( p_{m, 0}, p_{m, 1}, \ldots, p_{m, q - 1} \bigr)
        = C^* (w_m),
\]
just as we saw above for $m$ odd.
This implies part~(\ref{I_2811_Center}) of the conclusion for even~$n$.
Conjugation by $u_{m + 1}$ permutes the \pj{s} $p_{m, j}$ cyclically
(by~(\ref{Eq_2812_Cj_unp1}))
and is an automorphism of $E_m$
(by (\ref{Eq_2812_c_un_gum}), (\ref{Eq_2812_c_un_g}),
and (\ref{Eq_2812_c_u_n_gu0})),
so the summands $p_{m, j} E_{m} p_{m, j}$ are all isomorphic.
Since they are simple, a dimension count shows that
they are all isomorphic to $M_{q^{m + 1}}$.
This is part~(\ref{l_modelblock}) of the conclusion for even~$n$.

It remains to prove~(\ref{I_modelBq}).
Let $m \in \{ -1, 0, 1, \ldots \}$,
and let $\mu = (\mu_{j, k})_{j, k = 1}^{q}$
be the matrix of partial embedding multiplicities of the inclusions
\[
\ps_{j, k} \colon
 p_{m, j} E_m p_{m, j} \to p_{m + 1, k} E_{m + 1} p_{m + 1, k}.
\]
We claim that $\mu_{j, k} = 1$ for $j, k = 1, 2, \ldots, q$.
To prove this, observe that conjugation by $u_{m + 2}$
permutes the \pj{s} $p_{m + 1, k}$ cyclically (by~(\ref{Eq_2812_Cj_unp1}))
but is the identity on $E_m$
(by (\ref{Eq_2812_c_un_gum}), (\ref{Eq_2812_c_un_g}),
and (\ref{Eq_2812_c_u_n_gu0})).
Therefore, for fixed~$j$,
the \pj{s} $p_{m, j} p_{m + 1, k}$ are all unitarily equivalent in~$B_{q^2}$.
Also, $u_{m + 1} \in E_{m + 1}$ and $w_{m + 1} \in Z (E_{m + 1})$,
so $u_{m + 1}$ commutes with the \pj{s} $p_{m + 1, k}$,
while $u_{m + 1}$ permutes the \pj{s} $p_{m, j}$ cyclically
by~(\ref{Eq_2812_Cj_unp1}).
It follows that the \pj{s} $p_{m, j} p_{m + 1, k}$
are all unitarily equivalent in~$B_{q^2}$ for $j, k = 0, 1, \ldots, q - 1$.
Therefore $p_{m, j} p_{m + 1, k} \neq 0$ for all $j$ and~$k$.
Thus $\mu_{j, k} \geq 1$.
Also,
\[
\mu \cdot ( q^{m + 1}, \, q^{m + 1}, \, \ldots, \, q^{m + 1} )
 = ( q^{m + 2}, \, q^{m + 2}, \, \ldots, \, q^{m + 2} ).
\]
For $j = 0, 1, \ldots, q - 1$ we therefore have
$\sum_{k = 0}^{q - 1} \mu_{j, k} = q$.
Since $\mu_{j, k} \geq 1$ for all $j$ and~$k$, the claim follows.

It is now easy to check,
for example by computing the $K_0$-group, that
${\displaystyle{ {\overline{ \bigcup_{m = -1}^{\I} E_m}} \cong B_q}}$.
This is~(\ref{I_modelBq}).
\end{proof}

\begin{lemma}\label{L_3702_ExBt}
Let $q \in \N$ satisfy $q \geq 2$.
Let $g, u_{-1}, u_0, u_1, u_2, \ldots \in B_{q^2}$
satisfy the conditions of Lemma~\ref{UHFmodel}.
Set $E = C^* ( g, u_0, u_1, u_2, \ldots) \subseteq B_{q^2}$.
Then $\beta = \Ad (u_{ - 1}) |_E$ is an automorphism of~$E$
such that:
\begin{enumerate}
%
\item\label{I_2816_btq}
$\beta^q = \Ad (g)$.
\item\label{I_2816_bt_g}
$\beta (g) = e^{2 \pi i/q} g$.
\item\label{I_2816_u0}
$\beta (u_0) = e^{- 2 \pi i/q^2} u_0$.
\item\label{I_2816_oth_un}
$\beta (u_n) = u_n$ for $n \in \N$.
\item\label{I_2816_out}
$\bt^k$ is outer for $k = 1, 2, \ldots, q - 1$.
\end{enumerate}
\end{lemma}

\begin{proof}
It follows from (\ref{I_2811_vkg_sm}),
(\ref{I_modelB_vk_vkp1}), and~(\ref{I_modelB_ujvk}) of Lemma~\ref{UHFmodel}
that $u_{-1} E u_{-1}^* = E$,
so that $\bt$ is an automorphism,
and that parts~(\ref{I_2816_bt_g}), (\ref{I_2816_u0}),
 and~(\ref{I_2816_oth_un})
of the conclusion hold.
{}From Lemma \ref{UHFmodel}(\ref{I_2811_vkg_sm})
we get $g u_0 g^* = e^{- 2 \pi i / q}u_0$.
This, parts~(\ref{I_2816_bt_g}), (\ref{I_2816_u0}), and~(\ref{I_2816_oth_un}),
and the relations (\ref{I_2811_vkg_lg}),
(\ref{I_modelB_vk_vkp1}), and (\ref{I_modelB_ujvk}) in Lemma \ref{UHFmodel},
imply~(\ref{I_2816_btq}).

It remains to prove~(\ref{I_2816_out}).
Let $k \in \{ 1, 2, \ldots, q - 1\}$.
For $n \in \Nz$, let
$w_{2 n} = u_0^q u_2^q u_4^q \cdots u_{2 n}^q$,
as in Lemma \ref{UHFmodel}(\ref{I_2811_Center}).
Using Lemma \ref{UHFmodel}(\ref{I_modelB_vk_vkp1})
and (\ref{I_modelB_ujvk}),
we get $\bt^k (w_{2 n}) = e^{- 2 \pi i k / q} w_{2 n}$, so
$\limi{n} \| \bt^k (w_{2 n}) - w_{2 n} \|
  = | 1 - e^{- 2 \pi i k / q} | \neq 0$.
However, by Lemma \ref{UHFmodel}(\ref{I_2811_Center}),
for any unitary $y \in E$,
we have $\limi{n} \| y w_{2 n} y^* - w_{2 n} \| = 0$.
So $\bt^k \neq \Ad (y)$.
\end{proof}

\begin{lemma}\label{R:2.5}
Let $q \in \N$ satisfy $q \geq 2$.
Let $A_q$ and $v$ be as in Definition~\ref{D:2.1},
and let $\af \in \Aut (A_q)$ be as in Lemma~\ref{L_3702_ExAf}.
Let $g, u_0, u_1, u_2, \ldots$ be as in Lemma~\ref{UHFmodel}
and, following Lemma \ref{UHFmodel}(\ref{I_modelBq}),
identify $B_q$ with the algebra $E = C^* ( g, u_0, u_1, u_2, \ldots)$.
Let $\bt \in \Aut (B_q)$ correspond to the automorphism
$\bt \in \Aut (E)$ of Lemma~\ref{L_3702_ExBt}.
There exists a unitary
$w \in C^* (v \otimes g) \subseteq A_q \otimes B_q$
with the following properties:
\begin{enumerate}
%
\item\label{I_2813_R_2_5_P}
$w^q = (v \otimes g)^*$.
\item\label{I_2813_R_2_5_Fix}
$(\alpha \otimes \beta) (w) = w$.
\item\label{I_2813_R_2_5_Ct}
$w$ commutes with $1 \otimes (u_0^*)^q g$.
\item\label{I_2813_R_2_5_Rkh}
If we set
\[
\gamma = \Ad (w) \circ (\alpha \otimes \beta) \in \Aut ( A_q \otimes B_q),
\]
then $\gamma$ generates an action of~${\mathbb{Z}}_q$
which has the Rokhlin property.
\setcounter{TmpEnumi}{\value{enumi}}
\end{enumerate}
\end{lemma}

\begin{proof}
The construction of $w$ satisfying (\ref{I_2813_R_2_5_P})
and~(\ref{I_2813_R_2_5_Fix}) is the same as in Lemma~4.8 of~\cite{PhV}.
It follows from Lemma \ref{UHFmodel}(\ref{I_2811_vkg_sm})
that $v \otimes g$ commutes with $1 \otimes (u_0^*)^q g$,
so~(\ref{I_2813_R_2_5_Ct}) follows from $w \in C^* (v \otimes g)$.
It is straightforward to show that
$\big[ \Ad (w) \circ (\alpha \otimes \beta) \big]^q = \id_{A_q \otimes B_q}$.

Next, we claim that for any $\varepsilon > 0$ and any finite subset
$F \subseteq A_q \otimes B_q$,
there are projections $e_0, e_1, \ldots, e_{q - 1} \in A_q \otimes B_q$
such that:
\begin{enumerate}\label{Rokhlin}
\setcounter{enumi}{\value{TmpEnumi}}
\item\label{I_2814_Perm}
$\| (\alpha \otimes \beta) (e_k) - e_{k + 1}\| < \varepsilon$
for $k = 0, 1, \ldots, q - 2$
and $\|(\alpha \otimes \beta) (e_{q - 1}) - e_0 \| < \varepsilon$.
\item\label{I_2814_Comm}
$\| y e_k - e_k y \| < \varepsilon$ for $k = 0, 1, \ldots, q - 1$
and all $y \in F$.
\item\label{I_2814_Sum}
${\displaystyle{\sum_{k = 0}^{q - 1} e_k = 1}}$.
\end{enumerate}

To prove the claim, for $n \in \Nz$
let $E_n = C^* (g, u_0, u_1, \ldots, u_n)$, as in Lemma \ref{UHFmodel}.
By a standard approximation argument,
it is enough to consider finite subsets of $A_q \otimes B_q$ of the form
\[
F = \{a \otimes b \colon {\mbox{$a \in S$ and $b \in T$}} \}
\]
for $n \in \Nz$ and finite subsets
$S \subseteq A_q$ and $T \subseteq E_{2 n}$.
Let $\varepsilon > 0$.
Let $w_{2 n} = u_0^q u_2^q u_4^q \cdots u_{2 n}^q$,
as in Lemma \ref{UHFmodel}(\ref{I_2811_Center}).
By Lemma \ref{UHFmodel}(\ref{I_2811_vkg_lg}) and~(\ref{I_modelB_vk_vkp1}),
and the formula for $\bt$ in Lemma~\ref{L_3702_ExBt},
\begin{equation}\label{Eq_2814_btw}
\bt (w_{2 n}) = e^{- 2 \pi i / q} w_{2 n}.
\end{equation}
Let $p_0, p_1, \ldots, p_{q - 1} \in E_{2 n}$ be the spectral \pj{s}
for $w_{2 n}$, labelled so that
${w_{2 n} = \displaystyle{\sum_{k = 0}^{q - 1} e^{2 \pi i k / q} p_k}}$.
It follows from~(\ref{Eq_2814_btw}) that $\bt (p_k) = p_{k + 1}$
for $k = 0, 1, \ldots, q - 2$
and $\bt (p_{q - 1}) = p_0$.
For $k = 0, 1, \ldots, q - 1$, set $e_k = 1 \otimes p_k$.
These \pj{s} satisfy conditions (\ref{I_2814_Perm}) and~(\ref{I_2814_Sum}).
Since $p_k \in Z (E_{2 n})$ (by Lemma \ref{UHFmodel}(\ref{I_2811_Center})),
we have $e_k (a \otimes b) = (a \otimes b) e_k$
for all $a \in S$ and $b \in T$.
So (\ref{I_2814_Comm}) holds.
This proves the claim.

To show that $\gamma$ satisfies the Rokhlin property,
let $F \subseteq A_q \otimes B_q$ be finite and let $\varepsilon > 0$.
Construct projections $e_0, e_1, \ldots, e_{q - 1}$ as in the claim,
with $F \cup \{w\}$ in place of $F$
and $\frac{\varepsilon}{2}$ in place of $\varepsilon$.
We verify the analogs of (\ref{I_2814_Perm}),
(\ref{I_2814_Comm}), and~(\ref{I_2814_Sum})
with $\gamma = \Ad (w) \circ (\alpha \otimes \beta)$ in place of
$\alpha \otimes \beta$.
Only the analog of~(\ref{I_2814_Perm}) requires proof.
For $k = 0, 1, \ldots, q - 2$ we have
\begin{equation*}
\begin{split}
\| \gamma (e_k) - e_{k + 1}\|
& \leq \bigl\| \Ad (w) (\alpha \otimes \beta) (e_k)
        - \Ad (w) (e_{k + 1}) \bigr\|
   + \| w e_{k + 1} w^* - e_{k + 1} \|
 \\
& \leq \| (\alpha \otimes \beta) (e_k) - e_{k + 1}\|
   + \|w e_{k + 1} - e_{k + 1} w \|
  < \frac{\varepsilon}{2} + \frac{\varepsilon}{2}
  = \varepsilon.
\end{split}
\end{equation*}
The proof that $\|(\alpha \otimes \beta) (e_{q - 1}) - e_0 \| < \varepsilon$
is essentially the same.
\end{proof}

\begin{definition}\label{D:2.4}
Let $A_q$ be as in Definition~\ref{D:2.1}
and let $B_q$ be as in Notation~\ref{D:2.3}.
Set $C_q = A_q \otimes B_q$, and let $\gamma$ be
the automorphism of Lemma \ref{R:2.5}.
We also write $\gamma$ for the action
of~${\mathbb{Z}}_q$ generated by this automorphism,
and define the \ca~$D_q$ by
$D_q = C_q \rtimes_{\gamma} {\mathbb{Z}}_q$.
\end{definition}

\begin{proposition}\label{pcaseproposition}
Let $q \in \{ 2, 3, \ldots \}$.
The \ca{} $D_q = C_q \rtimes_{\gamma} {\mathbb{Z}}_q$
of Definition~\ref{D:2.4} is simple, separable, unital, and exact.
It tensorially absorbs the $q^{\infty}$ UHF algebra~$B_q$
and the Jiang-Su algebra~$Z$.
Moreover, $D_q$ is approximately divisible, stably finite,
has real rank zero and stable rank one,
and has a unique tracial state,
which determines the order on projections over~$D_q$.
Also,
\[
K_0 (D_q) \cong {\mathbb{Z}} \big[ \tfrac{1}{q} \big]
\andeqn
K_1 (D_q) = 0,
\]
where the first isomorphism sends $[1]$ to~$1$, and
is an isomorphism of ordered groups.
Finally,
letting ${\mathbb{Z}} \big[ \tfrac{1}{q} \big]_{+}$
be the set of nonnegative elements
in ${\mathbb{Z}} \big[ \tfrac{1}{q} \big] \subseteq \R$,
the Cuntz semigroup of $D_q$ is given by
\[
W (D_q)
 \cong {\mathbb{Z}} \big[ \tfrac{1}{q} \big]_{+} \amalg (0, \infty).
\]
\end{proposition}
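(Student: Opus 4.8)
The plan is to separate the structural properties of $D_q$ from its $K$-theory, and in both parts to exploit that $\gm$ generates a $\mathbb{Z}_q$-action with the Rokhlin property (Lemma~\ref{R:2.5}), following the pattern of~\cite{PhV} for $q=3$. A point to keep in mind throughout is that $A_q$, and hence $D_q$, is exact but \emph{not} nuclear, so the classification theorems for nuclear \ca{s} are unavailable; instead I would derive the fine structure from $Z$-stability together with permanence properties of Rokhlin crossed products.

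First I would dispose of the elementary properties. Separability and unitality are immediate. Exactness of $C_q = A_q \otimes B_q$ follows from exactness of $A_q$ (Remark~\ref{R:2.2}) and nuclearity of the UHF algebra $B_q$, and passes to $D_q = C_q \rtimes_{\gm} \mathbb{Z}_q$ because $\mathbb{Z}_q$ is finite. Simplicity of $C_q$ follows from simplicity of $A_q$ and $B_q$ (one factor nuclear); since the Rokhlin property forces $\gm$ to be pointwise outer, $D_q$ is simple as well. For absorption, $C_q$ is $B_q$-stable because $B_q \otimes B_q \cong B_q$, so by the Hirshberg--Winter theorem on Rokhlin actions and strongly self-absorbing \ca{s}, $D_q$ is $B_q$-stable; since every UHF algebra absorbs $Z$, this gives $D_q \otimes Z \cong D_q$ and approximate divisibility. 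The unique trace on $C_q$ is the tensor product of the unique traces on $A_q$ and $B_q$, it is automatically $\gm$-invariant by uniqueness, and outerness of the action yields a unique tracial state on $D_q$; faithfulness of this trace on the simple algebra $D_q$ gives stable finiteness. Once $Z$-stability and finiteness are known, stable rank one is automatic by R{\o}rdam's theorem for simple unital finite $Z$-stable \ca{s}, which requires no nuclearity.

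The heart of the argument is the $K$-theory, and this is where I expect the main difficulty. I would first compute $K_*(A_q)$ from the six-term exact sequence for reduced free products amalgamated over $\C$: each of the $q$ copies of $C([0,1])$ contributes only its unit class, while $\C^q$ contributes $K_0(\C^q) = \mathbb{Z}^q$ generated by its minimal projections, so that $K_0(A_q) \cong \mathbb{Z}^q$ with $[1]$ the sum of the generators and $K_1(A_q) = 0$. The K\"unneth formula (with $B_q$ torsion-free and $K_0(B_q) = \mathbb{Z}[\tfrac{1}{q}]$) then gives $K_0(C_q) \cong \mathbb{Z}[\tfrac{1}{q}]^q$ and $K_1(C_q) = 0$. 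Next I would identify $\gm_*$ on $K_0(C_q)$: since $\Ad(w)$ is inner and any automorphism of the UHF algebra $B_q$ acts trivially on $K_0(B_q)$, the map $\gm_*$ equals $\af_* \otimes \id$; and because $\af(u_0) = e^{-2\pi i/q} u_0$ forces $\af$ to cyclically permute the spectral projections of $u_0$ in $\C^q$, the automorphism $\af_*$ is the cyclic shift $\sm$ of the $q$ coordinates. The remaining, and hardest, step is to feed this data through the $K$-theory computation for the Rokhlin $\mathbb{Z}_q$-crossed product. For such an action the inclusion $C_q \hookrightarrow D_q$ should induce the passage to the coinvariants of $\gm_*$ in each degree; since the cokernel of $1 - \sm$ on $\mathbb{Z}[\tfrac{1}{q}]^q$ is $\mathbb{Z}[\tfrac{1}{q}]$ (via the coordinate sum) while $K_1(C_q) = 0$, this yields $K_0(D_q) \cong \mathbb{Z}[\tfrac{1}{q}]$ and $K_1(D_q) = 0$. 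Identifying $[1_{D_q}]$ with $1$ (legitimate since $q$ is invertible in $\mathbb{Z}[\tfrac{1}{q}]$) and checking that the isomorphism respects the order are the points that need care.

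With the invariant in hand, the remaining claims follow formally. Strict comparison, which holds for the $Z$-stable algebra $D_q$ by R{\o}rdam, is precisely the statement that the order on projections over $D_q$ is determined by traces. Real rank zero follows because $D_q$ is simple, unital, and $Z$-stable with a unique trace $\ta$ whose induced map $\ta_* \colon K_0(D_q) = \mathbb{Z}[\tfrac{1}{q}] \to \R$ has dense range, so that projections are plentiful; alternatively it transfers from $C_q$ by permanence of real rank zero under Rokhlin actions. For the Cuntz semigroup I would apply Remark~\ref{R:2.15}: stable rank one gives cancellation, so $V(D_q) \cong K_0(D_q)_+ = \mathbb{Z}[\tfrac{1}{q}]_+$, while $T(D_q)$ is a single point and ${\operatorname{LAff}}_{\operatorname{b}}(T(D_q))_{++} \cong (0,\infty)$, giving $W(D_q) \cong \mathbb{Z}[\tfrac{1}{q}]_+ \amalg (0,\infty)$.
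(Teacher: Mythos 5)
Your overall strategy matches the paper's: establish the structural properties of $C_q = A_q \otimes B_q$ first, transfer them to $D_q$ using the Rokhlin property of $\gamma$, compute $K_*$ via the K\"unneth formula and the finite-group crossed product, and read off $W(D_q)$ from Remark~\ref{R:2.15}. Where you differ, your route is legitimate and arguably cleaner: the paper obtains stable rank one, real rank zero, approximate divisibility, and the unique trace of $D_q$ from the Rokhlin permanence results of Osaka--Phillips, and comparison of projections from Proposition~2.6 of~\cite{PhV}, whereas you invoke R{\o}rdam's theorems on simple unital finite exact $Z$-stable algebras (stable rank one, strict comparison, and the real rank zero criterion via density of the image of $K_0$ in the affine functions on the trace space). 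Since $D_q$ is exact, these apply and no nuclearity is needed, as you correctly emphasize.

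Two steps need repair. First, ``outerness of the action yields a unique tracial state on $D_q$'' is not by itself a valid inference: pointwise outerness of a finite group action on a simple unital C*-algebra with a unique trace does not force every tracial state on the crossed product to vanish on the components $C_q u_g$ with $g \neq 0$; for that one needs outerness of the extended action on $\pi_{\ta}(C_q)''$ or, more directly, the Rokhlin property, which you already have from Lemma~\ref{R:2.5} (this is Proposition~4.14 of~\cite{OsaPhil}, the route the paper takes). Second, your K-theoretic mechanism is stated backwards: for a Rokhlin action of a finite group the correct general statement (Izumi's theorem, used via the proof of Proposition~7.2 of~\cite{PhV}) identifies $K_*(D_q)$ with the subgroup of $\gamma_*$-\emph{invariants} $\bigcap_{m} \ker(\id - K_*(\gamma^m))$, not with the coinvariants, and the inclusion $K_*(C_q) \to K_*(D_q)$ need not induce an isomorphism on coinvariants for a general Rokhlin action. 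You are rescued here because $K_0(C_q) \cong {\mathbb{Z}}\big[\tfrac{1}{q}\big]^q$ is a ${\mathbb{Z}}\big[\tfrac{1}{q}\big]$-module, so the invariants and coinvariants of the cyclic shift are canonically isomorphic and both reduce to ${\mathbb{Z}}\big[\tfrac{1}{q}\big]$; the final answer is unaffected, but the principle you appeal to is not the right one. The points you flag but do not carry out ($[1] \mapsto 1$ and the order isomorphism) are exactly those the paper also defers to~\cite{PhV}.
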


\begin{proof}
We first consider the algebra $C_q = A_q \otimes B_q$
in place of~$D_q$, and we prove that it has most of the
properties listed for~$D_q$.
The exceptions are that we
do not prove stable finiteness or that the order on projections
over $C_q$ is determined by traces, the K-theory is different
(and we postpone its calculation),
and we do not compute the Cuntz semigroup.

It is obvious that $C_q$ is separable and unital.
To prove simplicity of $C_q$,
use simplicity of $A_q$
(Remark~\ref{R:2.2}),
simplicity and nuclearity of the UHF algebra~$B_q$,
and the corollary on page~117 of~\cite{Take1}.
(We warn that~\cite{Take1} systematically
refers to tensor products as ``direct products''.)
Exactness of $C_q$ follows from exactness of~$A_q$
(Remark~\ref{R:2.2}),
exactness of~$B_q$,
and Proposition 7.1(iii) of~\cite{Kr2}.
Since $A_q$ and $B_q$ have unique tracial states
(the first by Remark~\ref{R:2.2}),
Corollary~6.13 of~\cite{CP}
(or Lemma~\ref{L_3705_AffHme} below)
implies that $C_q$ has a unique tracial state.
Since $A_q$ is stably finite (Remark \ref{R:2.2}),
and $B_q$ is a UHF algebra, Corollary~6.6
of~\cite{Rordam1} implies that
${\operatorname{tsr}} (A_q \otimes B_q) = 1$.
The algebra $B_q$ is approximately divisible
by Proposition~4.1 of~\cite{BKR},
so $A_q \otimes B_q$ is approximately divisible.
Since $C_q$
is simple, approximately divisible, exact,
and has a unique tracial state,
it has real rank zero by Theorem~1.4(f) of~\cite{BKR}.
The algebra $B_q$ tensorially absorbs~$B_q$,
and tensorially absorbs the Jiang-Su algebra~$Z$
by Corollary 6.3 of~\cite{JS}.
Therefore $C_q$ tensorially absorbs both algebras.

The algebra $D_q$ is separable and unital because $C_q$ is.
Exactness of $D_q$ follows from Proposition 7.1(v) of~\cite{Kr2}.
Parts (\ref{I_2816_btq}) and~(\ref{I_2816_out}) of Lemma~\ref{L_3702_ExBt}
say that $\beta$ has period $q$ in ${\operatorname{Out}} (B_q)$.
So, by Theorem~1 in~\cite{Was},
for $k = 1, 2, \ldots, q - 1$ the automorphism $\gm^k$ is outer.
Theorem~3.1 of~\cite{Ks1} now implies that $D_q$ is simple.
Since $\gm$ has
the Rokhlin property by Lemma \ref{R:2.5}(\ref{I_2813_R_2_5_Rkh}), $D_q$
has a unique tracial state by Proposition 4.14
of~\cite{OsaPhil}, ${\operatorname{tsr}} (D_q) = 1$
by Proposition 4.1(1) of~\cite{OsaPhil}, $D_q$ is
approximately divisible by Proposition 4.5
of~\cite{OsaPhil}, and $D_q$ has real rank zero
by Proposition 4.1(2) of~\cite{OsaPhil}.
Combining
Corollary 3.4(1) of~\cite{HW} with the Rokhlin property,
we see that $D_q$ absorbs both $B_q$ and $Z$.
Simplicity of $D_q$ and existence of a tracial state
imply stable finiteness.

It now follows from Proposition~2.6 of~\cite{PhV}
that the order on projections over $D_q$ is determined
by traces.

The computation of $K_0 (D_q)$ is done in the same
way as in the proof of Proposition~7.2 of~\cite{PhV},
and we refer the reader to that article for the many details
we omit in the following computation.
Here we have
\[
K_0 (A_q) \cong {\mathbb{Z}}^q,
\quad
K_1 (A_q) = 0,
\quad
K_0 (B_q) \cong {\mathbb{Z}} \big[ \tfrac{1}{q} \big],
\quad {\mbox{and}} \quad
K_1 (B_q) = 0,
\]
so that the K\"{u}nneth formula (see~\cite{Sc2}) gives
\[
K_0 (C_q) \cong {\mathbb{Z}} \big[ \tfrac{1}{q} \big]^q
\andeqn
K_1 (C_q) = 0.
\]
Moreover, by the argument used in the proof of
Proposition 7.2 of~\cite{PhV},
\[
K_* (D_q)
 \cong \bigcap_{m = 0}^{q - 1}
  \operatorname{Ker} (\id - K_* (\gamma^m)).
\]

For $j = 1, 2, \ldots, q$, define
$r_j = (0, \ldots, 0, 1, 0, \ldots, 0) \in \C^q$
where $1$ is in the $j$-th position.
Then the unitary $v$ of~(\ref{equ2.1}) is
\[
v = e^{2 \pi i/q} r_1 + r_2 + e^{2 (q - 1) \pi i/q} r_3
   + \cdots + e^{4 \pi i/q} r_q,
\]
and
\[
\alpha (v)
 = r_1 + e^{2 (q - 1) \pi i/q} r_2
   + e^{2 (q - 2) \pi i/q} r_3
   + \cdots + e^{2 \pi i/q} r_q.
\]
This implies that $\alpha (r_j) = r_{j - 1}$
for $j = 2, 3, \ldots, q$
and that $\alpha (r_1) = r_q$.
Since $\Ad (w)$ and $\beta$ are trivial on K-theory,
it follows that
$K_0 (\gamma) \colon {\mathbb{Z}} \big[ \tfrac{1}{q} \big]^q
 \to {\mathbb{Z}} \big[ \tfrac{1}{q} \big]^q$
is given by
\[
K_0 (\gamma) (\et_1, \et_2, \ldots, \et_q) = (\et_2, \et_3, \ldots, \et_{q}, \et_1).
\]
Therefore
$\id - K_0 (\gamma)$ corresponds to the matrix
\[
\left(  \begin{array}{rrrrrr}
   1 & - 1 & 0 & \cdots & 0 & 0 \\
   0 &  1 & - 1 & \cdots & 0 & 0 \\
   0 & 0 & 1 & \cdots & 0 & 0 \\
   \vdots & \vdots & \vdots & \ddots & \vdots & \vdots \\
   0 & 0 & 0 & \cdots & 1 & - 1 \\
  - 1 &  0 & 0 & \cdots & 0 & 1
\end{array} \right).
\]

The map $\et \to (\et, \et, \ldots, \et)$
is an isomorphism from ${\mathbb{Z}} \big[ \tfrac{1}{q} \big]$
to $\operatorname{Ker} (\id - K_0 (\gamma))$,
and one checks that its image is contained in
$\operatorname{Ker} (\id - K_0 (\gamma^m))$ for all $m$ such that
$0 \leq m \leq q - 1$.
Therefore this map is an isomorphism from
${\mathbb{Z}} \big[ \tfrac{1}{q} \big]$ to
$\displaystyle{\bigcap_{m = 0}^{q - 1}
  \operatorname{Ker} (\id - K_0 (\gamma^m))}$.

The computation of the Cuntz semigroup
now follows from Remark~\ref{R:2.15} by observing that
$V (D_q)$ is the positive part of $K_0 (D_q)$
and the uniqueness of the tracial state on $D_q$
implies that
${\operatorname{LAff}}_{\operatorname{b}} (\T (D_q))_{++}
 = (0, \infty)$.
\end{proof}

\begin{proposition}\label{C:2.7}
Let $C_q$  and
$\gamma \colon \mathbb{Z}_q \to \Aut (C_q)$ be
as in Definition~\ref{D:2.4}.
Then $D_q = C_q \rtimes_{\gamma} \mathbb{Z}_q$ satisfies
the Universal Coefficient Theorem.
\end{proposition}

\begin{proof}
By Theorem~1.1 in \cite{Ha2} (see also Theorem 4.1 in \cite{Ge1}),
the algebra $A_q$ in Definition~\ref{D:2.1} is KK-equivalent to
the full free product
\[
E = C ([0, 1]) \star_{\mathbb{C}} \cdots
 \star_{\mathbb{C}} C ([0, 1]) \star_{\mathbb{C}} {\mathbb{C}}^q.
\]
It is shown in the proof of Theorem 2.7 in \cite{Thn2}
that if $A$ and $B$ are separable unital \ca{s},
then the suspension $S (A \star_{\mathbb{C}} B)$
of the amalgamated free product
is KK-equivalent to the mapping cone of the
inclusion $\mathbb{C} \hookrightarrow A \oplus B$.
Therefore $A \star_{\mathbb{C}} B$
satisfies the Universal Coefficient Theorem when $A$ and $B$ do.
Arguing inductively, we see that $E$, and therefore also $A_q$,
satisfies the Universal Coefficient Theorem.
Since $C_q$ is the tensor product of $A_q$ with a UHF algebra,
it too satisfies the Universal Coefficient Theorem.

Observe also that $C_q = A_q \otimes B_q$
absorbs the $q^{\infty}$ UHF algebra.
In addition, by Lemma \ref{R:2.5}(\ref{I_2813_R_2_5_Rkh}),
the action $\gamma$ has the
Rokhlin property, so Proposition \ref{P_2_6}(\ref{P_2_6_CRP})
implies that $\gamma$ has the continuous Rokhlin property.
Using Proposition \ref{UCTcrossedproduct}, we
conclude that $C_q \rtimes_{\gamma} \mathbb{Z}_q$
satisfies the Universal Coefficient Theorem.
\end{proof}

\section{The main step}\label{Sec_Main}

Let $D_q = C_q \rtimes {\mathbb{Z}}_q$
be as in Definition~\ref{D:2.4},
and let $\ta$ be its unique tracial state.
In this section we show that
if $q$ is an odd prime such that $- 1$
is not a square mod~$q$,
then $\pi_{\ta} (D_q)''$
is not isomorphic to its opposite algebra.
This is the main step in proving that $D_q$,
as well as the tensor product $E \otimes D_q$ for suitable~$E$,
is not isomorphic to its opposite algebra.

The following result belongs to the theory of cocycle conjugacy,
but we have not found a reference in the literature.

\begin{lemma}\label{L:Cocycle}
Let $M$ be a factor and let $n \in \N$.
Let $\af, \bt \colon \Z_n \to \Aut (M)$
be actions of $\Z_n$ on~$M$.
Write the elements of $\Z_n$ as $0, 1, \ldots, n - 1$,
so that, for example, the automorphisms generating
the actions are $\af_1$ and $\bt_1$.
Suppose that
there is a unitary $y \in M$ such that
$\bt_1 = \Ad (y) \circ \af_1$.
Then there is an isomorphism
$\ph \colon M \rtimes_{\bt} \Z_n \to M \rtimes_{\af} \Z_n$
which intertwines the dual actions,
that is, for all $l \in {\widehat{\Z_n}}$ we have
$\ph \circ {\widehat{\bt}}_l = {\widehat{\af}}_l \circ \ph$.
\end{lemma}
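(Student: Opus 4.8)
The plan is to exhibit $\ph$ explicitly on the canonical generators and then correct it by a scalar so that it respects the order-$n$ relation. Let $u_\af \in M \rtimes_\af \Z_n$ and $u_\bt \in M \rtimes_\bt \Z_n$ be the canonical implementing unitaries, so that $u_\af x u_\af^* = \af_1(x)$ and $u_\bt x u_\bt^* = \bt_1(x)$ for $x \in M$, and $u_\af^n = u_\bt^n = 1$. I will look for $\ph$ with $\ph|_M = \id_M$ and with $\ph(u_\bt)$ a unitary in $M \rtimes_\af \Z_n$ that implements $\bt_1$ and has order dividing~$n$. The first natural candidate is $z = y u_\af$: since $y \in M$, conjugation by $z$ restricts on $M$ to $\Ad(y) \circ \af_1 = \bt_1$, so $z$ does implement $\bt_1$.

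The obstacle is that $z^n$ need not equal~$1$. A short induction gives $z^k = c_k\, u_\af^k$, where $c_k = y \af_1(y) \cdots \af_1^{k-1}(y) \in M$, so $z^n = c_n$ (using $u_\af^n = 1$). Here the factoriality hypothesis enters. The identical computation in $\Aut(M)$ yields $\bt_1^n = \Ad(c_n) \circ \af_1^n = \Ad(c_n)$, and since $\bt$ is an action we have $\bt_1^n = \id$; hence $\Ad(c_n) = \id$, so $c_n$ is a central unitary, and as $M$ is a factor, $c_n = \zt \cdot 1$ for some $\zt \in \C$ with $|\zt| = 1$. Now I would absorb this scalar: choose $\mu \in \C$ with $\mu^n = \zt^{-1}$ and set $z' = \mu z$. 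Multiplying by a scalar leaves $\Ad(z') = \Ad(z)$ unchanged, so $z'$ still implements $\bt_1$ on $M$, while now $z'^n = \mu^n z^n = \mu^n \zt = 1$. Thus $(\id_M, z')$ is a covariant pair satisfying exactly the relations defining $M \rtimes_\bt \Z_n$, and by the universal property of the crossed product the assignment $\sum_{k} x_k u_\bt^k \mapsto \sum_{k} x_k z'^k$ extends to a normal unital $*$-homomorphism $\ph \colon M \rtimes_\bt \Z_n \to M \rtimes_\af \Z_n$.

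Next I would verify bijectivity directly, avoiding any appeal to simplicity of the crossed products (which may fail). For surjectivity, note that the image contains $M$ and $z' = \mu y u_\af$, hence also $u_\af = \mu^{-1} y^{-1} z'$, and therefore all of $M \rtimes_\af \Z_n$. For injectivity, use $z'^k = \mu^k c_k u_\af^k$: if $\ph\big(\sum_k x_k u_\bt^k\big) = \sum_k (x_k \mu^k c_k) u_\af^k = 0$, then uniqueness of the expansion in $M \rtimes_\af \Z_n$ forces $x_k \mu^k c_k = 0$ for each $k$, and since $\mu \neq 0$ and $c_k$ is a unitary, $x_k = 0$. So $\ph$ is a $*$-isomorphism. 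Finally the dual actions intertwine essentially for free: for $l \in \widehat{\Z_n}$ let $\om_l$ be the root of unity by which $\widehat{\bt}_l$ and $\widehat{\af}_l$ scale the respective implementing unitaries (both dual actions fix $M$ pointwise, and the convention is the same over the common group $\Z_n$). Then $\ph(\widehat{\bt}_l(u_\bt)) = \ph(\om_l u_\bt) = \om_l \mu y u_\af$, while $\widehat{\af}_l(\ph(u_\bt)) = \widehat{\af}_l(\mu y u_\af) = \mu y\, \om_l u_\af$, since $\mu$ is a scalar and $y \in M$ is fixed by $\widehat{\af}_l$; these agree, and both composites are clearly equal on $M$, giving $\ph \circ \widehat{\bt}_l = \widehat{\af}_l \circ \ph$.

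The main obstacle is precisely the order-$n$ defect $z^n = c_n$: the construction only succeeds once one recognizes, via factoriality, that $c_n$ is a scalar of modulus one that can be cleared by an $n$-th root correction. Everything else—covariance, the universal property for the finite group $\Z_n$, and the dual-action intertwining—is then routine, and the explicit degree expansion $z'^k = \mu^k c_k u_\af^k$ handles injectivity without any structural assumption on the crossed products.
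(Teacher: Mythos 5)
Your proof is correct and follows essentially the same route as the paper's: the cocycle $c_k = y\,\af_1(y)\cdots\af_1^{k-1}(y)$ is the paper's $y_k$, factoriality forces $c_n$ to be a scalar, and your correction $z' = \mu z$ with $\mu^n = c_n^{-1}$ is exactly the paper's rescaled cocycle $z_k = \zt^{-k}y_k$ feeding into the map $a v_k \mapsto a z_k u_k$. The only cosmetic difference is that you package the computation as powers of the single unitary $y u_\af$ inside the crossed product and check bijectivity via the degree expansion, whereas the paper defines the linear bijection on all of $M \rtimes_\bt \Z_n$ directly; the content is identical.
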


\begin{proof}
For $k \in \Z$ we write $\af_k = \af_1^k$ and $\bt_k = \bt_1^k$.
(This agrees with the notation
in the statement
when $k \in \{ 0, 1, \ldots, n - 1 \}$.)
For $k \in \N$ define a unitary $y_k \in M$ by
\[
y_k = y \af_1 (y) \af_2 (y) \cdots \af_{k - 1} (y).
\]
Set $y_0 = 1$,
and define
$y_{k} = \af_{k} (y_{- k}^*)$ for $k < 0$.
Then one easily checks that $\Ad (y_k) \circ \af_k = \bt_k$
for all $k \in \Z$,
and moreover that $y_j \af_j (y_k) = y_{j + k}$
for all $j, k \in \Z$.

Since $\af_n = \bt_n = \id_M$
and $M$ is a factor,
we have $y_n \in \mathbb{C} \cdot 1$.
So there is a scalar $\zt$ with $| \zt | = 1$
such that $y_n = \zt^{n} \cdot 1$.
For $k \in \Z$ define
$z_k = \zt^{-k} y_{k}$.
Then $z_k$ is unitary,
and we have $\Ad (z_k) \circ \af_k = \bt_k$
for all $k \in \Z$
and $z_j \af_j (z_k) = z_{j + k}$
for all $j, k \in \Z$.
Moreover, $z_j = z_k$ whenever $n$ divides $j - k$.

Let $u_0, u_1, \ldots, u_{n - 1}$
be the standard unitaries in the crossed product
$M \rtimes_{\alpha} {\mathbb{Z}}_n$ which implement~$\alpha$,
so that for $a \in M \subseteq M \rtimes_{\af} \Z_n$ we have
$u_k a u_k^* = \af_k (a)$
and
\[
M \rtimes_{\af} \Z_n
 = \left\{ \sum_{k = 0}^{n - 1} a_k u_k \colon
    a_0, a_1, \ldots a_{n - 1} \in M \right\}.
\]
Similarly let $v_0, v_1, \ldots, v_{n - 1}$ be the
standard unitaries in $M \rtimes_{\bt} \Z_n$
which implement $\bt$.
Then there is a unique linear bijection
$\ph \colon M \rtimes_{\bt} \Z_n \to M \rtimes_{\af} \Z_n$
such that $\ph (a v_k) = a z_k u_k$ for $a \in M$
and $k = 0, 1, \ldots, n - 1$.
One checks, using the properties of $(z_k)_{k \in \Z}$,
that $\ph$ is a \hm.
Moreover,
\[
\big( \ph \circ {\widehat{\bt}}_l \big) (a v_k)
 = \ph \big( e^{2 \pi i k l / n} a v_k \big)
 = e^{2 \pi i k l / n} a z_k u_k
 = {\widehat{\af}}_l (a z_k u_k)
 = ({\widehat{\af}}_l \circ \ph) (a v_k)
\]
for every $a \in M$ and $k \in \{ 0, 1, \ldots, n - 1 \}$.
Therefore
$\ph \circ {\widehat{\bt}}_l = {\widehat{\af}}_l \circ \ph$.
\end{proof}

\begin{lemma}\label{L_3705_Q}
Let $A$ and $B$ be \ca{s}.
Let $\rh$ be a state on~$A$ and let $\om$ be a state on~$B$.
Then
$\pi_{\rh \otimes \om} (A \otimes_{\mathrm{min}} B) ''
  \cong \pi_{\rh} (A)'' {\overline{\otimes}} \pi_{\om} (B)''$.
\end{lemma}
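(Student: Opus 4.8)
The plan is to realize the GNS data of the product state concretely on the Hilbert space tensor product $H_{\rh} \otimes H_{\om}$ and then compare the von Neumann algebras generated by the images of elementary tensors. Since $A \otimes_{\mathrm{alg}} B$ is dense in $A \omin B$ and $\rh \otimes \om$ restricts to the product state on it, the space $H_{\rh \otimes \om}$ is the completion of $A \otimes_{\mathrm{alg}} B$ modulo the null space in the inner product $\langle x, y \rangle = (\rh \otimes \om)(y^* x)$, so it suffices to work with elementary tensors. First I would build a unitary $U \colon H_{\rh \otimes \om} \to H_{\rh} \otimes H_{\om}$. The key computation is that on elementary tensors the inner product factors:
\[
(\rh \otimes \om)\big( (a_2 \otimes b_2)^* (a_1 \otimes b_1) \big)
  = \rh (a_2^* a_1)\, \om (b_2^* b_1),
\]
which is exactly the inner product of $\pi_{\rh}(a_1)\xi_{\rh} \otimes \pi_{\om}(b_1)\xi_{\om}$ with $\pi_{\rh}(a_2)\xi_{\rh} \otimes \pi_{\om}(b_2)\xi_{\om}$ in $H_{\rh} \otimes H_{\om}$. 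Hence sending the class of $a \otimes b$ to $\pi_{\rh}(a)\xi_{\rh} \otimes \pi_{\om}(b)\xi_{\om}$ is a well-defined inner-product-preserving map on the algebraic tensor product, and since its range spans a dense subspace it extends to a unitary~$U$.

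A direct check on elementary tensors shows $U \pi_{\rh \otimes \om}(a \otimes b) U^* = \pi_{\rh}(a) \otimes \pi_{\om}(b)$. Writing $M = \pi_{\rh}(A)''$ and $N = \pi_{\om}(B)''$, conjugation by $U$ therefore carries $\pi_{\rh \otimes \om}(A \omin B)''$ onto $\mathcal{P} = \{\pi_{\rh}(a) \otimes \pi_{\om}(b) : a \in A,\, b \in B\}''$, and it remains to identify $\mathcal{P}$ with $M \overline{\otimes} N$. The inclusion $\mathcal{P} \subseteq M \overline{\otimes} N$ is immediate, since each generator $\pi_{\rh}(a) \otimes \pi_{\om}(b)$ already lies in the von Neumann algebra $M \overline{\otimes} N$.

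For the reverse inclusion I would first place the one-sided operators into $\mathcal{P}$. Choosing an approximate identity $(e_{\ld})$ for $B$, nondegeneracy of the GNS representation gives $\pi_{\om}(e_{\ld}) \to 1$ strongly, so $\pi_{\rh}(a) \otimes \pi_{\om}(e_{\ld}) \to \pi_{\rh}(a) \otimes 1$ strongly and hence $\pi_{\rh}(a) \otimes 1 \in \mathcal{P}$; symmetrically $1 \otimes \pi_{\om}(b) \in \mathcal{P}$. By the double commutant theorem together with Kaplansky density, each element of $M$ is a strong limit of a \emph{bounded} net from $\pi_{\rh}(A)$, and tensoring such a net with~$1$ preserves strong convergence, so $M \otimes 1 \subseteq \mathcal{P}$ and likewise $1 \otimes N \subseteq \mathcal{P}$. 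As $\mathcal{P}$ is a von Neumann algebra containing both $M \otimes 1$ and $1 \otimes N$, it contains the $*$-algebra they generate, namely $M \otimes_{\mathrm{alg}} N$, and therefore its weak closure $(M \otimes_{\mathrm{alg}} N)'' = M \overline{\otimes} N$ by the definition of the spatial von Neumann tensor product. Combining the two inclusions yields $\mathcal{P} = M \overline{\otimes} N$, and transporting back along $U$ finishes the proof.

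I expect the main obstacle to be the non-unital case in the last step: when $A$ or $B$ lacks a unit, one cannot read off $\pi_{\rh}(a) \otimes 1$ and $1 \otimes \pi_{\om}(b)$ as images of elementary tensors, so the passage from these one-sided operators to all of $M \otimes 1$ and $1 \otimes N$ must go through strong approximation (approximate identities and a bounded Kaplansky net) rather than an algebraic identity. Once those two inclusions are secured, the remainder is routine matching against the definition of $\overline{\otimes}$.
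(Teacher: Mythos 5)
Your proof is correct and follows exactly the route the paper indicates (its proof is omitted with the hint that ``one starts by identifying $H_{\rh \otimes \om}$ with $H_{\rh} \otimes H_{\om}$''): you build that unitary, check the intertwining on elementary tensors, and close the argument with approximate identities and Kaplansky density. Your extra care about the non-unital case is a sensible addition, but the approach is the same.
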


\begin{proof}
The proof is straightforward
(one starts by identifying $H_{\rh \otimes \om}$
with $H_{\rh} \otimes H_{\om}$),
and is omitted.
\end{proof}

For $q \in \{2, 3, \ldots \}$ let
$D_q = C_q \rtimes_{\gamma} {\mathbb{Z}}_q$ be the \ca{}
of Definition~\ref{D:2.4}.
Our next step is to show that $D_q$ is not isomorphic to its
opposite algebra whenever $q$ is an odd prime
such that $- 1$ is not a square mod~$q$, by associating to
$D_q$ a $\mathrm{II}_1$ factor $T_q$ and
computing the Connes invariant of $T_q$.

For $q \in \{2, 3, \ldots \}$, set
\[
N_q = \big[ \bigstar_1^q L^{\infty} ([0, 1]) \big]
 \star {\mathcal{L}} ( {\mathbb{Z}}_q),
\]
and for $k = 1, 2, \ldots, q$ denote by
${\overline{\ep}}_k \colon L^{\infty} ([0, 1]) \to N_q$
the inclusion of the \mbox{$k$-th} free factor
$L^{\infty} ([0, 1])$ in~$N_q$.
Let $v$ be the element of
$\mathcal{L} ({\mathbb{Z}}_q) \cong \mathbb{C}^q$
defined in~(\ref{equ2.1})
and let ${\overline{\alpha}}$ be the automorphism of $N_q$ given by
\begin{equation}\label{Eq:AfTd1}
{\overline{\alpha}} ({\overline{\ep}}_{1} (f))
 = {\overline{\ep}}_{2} (f),
\quad
{\overline{\alpha}} ({\overline{\ep}}_{2} (f))
 = {\overline{\ep}}_{3} (f),
\quad
\ldots,
\quad
{\overline{\alpha}} ({\overline{\ep}}_{q - 1} (f))
 = {\overline{\ep}}_{q} (f),
\end{equation}
\begin{equation}\label{Eq:AfTd2}
{\overline{\alpha}} ({\overline{\ep}}_{q} (f))
 = \Ad (v) ({\overline{\ep}}_{1} (f))
\end{equation}
for all $f \in L^{\infty} ([0, 1])$,
and
\begin{equation}\label{Eq:AfTd3}
{\overline{\alpha}} (v) = e^{ - 2 \pi i / q} v.
\end{equation}
Thus ${\overline{\alpha}}^q = \Ad (v)$.
Then $N_q$ is the weak operator closure
of the image of $A_q$ under the Gelfand-Naimark-Segal
representation coming from the unique tracial state on $A_q$
(see Remark~\ref{R:2.2}),
and ${\overline{\alpha}} \in \Aut (N_q)$
is an extension of the automorphism $\alpha$
defined in (\ref{Eq:Af1}), (\ref{Eq:Af2}), and~(\ref{Eq:Af3}).
Identify $B_q$ with $C^* (g, u_0, u_1, u_2, \ldots )$
as in Lemma \ref{UHFmodel}(\ref{I_modelBq}).
Let $\om$ be the unique tracial state on $B_q$,
and let $R_0$ be the weak operator closure $\pi_{\om} (B_q)''$.
Then $R_0$ is isomorphic to the hyperfinite ${\mathrm{II}}_1$
factor~$R$.
Denote by ${\overline{\beta}}$ the extension to $R_0$ of the
automorphism $\beta$ of Lemma~\ref{L_3702_ExBt}.
Then, with $g \in B_q \subseteq \pi_{\om} (B_q)''$ being as in
Lemma~\ref{UHFmodel},
from Lemma \ref{L_3702_ExBt}(\ref{I_2816_btq}) and~(\ref{I_2816_bt_g}) we get
$\overline{\beta}^q = \Ad (g)$ and $\overline{\beta} (g) = e^{2 \pi i/q} g$.
It is well known, and easy to see, that these relations imply that
${\overline{\beta}}$ has period $q$ in ${\operatorname{Out}} (R_0)$.

Let $w$ be as in Lemma~\ref{R:2.5}.
The automorphism
${\overline{\gamma}}
 = \Ad (w) \circ
   \big( {\overline{\alpha}} \otimes {\overline{\beta}} \big)$
generates an action, which we also call ${\overline{\gamma}}$,
of $\Z_q$ on $N_q {\overline{\otimes}} R_0$.
Since ${\overline{\beta}}$ has period $q$ in ${\operatorname{Out}} (R_0)$,
Corollary 1.14 in~\cite{Kll} (or Theorem 13.1.16 in \cite{KR})
implies that ${\overline{\gm}}^k$ is outer for $k = 1, 2, \ldots, q - 1$.
By Proposition 13.1.5(ii) of~\cite{KR}, the crossed product
\begin{equation}\label{vNAcrossedproduct}
T_q = (N_q {\overline{\otimes}} R_0) \rtimes_{{\overline{\gamma}}} \Z_q
\end{equation}
is a factor of type~${\mathrm{II}}_1$.

\begin{remark}\label{hypercentral}
Let $G$ be a discrete group containing a nonabelian
free group and such that its von Neumann algebra $\mathcal{L} (G)$
is a factor.
Let $R$ be the hyperfinite $\mathrm{II}_1$ factor.
By the proof of Proposition 3.5 in \cite{Viola},
any central sequence in $\mathcal{L} (G) \overline{\otimes} R$ has the form
$(1 \otimes x_n)_{n \in \N}+ (y_n)_{n \in \N}$
for a central sequence $(x_n)_{n \in \N}$  in~$R$
and a sequence
$(y_n)_{n \in \N}$ in $\mathcal{L} (G) \overline{\otimes} R$
such that $\displaystyle{\lim_{n \to \infty} \| y_n \|_2 = 0}$.
Moreover, $\mathcal{L} (G) \overline{\otimes} R$
has no nontrivial hypercentral sequences, that is,
any hypercentral sequence in $\mathcal{L} (G) \overline{\otimes} R$ has the
form $(\lambda_n \cdot 1)_{n \in \N} + (y_n)_{n \in \N}$
for a sequence  $(\lambda_n)_{n \in \N}$ in
$\mathbb{C}$ and a sequence
$(y_n)_{n \in \N}$ in $\mathcal{L} (G) \overline{\otimes} R$
such that $\displaystyle{\lim_{n \to \infty} \| y_n \|_2 = 0}$.
\end{remark}

Using the Connes exact sequence in (\ref{Connessequence}),
we now compute the Connes invariant of $T_q$.
The argument follows Section 5 in \cite{Viola}, with suitable changes.
We reproduce it here for the convenience of the reader.

\begin{proposition}\label{ConnesinvariantM}
Let $q$ be an odd prime and let
$T_q
 = (N_q {\overline{\otimes}} R_0) \rtimes_{{\overline{\gamma}}} \Z_q$
be the $\mathrm{II}_1$ factor defined in (\ref{vNAcrossedproduct}).
Then $\chi (T_q) \cong \mathbb{Z}_{q^2}$.
Moreover, the unique subgroup
of order~$q$ in $\chi (T_q)$ is the image of the action
$\sm \colon {\widehat{\Z}_q} \to \Aut (T_q)$
obtained as the dual action on
$T_q =
  (N_q {\overline{\otimes}} R_0) \rtimes_{{\overline{\gamma}}} \Z_q$.
\end{proposition}

\begin{proof}
Denote by ${\mathbb{F}}_q$ the free group on $q$~generators.
For the first part of the proof, we use the facts and notation in the
discussion after Lemma~\ref{L_3705_Q}.
By abuse of notation, we write $\Z_q$ for the image of $\Z_q$
in $\Aut (N_q {\overline{\otimes}} R_0)$ under~${\overline{\gamma}}$.
Since $N_q \cong {\mathcal{L}} ({\mathbb{F}}_q \star {\mathbb{Z}}_q)$
is full by Lemma 3.2 in \cite{Viola} and
$\overline{\alpha} \notin \Inn (N_q)$, Corollary 3.3 in
\cite{Connes4} implies that
$\mathbb{Z}_q \cap \overline{\Inn (N_q {\overline{\otimes}} R_0)}
 = \{ 1 \}$.
Moreover, by Remark \ref{hypercentral}, the $\mathrm{II}_1$
factor $N_q {\overline{\otimes}} R_0$ has no nontrivial
hypercentral sequences.

To compute the Connes invariant
of $T_q$, we first compute the subgroups
$K^{\perp}$ and $L$ introduced after Remark~\ref{R:2.11}.

By Remark \ref{hypercentral}, any central sequence in
$N_q {\overline{\otimes}} R_0$ has the form
$(1 \otimes x_n)_{n \in \N} + (y_n)_{n \in \N}$
for a central sequence $(x_n)_{n \in \N}$ in~$R_0$ and
a sequence $(y_n)_{n \in \N}$ in $N_q {\overline{\otimes}} R_0$
such that $\displaystyle{\lim_{n \to \infty} \| y_n \|_2 = 0}$.
Since the trace on $N_q {\overline{\otimes}} R_0$ is unique,
it is $\gamma$-invariant, so also
$\displaystyle{\lim_{n \to \infty} \| \overline{\gamma} (y_n) \|_2 = 0}$.
Therefore $\overline{\gamma} \in \Ct (N_q {\overline{\otimes}} R_0)$
if and only if $\overline{\beta} \in \Ct (R_0)$.
Since $\Ct (R_0) = \Inn (R_0)$ by Theorem 3.2(1) in~\cite{Connes3},
and $\overline{\beta}$ is outer, it follows that
$K = \mathbb{Z}_q \cap \Ct (N_q {\overline{\otimes}} R_0)$ is trivial.
Thus $K^{\perp} \cong \mathbb{Z}_q$.

We next compute~$L$.
Using the notation of Lemma~\ref{UHFmodel},
we have $R_0 = \{g, u_0, u_1, u_2, \ldots \}''$ and
$\overline{\beta} = \Ad (u_{-1})$.
Let
$\xi \colon \Aut (N_q {\overline{\otimes}} R_0) \to
       \Out (N_q {\overline{\otimes}} R_0)$
denote the quotient map. We claim that $L \cong \mathbb{Z}_q$
and that a generator of $L$ is given by
\[
\mu = \xi (\id \otimes [\Ad (u_0^*) \circ \overline{\beta}]).
\]

We prove the claim.
Using $(\alpha \otimes \beta) (w) = w$
(by Lemma \ref{R:2.5}(\ref{I_2813_R_2_5_Fix})) at the first step,
\[
\begin{split}
\overline{\gamma} \,^{-1}
   \circ (\id \otimes [\Ad (u_0^*) \circ \overline{\beta}])
& = \Ad (w^*) \circ (\overline{\alpha} \,^{-1}
         \otimes \overline{\beta} \,^{-1})
         \circ (\id \otimes [\Ad (u_0^*) \circ \overline{\beta}])
\\
& = \Ad (w^* (1 \otimes u_0^*)) \circ (\overline{\alpha}  \,^{-1}\otimes \id).
\end{split}
\]
By Remark~\ref{hypercentral}, this automorphism
is in $\Ct (N_q {\overline{\otimes}} R_0)$, so that
\[
\id \otimes [\Ad (u_0^*) \circ \overline{\beta}]\in \mathbb{Z}_q \vee
       \Ct (N_q {\overline{\otimes}} R_0).
\]
For $n \in \N$ define $y_n = u_0 u_1^* u_2 u_3^* \cdots u_{2 n}$,
which is a unitary in~$R_0$.
Using Lemma \ref{UHFmodel}(\ref{I_2811_vkg_sm}) and~(\ref{I_2811_vkg_lg}),
we get $y_n g y_n^* = e^{2 \pi i / q} g$.
Using Lemma \ref{UHFmodel}(\ref{I_modelB_vk_vkp1}) and~(\ref{I_modelB_ujvk}),
we get
\[
y_n u_k y_n^*
 = \begin{cases}
   e^{- 2 \pi i / q^2} u_k  & \hspace*{1em} k = 0
        \\
   u_k & \hspace*{1em} k = 1, 2, \ldots, 2 n - 1
       \\
   e^{2 \pi i / q^2} u_k & \hspace*{1em} k = 2 n
        \\
   e^{- 2 \pi i / q^2} u_k & \hspace*{1em} k = 2 n + 1
       \\
   u_k & \hspace*{1em} k = 2 n + 2, \, 2 n + 3, \ldots.
\end{cases}
\]
(The calculation for $k = 1, 2, \ldots, 2 n - 1$ depends on
the parity of~$k$.)
Comparing these formulas with Lemma~\ref{L_3702_ExBt},
we see that $\displaystyle{\lim_{n \to \I} y_n x y_n^* = \bt (x)}$
for all $x \in \{ g, u_0, u_1, u_2, \ldots \}$,
and hence for all $x \in B_q$.
Therefore
$\displaystyle{\lim_{n \to \infty}
    \| \Ad (y_n) (x) - \overline{\beta} (x) \|_2 = 0}$
for all $x \in R_0$.
Also, Lemma \ref{L_3702_ExBt}(\ref{I_2816_u0}) and~(\ref{I_2816_oth_un})
imply $\overline{\beta} (y_n) = e^{-2 \pi i / q^2} y_n$ for $n \in \N$.
Set $z_n = u_0^* y_n$.
Then
\[
\id \otimes [\Ad (u_0^*) \circ \overline{\beta}]
 = \lim_{n \to \infty} \Ad (1 \otimes z_n)
\andeqn
\overline{\beta} (z_n) = z_n.
\]
It follows (recalling $H$ from~(\ref{Eq_2815_H})) that
\[
\id \otimes [\Ad (u_0^*) \circ \overline{\beta}]
 \in (\mathbb{Z}_q \vee
       \Ct (N_q {\overline{\otimes}} R_0)) \cap \overline{H}.
\]
So $\mu = \xi (\id \otimes [\Ad (u_0^*) \circ \overline{\beta}]) \in L$.

To finish the proof of the claim, since $\mu$ has order~$q$,
we show that $\mu$ generates $L$.
Given an automorphism
$\varphi \in
 (\mathbb{Z}_q \vee \Ct (N_q {\overline{\otimes}} R_0))
         \cap \overline{H}$,
there exists $k \in \{0, 1, \ldots , q - 1 \}$ such that
$\varphi \circ \overline{\gamma}^k$ is centrally trivial.
By the same argument as used to prove Proposition~3.6 in \cite{Viola},
there are $\nu \in \Aut (N_q)$
and a unitary $z \in N_q {\overline{\otimes}} R_0$ such that
$\varphi \circ \overline{\gamma}^k = \Ad (z) \circ (\nu \otimes \id)$.
Thus, with $x = z (\nu \otimes \id) ((w^*)^k)$, we have
\[
\varphi
 = \Ad (z) \circ (\nu \otimes \id)
   \circ \Ad (w^*)^k
   \circ (\overline{\alpha} \,^{-k}\otimes \overline{\beta} \,^{-k})
 = \Ad (x) \circ ([\nu \circ \overline{\alpha} \,^{-k}]
              \otimes \overline{\beta} \,^{-k}).
\]
Since
$\varphi
 \in \overline{H}
 \subseteq \overline{\Inn (N_q {\overline{\otimes}} R_0)}$
and $N_q$ is full, Corollary 3.3 in \cite{Connes4} implies that
$\nu \circ \overline{\alpha} \,^{-k} \in \Inn (N_q)$, so that
$\nu \circ \overline{\alpha} \,^{-k} = \Ad (c)$
for some unitary $c \in N_q$.
Thus
$\varphi
= \Ad (x (c \otimes 1)) \circ (\id \otimes \overline{\beta} \,^{-k})$
differs from a power of
$\id \otimes [\Ad (u_0^*) \circ \overline{\beta}]$
only by an inner automorphism.
This concludes the proof of the claim.

Next consider the Connes short exact sequence~(\ref{Connessequence}):
\begin{equation*}
\{ 1 \} \longrightarrow \mathbb{Z}_q
        \overset{\partial}{\longrightarrow} \chi (T_q)
        \overset{\Pi}{\longrightarrow} \mathbb{Z}_q
        \longrightarrow \{ 1 \}.
\end{equation*}
Taking
$\mu = \xi (\id \otimes [\Ad (u_0^*) \circ \overline{\beta}])$,
let $\et \in \Ct (T_q) \cap \overline{\Inn (T_q)}$
be the automorphism defined in (\ref{betamu}).
Let $\widetilde{\xi}\colon\Aut (T_q) \to \Out (T_q)$ 
denote the quotient map. Since the only possibilities for $\chi (T_q)$ are
$\mathbb{Z}_{q^2}$ and $\mathbb{Z}_q \oplus \mathbb{Z}_q$,
to complete the proof it is enough to show that $\widetilde{\xi} (\et)$
does not have order $q$, that is, $\et^q \notin \Inn (T_q)$.

By Lemma \ref{L_3702_ExBt}(\ref{I_2816_bt_g}) and~(\ref{I_2816_u0}), we have
\[
(\overline{\alpha} \otimes \overline{\beta}) (1 \otimes (u_0^*)^q g)
= e^{4 \pi i/q} (1 \otimes (u_0^*)^q g).
\]
Lemma \ref{R:2.5}(\ref{I_2813_R_2_5_Ct}) gives
$\Ad (w) (1 \otimes (u_0^*)^q g) = 1 \otimes (u_0^*)^q g$.
Hence,
$\overline{\gamma} (1 \otimes (u_0^*)^q g)
  = e^{4 \pi i / q} (1 \otimes (u_0^*)^q g)$.
Let $y$ be the standard unitary (of order $q$) implementing
the generating automorphism
$\Ad (w) \circ
 \big( {\overline{\alpha}} \otimes {\overline{\beta}} \big)$
of the action ${\overline{\gamma}}$ in the crossed product
$T_q =
 (N_q {\overline{\otimes}} R_0) \rtimes_{{\overline{\gamma}}} \Z_q$,
so that
$\Ad (w) \circ
   \big( {\overline{\alpha}} \otimes {\overline{\beta}} \big)
 = \Ad (y)$.
Then
\[
\Ad (1 \otimes (u_0^*)^q g) (y) = e^{- 4 \pi i / q} y.
\]
Recall from Lemma \ref{L_3702_ExBt}(\ref{I_2816_u0}) and~(\ref{I_2816_btq})
that $\overline{\beta} (u_0) = e^{-2 \pi i/q^2} u_0$
(which implies that $\overline{\beta}$ commutes with $\Ad (u_0^*)$)
and $\overline{\beta}^q = \Ad (g)$.
For $a_0, a_1, \ldots, a_{q - 1} \in N_q {\overline{\otimes}} R_0$,
it follows that
\begin{align*}
\et^q \left( \sum_{k = 0}^{q - 1} a_k y^k \right)
& = \sum_{k = 0}^{q - 1}
    (1 \otimes [\Ad (u_0^*)^q \circ \overline{\beta}^q]) (a_k) y^k
  = \sum_{k = 0}^{q - 1} (1 \otimes \Ad ((u_0^*)^q g)) (a_k)y^k
\\
& = \Ad (1 \otimes (u_0^*)^q g)
   \left( \sum_{k = 0}^{q - 1} e^{4 \pi i k / q} a_k y^k \right).
\end{align*}
Since $q$ is odd, we conclude that, up to an inner automorphism,
$\et^q$ is the dual action at a nontrivial element of the dual group.
Therefore $\et^q$ is outer.
\end{proof}

Using the Connes invariant, we can now show that the
\ca{} $D_q$ is not isomorphic to its opposite algebra
whenever $q$ is an odd prime such that $- 1$ is not a
square mod~$q$.

\begin{proposition}\label{P_3703_vNNotIsoOpp}
Let $q$ be any odd prime
such that $- 1$ is not a square mod~$q$.
Let $D_q = C_q \rtimes_{\gamma} {\mathbb{Z}}_q$
be the \ca{}
of Definition~\ref{D:2.4}.
Let $\ta$ be the unique tracial state on~$D_q$
(Proposition~\ref{pcaseproposition}),
and let $\pi_{\tau}$ be the Gelfand-Naimark-Segal representation
associated to~$\ta$.
Then the von Neumann algebra $\pi_{\tau} (D_q)''$
is not isomorphic to its opposite algebra.
\end{proposition}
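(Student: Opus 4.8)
The plan is to distinguish $M=\pi_{\ta}(D_q)''$ from $M^{\mathrm{op}}$ by computing the Connes invariant $\ch(M)$ together with its obstruction-to-lifting data, and showing that this refined invariant is not preserved under complex conjugation unless $-1$ is a square mod~$q$. First I would identify the von Neumann algebra: since $D_q=C_q\rtimes_{\gm}\Z_q$ and $\ta$ is the unique (hence $\gm$-invariant) tracial state, $M$ is the von Neumann crossed product $N\rtimes_{\bar\gm}\Z_q$, where $N=\pi_{\ta}(C_q)''$ and $\bar\gm$ is the normal extension of~$\gm$. By Lemma~\ref{L_3705_Q}, $N\cong L\,\overline{\otimes}\,R$, where $L=\pi_{\rh}(A_q)''$ for the unique trace $\rh$ on $A_q$ and $R$ is the hyperfinite ${\mathrm{II}}_1$ factor obtained as the weak closure of the UHF algebra~$B_q$. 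Because $A_q$ is a reduced free product, $L$ is a full factor with no nontrivial central sequences, so every central sequence of $N$ lives in the $R$-tensor factor and $N$ has no nontrivial hypercentral sequences. Since $D_q$ absorbs $B_q$ (Proposition~\ref{pcaseproposition}), we have $M\cong M\,\overline{\otimes}\,R$, so Remark~\ref{R:2.11} applies and $\ch(M)$ is the image of $\operatorname{Ct}(M)\cap\overline{\operatorname{Inn}(M)}$.

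Next I would feed $N$ and $G=\Z_q=\langle\bar\gm\rangle$ into the Connes short exact sequence~(\ref{Connessequence}). The Rokhlin property of $\gm$ (Lemma~\ref{R:2.5}) gives $G\cap\overline{\operatorname{Inn}(N)}=\{1\}$. On a central sequence the inner part $\Ad(w)$ of $\gm$ acts trivially while $\af\otimes\bt$ acts through the shift $\bt$ on~$R$, which is not centrally trivial; hence $K=G\cap\operatorname{Ct}(N)=\{1\}$ and $K^{\bot}\cong\widehat{\Z_q}\cong\Z_q$. After checking that the term $L$ in~(\ref{Connessequence}) is trivial, this yields $\ch(M)\cong\Z_q$, generated by the class $[\te]$ of an automorphism $\te\in\operatorname{Ct}(M)\cap\overline{\operatorname{Inn}(M)}$ produced by the eigen-operator construction of the map~$\partial$.

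The engine of the argument is then the obstruction to lifting (Definition~\ref{D:2.12}) of this generator. Using $\af(u_0)=e^{-2\pi i/q}u_0$ and $\bt(v)=e^{2\pi i/q}v$ (Lemmas~\ref{L_3702_ExAf} and~\ref{L_3702_ExBt}), together with the $\gm$-fixed unitary $w$ with $w^q=u_0\otimes v$ (Lemma~\ref{L_3702_ExistW}), I would show that $\te^q$ is inner and that $\operatorname{obs}(\te)=\zt$ is a primitive $q$-th root of unity. The elementary computation that if $\te^q=\Ad(u)$ and $\te(u)=\zt u$ then $\te^k(u^k)=\zt^{k^2}u^k$ shows that the obstruction-to-lifting, viewed as a function $Q\colon\ch(M)\cong\Z_q\to S^1$ given by $Q(k)=\zt^{k^2}$, is a nondegenerate quadratic form. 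Passing to the opposite by Lemma~\ref{L_3704_OppTr}, $M^{\mathrm{op}}\cong\pi_{\ta^{\mathrm{op}}}(D_q^{\mathrm{op}})''$; since taking opposites replaces $\Ad(u)$ by $\Ad(u^*)$ and each eigenvalue by its conjugate, the quadratic form of $M^{\mathrm{op}}$ is $\overline{Q}(k)=\zt^{-k^2}$. An isomorphism $M\cong M^{\mathrm{op}}$ would carry $\ch(M)$ onto $\ch(M^{\mathrm{op}})$ and intertwine the two forms, hence induce a group automorphism $k\mapsto ck$ of $\Z_q$ with $\zt^{c^2k^2}=\zt^{-k^2}$ for all~$k$; as $\zt$ is primitive this forces $c^2\equiv-1\pmod q$. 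Since $-1$ is not a square mod~$q$ for the odd primes in question (and no unit $c$ satisfies $c^2\equiv-1\pmod 4$ when $q=4$), no such $c$ exists, and $M\not\cong M^{\mathrm{op}}$.

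The hard part will be the computation underlying the third paragraph: showing that the canonical generator of $\ch(M)$ really carries a primitive obstruction to lifting, that is, reconciling the eigenvalue data of $\af$, $\bt$ and $w$ with the map $\partial$ of the Connes sequence so that the quadratic form $Q$ is genuinely attached to $\ch(M)$ as an isomorphism invariant. The supporting facts---that $L$ is a full factor, that $N$ has no nontrivial hypercentral sequences, that $K=\{1\}$, and that the $L$-term of~(\ref{Connessequence}) vanishes---are routine given the free-product structure of $A_q$ and the Rokhlin property of~$\gm$, but they must all be verified in order for the Connes sequence to yield $\ch(M)\cong\Z_q$ together with the quadratic obstruction form on the nose.
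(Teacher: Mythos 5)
Your setup---the identification of $M$ as $(N \,{\overline{\otimes}}\, R) \rtimes_{\overline{\gamma}} \Z_q$, the fullness of the free product factor, the absence of nontrivial hypercentral sequences, the fact that $K = G \cap \operatorname{Ct}(N \,{\overline{\otimes}}\, R) = \{1\}$ so that $K^{\bot} \cong \Z_q$, and the closing arithmetic $c^2 \equiv -1 \pmod{q}$---all match the paper. But the mechanism by which you extract the quadratic form fails on two counts. First, the $L$-term of the Connes sequence is \emph{not} trivial: $\overline{\alpha} \otimes \id_R$ is centrally trivial (every central sequence of $N \,{\overline{\otimes}}\, R$ lives in the $R$-factor because $N$ is full), and it contributes to $(G \vee \operatorname{Ct}(N \,{\overline{\otimes}}\, R)) \cap \overline{H}$; the sequence is $0 \to \Z_q \to \ch(M) \to \Z_q \to 0$ and $\ch(M) \cong \Z_{q^2}$, not $\Z_q$. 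Second, and fatally for your plan, the subgroup of $\ch(M)$ coming from $\partial$ is generated by the class of a dual-action automorphism, which satisfies $\sigma_l^{\,q} = \id_M$ exactly; since the obstruction to lifting depends only on the class in $\operatorname{Out}(M)$, its obstruction is $1$, not a primitive $q$-th root of unity. The quadratic form you propose to attach to $\ch(M)$ is identically trivial on the only subgroup your argument produces, so it cannot distinguish $M$ from $M^{\mathrm{op}}$.

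The eigenvalue $e^{-2\pi i/q}$ only becomes visible one level deeper, and that extra level is the real content of the paper's proof. One lifts the unique order-$q$ subgroup of $\ch(M) \cong \Z_{q^2}$ to an action $\rh$ of $\Z_q$ on $M$ (well defined up to the cocycle perturbation controlled by Lemma~\ref{L:Cocycle}), forms $M \rtimes_{\rh} \Z_q$, identifies it with $(N \,{\overline{\otimes}}\, R) \otimes B(l^2(\Z_q))$ by Takesaki duality, and factors each nontrivial dual automorphism as $\ph \circ \ps$ with $\ph$ approximately inner and $\ps$ centrally trivial; the factorization is unique up to inner automorphisms because $N$ is full. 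The invariant is the set of obstructions to lifting of the powers $\ps^l$ of the centrally trivial factor $\ps = (\overline{\alpha} \otimes \id_R) \otimes \id$, for which $\ps^q = \Ad(u_0 \otimes 1 \otimes 1)$ and $\ps(u_0 \otimes 1 \otimes 1) = e^{-2\pi i/q}\,(u_0 \otimes 1 \otimes 1)$, yielding $\{-l^2 \colon l \in \Z_q \setminus \{0\}\}$ for $M$ versus $\{l^2 \colon l \in \Z_q \setminus \{0\}\}$ for $M^{\mathrm{op}}$. Your computation $\te^k(u^k) = \zt^{k^2}u^k$ is exactly the right arithmetic, but it has to be applied to this centrally trivial factor in the second crossed product, not to a generator of $\ch(M)$ itself; and one must additionally check that the whole recipe (choice of lift of the order-$q$ subgroup, of the factorization $\ph \circ \ps$, and of the implementing unitary) is independent of all choices before concluding that an isomorphism $M \cong M^{\mathrm{op}}$ would preserve it.
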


\begin{proof}
We claim that $T_q \cong \pi_{\tau} (D_q)''$.
Let $\sm$ be the unique tracial state on~$A_q$ (Remark~\ref{R:2.2}),
and let $\om$ be the unique tracial state on the UHF algebra~$B_q$.
We have an obvious map
$C_q = A_q \otimes B_q \to N_q {\overline{\otimes}} R_0$,
which intertwines $\gm$ and ${\overline{\gm}}$.
Lemma~\ref{L_3705_Q} shows that this map induces
an isomorphism
$\pi_{\sm \otimes \om} (C_q)'' \cong N_q {\overline{\otimes}} R_0$.
Since the group is finite,
taking crossed products by~$\Z_q$
gives an isomorphism $T_q \cong \pi_{\tau} (D_q)''$, as claimed.

To show that $T_q$ is not isomorphic to its
opposite algebra, we give a recipe which starts with a factor~$P$,
just given as a factor of type~${\mathrm{II}}_1$
with certain properties
(see (\ref{InvP1}), (\ref{InvP2}), (\ref{InvP3}), and~(\ref{InvP4})
below),
and produces a subset $S_q (P)$ of $\Z_q$,
which we identify with $\{ 0, 1, \ldots, q - 1 \}$.
The important point is that this recipe does not depend
on knowing any particular element, automorphism, etc.\  of~$P$.
That is, if we start with some other factor of type~${\mathrm{II}}_1$
which is isomorphic to~$P$,
then we get the same subset of $\{ 0, 1, \ldots, q - 1 \}$,
regardless of the choice of isomorphism.
When $- 1$ is not a square mod~$q$,
we will show that the recipe also applies to $P^{\mathrm{op}}$
and gives a different subset,
from which it will follow that
$T_q^{\mathrm{op}} \not\cong T_q$.

We describe the construction first,
postponing the proofs that the steps can be carried out
and the result is independent of the choices made.
Let $P$ be a factor of type ${\mathrm{II}}_1$ with separable predual.
Let $\chi (P)$ denote the Connes invariant of~$P$
as in Definition~\ref{D:2.10},
and assume that $P$ satisfies the following properties:
\begin{enumerate}
\item\label{InvP1}
$\ch (P) \cong \Z_{q^2}$.
\item\label{InvP2}
The unique subgroup of $\ch (P)$ of order~$q$
is the image of a subgroup
(not necessarily unique) of $\Aut (P)$ isomorphic to~$\Z_q$.
\item\label{InvP3}
Let $\rh \colon \Z_q \to \Aut (P)$ come from a choice
of the subgroup and isomorphism in~(\ref{InvP2}).
Form the crossed product $P \rtimes_{\rh} \Z_q$,
and let
${\widehat{\rh}} \colon
  {\widehat{\Z_q}} \to \Aut (P \rtimes_{\rh} \Z_q)$
be the dual action.
Then for every nontrivial element
$l \in {\widehat{\Z_q}}$,
the automorphism ${\widehat{\rh}}_l \in \Aut (P \rtimes_{\rh} \Z_q)$
has a factorization $\ph \circ \ps$, in which $\ph$ is
an approximately inner automorphism and $\ps$
is a centrally trivial automorphism.
\item\label{InvP4}
For any nontrivial element
$l \in {\widehat{\Z_q}}$ and any factorization
${\widehat{\rh}}_l = \ph \circ \ps$ as in~(\ref{InvP3}),
there is a unitary $z \in P \rtimes_{\rh} \Z_q$ such that
$\ps^q = \Ad (z)$,
and there is $k \in \{ 0, 1, \ldots, q - 1 \}$
such that $\ps (z) = e^{2 \pi i k / q} z$.
(See the obstruction to lifting of Definition~\ref{D:2.12}.)
\end{enumerate}

For a type ${\mathrm{II}}_1$~factor $P$ which satisfies
(\ref{InvP1}), (\ref{InvP2}), (\ref{InvP3}), and~(\ref{InvP4}),
we take $S_q (P)$ to be
the set of all values of~$k \in \{ 0, 1, \ldots, q - 1 \}$
which appear in~(\ref{InvP4})
for any choice of the action $\rh \colon \Z_q \to \Aut (P)$,
any nontrivial element $l \in {\widehat{\Z_q}}$,
and any choice of the factorization
${\widehat{\rh}}_l = \ph \circ \ps$ as in~(\ref{InvP3}).
We think of $S_q (P)$ as a subset of $\Z_q$ in the obvious way.

We claim that the crossed product $P \rtimes_{\rh} \Z_q$
is uniquely determined up to isomorphism
and the dual action
${\widehat{\rh}}
  \colon {\widehat{\Z_q}} \to \Aut (P \rtimes_{\rh} \Z_q)$
is uniquely determined up to conjugacy and automorphisms of~$\Z_q$.
There are two ambiguities in the choice of~$\rh$.
If we change the isomorphism of
$\Z_q$ with the subgroup of $\ch (P)$ of order~$q$,
we are modifying $\rh$ by an automorphism of $\Z_q$.
The crossed product $M \rtimes_{\rh} \Z_q$ is the same,
and the dual action is modified by the corresponding
automorphism of ${\widehat{\Z_q}}$.
Now suppose that we fix an isomorphism of $\Z_q$
with the subgroup of $\ch (P)$ of order~$q$,
but choose a different lift ${\overline{\rh}}$
to a \hm\   $\Z_q \to \Aut (P)$.
Then Lemma~\ref{L:Cocycle}
implies that the crossed products are isomorphic
and the dual actions are conjugate.
This proves the claim.

Since if the dual action changes by
conjugation by an automorphism, the automorphisms
in the decomposition of~(\ref{InvP3}) also change
by conjugation by an automorphism, and the obstruction
to lifting is unchanged by conjugation,
it follows that changing
the dual action by conjugation leaves $S_q (P)$ invariant.
This shows that $S_q (P)$ can be computed
by fixing a particular choice of
$\rh \colon {\mathbb{Z}}_q \to {\operatorname{Aut}} (P)$.

Next we check that
if $P$ satisfies
(\ref{InvP1}), (\ref{InvP2}), (\ref{InvP3}), and~(\ref{InvP4}),
then so does $P^{\mathrm{op}}$.
For this purpose,
we use the von Neumann algebra $P^{\mathrm{c}}$
described in Definition~\ref{D:2.6}, which is isomorphic to
$P^{\mathrm{op}}$ by Remark~\ref{R:2.7}.
Scalar multiplication enters in the definition of $S_q (P)$
in only two places.
The first is the definition of the dual action
${\widehat{\rh}} \colon
 {\widehat{\Z_q}} \to \Aut (P \rtimes_{\rh} \Z_q)$.
However, the change is easily undone by applying
the automorphism $l \mapsto - l$ of ${\widehat{\Z_q}}$.
The other place is in the definition of the obstruction to lifting.
So $P^{\mathrm{c}}$ satisfies
the conditions
(\ref{InvP1}), (\ref{InvP2}), (\ref{InvP3}), and~(\ref{InvP4}),
and we get
\begin{equation}\label{Eq:ObstrOfPOp}
S_q ( P^{\mathrm{c}} ) = \{ - l \colon l \in S_q (P) \},
\end{equation}
where we are treating $S_q (\cdot)$ as a subset of~$\Z_q$.

In the rest of the proof,
we show that the ${\mathrm{II}}_1$~factor
$T_q
 = (N_q {\overline{\otimes}} R_0) \rtimes_{{\overline{\gamma}}} \Z_q$
satisfies
(\ref{InvP1}), (\ref{InvP2}), (\ref{InvP3}), and~(\ref{InvP4}),
and that moreover $S_q (T_q)$ can be computed
using, for each nontrivial element $l \in {\widehat{\Z_q}}$, just
one choice of the factorization
${\widehat{\rh}}_l = \ph \circ \ps$ in~(\ref{InvP3})
and one choice of the unitary $z$ in~(\ref{InvP4}).
We then finish by computing $S_q (T_q )$.

By Proposition \ref{ConnesinvariantM} we have
$\chi ( T_q) \cong \mathbb{Z}_{q^2}$, and
the unique subgroup
of order~$q$ in $\chi (T_q)$ is the image of the action
$\sm \colon {\widehat{\Z}_q} \to \Aut (T_q)$
obtained as the dual action on
$T_q =
  (N_q {\overline{\otimes}} R_0) \rtimes_{{\overline{\gamma}}} \Z_q$.
Thus, there exists at least one choice for~$\rh$,
namely $\sm$ composed with some isomorphism
$\Z_q \to {\widehat{\Z_q}}$.
Therefore $T_q$ satisfies property~(\ref{InvP2}).

By Takesaki's duality theory (see Theorem~4.5 of~\cite{Take}),
with $\lambda (g)$ denoting the left regular representation of
${\mathbb{Z}}_q$ on $l^2 ({\mathbb{Z}}_q)$,
there is an isomorphism
\[
T_q \rtimes_{\rh} \mathbb{Z}_q
 \cong (N_q {\overline{\otimes}} R_0) \otimes B (l^2 ({\mathbb{Z}}_q))
\]
which identifies $g \mapsto {\widehat{\rh}}_g$ with the tensor
product $g \mapsto {\overline{\gamma}}_g \otimes \Ad (\lambda (g)^*)$.

Now let $l \in {\widehat{\Z_q}}$.
We claim that ${\widehat{\rh}}_l$
can be written as $\ph \circ \ps$ for
an approximately inner automorphism $\ph$
and a centrally trivial automorphism~$\ps$,
and that this factorization is unique up to inner automorphisms.
This will imply property~(\ref{InvP3}).
We first consider uniqueness,
which is equivalent to showing that every automorphism
which is both approximately inner and centrally trivial is in fact
inner.
Since $N_q$ is full by Lemma 3.2 n \cite{Viola},
the decomposition described in the proof of Lemma~3.6 of~\cite{Viola}
can be used to show that every automorphism
of $N_q {\overline{\otimes}} R_0$
which is both approximately inner and centrally trivial is in fact
inner.
Uniqueness now follows because
$(N_q {\overline{\otimes}} R_0) \otimes B (l^2 ({\mathbb{Z}}_q))
 \cong N_q {\overline{\otimes}} R_0$.

For existence, since the approximately inner automorphisms
are a normal subgroup
of~$\Aut \big( T_q \rtimes_{\rh} \mathbb{Z}_q \big)$,
it suffices to take $l$ to be the standard generator
of~${\widehat{\Z_q}}$.
Equivalently,
consider ${\overline{\gamma}} \otimes \Ad (\lambda (1)^*)$.
We will take
\[
\ph = \big( \Ad (w)
        \circ \big( \id_{N_q} \otimes {\overline{\beta}} \big) \big)
      \otimes \Ad (\lambda (1)^*)
\andeqn
\ps = ({\overline{\alpha}} \otimes \id_{R_0})
      \otimes \id_{B (l^2 ({\mathbb{Z}}_q))}.
\]
It is clear that
${\overline{\gamma}} \otimes \Ad (\lambda (1)^*) = \ph \circ \ps$.
The automorphism $\ph$ is approximately inner because,
by construction, ${\overline{\beta}}$ is approximately inner.
(In fact, by Theorem XIV.2.16 of~\cite{Tk3}, every
automorphism of $R$ is approximately inner.)
To see that
$\ps$ is centrally trivial, we observe that by
Remark \ref{hypercentral} every central sequence in
$N_q {\overline{\otimes}} R_0$ has the form
$(1 \otimes x_n)_{n = 1}^{\infty} + (y_n)_{n = 1}^{\infty}$
for a central sequence
$(x_n)_{n = 1}^{\infty}$ in~$R_0$ and
a sequence $(y_n)_{n = 1}^{\infty}$ in $N_q {\overline{\otimes}} R_0$
such that $\displaystyle{\lim_{n \to \infty} \| y_n \|_2 = 0}$.
Since the trace on $N_q {\overline{\otimes}} R_0$ is unique,
it is $\ps$-invariant, so also
$\displaystyle{\lim_{n \to \infty} \| \ps (y_n) \|_2 = 0}$,
which implies that $\ps$ is centrally trivial.
This proves the claim.

The obstruction to lifting for $\ps$
(as in property~(\ref{InvP4})) is independent
of the choice of the unitary $z$ implementing $\ps^q$
because $T_q \rtimes_{\sm} {\widehat{{\mathbb{Z}}_q}}$
is a factor.
By the proof of Proposition~1.4
of~\cite{Connes3} it is unchanged if $\ps$ is replaced
by $\Ad (y) \circ \ps$ for any unitary
$y\in T_q \rtimes_{\sm} {\widehat{{\mathbb{Z}}_q}}$.
The centrally trivial factor in the decomposition
of any automorphism of
$T_q \rtimes_{\sm} {\widehat{{\mathbb{Z}}_q}}$,
in particular, of $({\overline{\gamma}} \otimes \Ad (\lambda (1)^*))^l$,
is determined up to inner automorphisms.
Since, moreover, $\ph$ and $\ps$ commute up to an inner automorphism,
it follows that we can compute $S_q (T_q)$
by simply computing the obstructions to lifting for all powers
$\ps^l$ for a fixed choice of $\ps$ and for
$l = 1, 2, \ldots, q - 1$.
We can take
\[
\ps = ({\overline{\alpha}} \otimes \id_{R_0})
 \otimes \id_{B (l^2 ({\mathbb{Z}}_q))},
\]
for which $z = v \otimes 1 \otimes 1$ has already been
shown to be a unitary
with $\ps^q = \Ad (z)$ and $\ps (z) = e^{ - 2 \pi i / q} z$.
Now one uses Equations
(\ref{Eq:AfTd1}), (\ref{Eq:AfTd2}), and~(\ref{Eq:AfTd3})
to check that
\[
(\ps^l)^q = \Ad (z^l)
\andeqn
\ps^l (z^l) = e^{ - 2 \pi i l^2 / q} z^l.
\]
Therefore
(identifying $\{0, 1, \ldots, q - 1 \}$ with $\Z_q$ in the usual way)
\[
S_q (T_q) = \big\{ - l^2 \colon l \in \Z_q \setminus \{ 0 \} \big\}.
\]
As observed in Equation~(\ref{Eq:ObstrOfPOp}) above,
$S_q (T_q^{\mathrm{op}})$
is then given by
\[
S_q (T_q^{\mathrm{op}}) =
\big\{ l^2 \colon l \in \Z_q \setminus \{ 0 \} \big\}.
\]
If $q$ is an odd prime such that $- 1$ is not a square mod~$q$,
we have
$S_q (T_q^{\mathrm{op}}) \neq S_q (T_q)$,
whence $T_q^{\mathrm{op}} \not\cong T_q$.
\end{proof}

\section{\ca{s} not isomorphic to their opposite
   algebras}\label{Sec_NotIsoOpp}

We use the results of Section~\ref{Sec_Main}
to produce a number of examples of simple exact \ca{s}
not isomorphic to their opposite algebras
and which satisfy
the Universal Coefficient Theorem.

\begin{lemma}\label{L_3705_AffHme}
Let $A$ and $B$ be unital \ca{s}
and assume that $B$ has a unique tracial state~$\tau$.
Then the map $\sm \mapsto \sigma \otimes \tau$
is an affine weak* homeomorphism from the tracial state
space $\T (A)$ of~$A$
to the tracial state space $\T (A \otimes B)$ of $A \otimes B$.
\end{lemma}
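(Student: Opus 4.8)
The plan is to establish the three required properties separately: that the map $\sm \mapsto \sm \otimes \ta$ lands in the tracial state space of $A \otimes B$, that it is an affine weak* continuous bijection, and that its inverse is also continuous. The affine and continuity properties in the forward direction are routine, so the real content lies in surjectivity, that is, in showing every tracial state on $A \otimes B$ has the form $\sm \otimes \ta$ for a unique tracial state $\sm$ on $A$.

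First I would verify that $\sm \otimes \ta$ is a well-defined tracial state. Since both $A$ and $B$ are unital, the tensor product $\sm \otimes \ta$ extends to a state on the minimal tensor product, and the trace property on simple tensors, $(\sm \otimes \ta)\big( (a_1 \otimes b_1)(a_2 \otimes b_2) \big) = \sm(a_1 a_2) \ta(b_1 b_2)$, follows from the trace properties of $\sm$ and $\ta$ on the dense subalgebra of finite sums of elementary tensors; continuity then gives traciality on all of $A \otimes B$. Affineness in $\sm$ is immediate from the definition, and weak* continuity follows by testing against elementary tensors $a \otimes b$, since $\sm \mapsto (\sm \otimes \ta)(a \otimes b) = \sm(a)\ta(b)$ is weak* continuous. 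Injectivity is clear by restricting $\sm \otimes \ta$ to $A \otimes 1 \cong A$.

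The main obstacle is surjectivity. Given a tracial state $\mu$ on $A \otimes B$, I would define $\sm \colon A \to \C$ by $\sm(a) = \mu(a \otimes 1)$ and $\mu_B \colon B \to \C$ by $\mu_B(b) = \mu(1 \otimes b)$; these are tracial states on $A$ and $B$ respectively. By uniqueness of the trace on $B$, we have $\mu_B = \ta$. The key step is then to show $\mu = \sm \otimes \ta$, for which it suffices to prove the factorization $\mu(a \otimes b) = \sm(a)\ta(b)$ on elementary tensors. The cleanest route is to exploit that $\ta$ is the unique trace on $B$: for fixed $a$ with $\sm(a) \neq 0$, consider the functional $b \mapsto \mu(a \otimes b)/\sm(a)$, or more robustly, fix a self-adjoint $a$ and define $\nu_a(b) = \mu(a \otimes b)$, observing that $\nu_a$ is a tracial functional on $B$ (using the trace property of $\mu$ and the commutation of $a \otimes 1$ with $1 \otimes b$ inside the product). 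Since the tracial functionals on a unital \ca{} with a unique tracial state form a one-dimensional space spanned by $\ta$, we get $\nu_a = \nu_a(1)\, \ta = \sm(a)\, \ta$, which is exactly the desired factorization. Decomposing a general $a$ into self-adjoint parts extends this to all of $A$, and density plus continuity finishes surjectivity.

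Finally, to see that the inverse map is weak* continuous, I would note that the tracial state space of $A \otimes B$ is weak* compact and the tracial state space of $A$ is weak* Hausdorff, so a continuous bijection between them is automatically a homeomorphism. This sidesteps any direct estimate on the inverse. The hardest part is the traciality and one-dimensionality argument in the surjectivity step; everything else is formal. I would present the uniqueness-of-trace reduction carefully, as it is the one place where the hypothesis on $B$ is genuinely used.
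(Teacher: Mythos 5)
Your proposal is correct and follows essentially the same route as the paper: surjectivity via the slice functionals $\nu_a(b) = \mu(a \otimes b)$, uniqueness of the trace on $B$ to force $\nu_a = \sm(a)\ta$, and compactness of the tracial state spaces to upgrade the continuous bijection to a homeomorphism. The only cosmetic difference is that the paper slices over \emph{positive} $a$, so that $\nu_a$ is a positive tracial functional and uniqueness of the tracial state applies directly, whereas your self-adjoint variant needs the slightly stronger fact that the hermitian tracial functionals form a one-dimensional space (which follows from traciality of the Jordan decomposition, but is worth a sentence if you take that route).
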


We can't use Proposition 6.12 in~\cite{CP},
because
(see Proposition 2.7 in~\cite{CP})
it assumes that $\T (A)$ is finite dimensional.

\begin{proof}
It is easy to check that the map $\sm \mapsto \sigma \otimes \tau$
is injective, by considering $(\sigma \otimes \tau) (a \otimes 1)$
for $a \in A$.

We prove surjectivity.
Let $\rh$ be a tracial state on $A \otimes B$.
Define a tracial state $\sm$ on $A$
by $\sm (a) = \rh (a \otimes 1)$ for $a \in A$.
We claim that $\sm \otimes \ta = \rh$.
It suffices to verify equality on $a \otimes b$
for $a \in A_{+}$ and $b \in B$.
So let $a \in A_{+}$.
Define a tracial positive linear functional
$\nu_a \colon B \to \C$
by $\nu_a (b) = \rh (a \otimes b)$
for $b \in B$.
By uniqueness of~$\ta$,
there is $\ld (a) \geq 0$
such that $\nu_a = \ld (a) \ta$.
Then
\[
\ld (a) = \nu_a (1) = \rh (a \otimes 1) = \sm (a).
\]
Thus for all $b \in B$,
we have
\[
(\sm \otimes \ta) (a \otimes b)
 = \ld (a) \ta (b)
 = \nu_a (b)
 = \rh (a \otimes b).
\]
This completes the proof of surjectivity,
and shows that the inverse map is given by
$\rh \mapsto \rh |_{A \otimes 1_B}$.

It is obvious that $\rh \mapsto \rh |_{A \otimes 1_B}$
is affine and is continuous for the weak* topologies.
Since both tracial state spaces are compact and Hausdorff,
it follows that $\sm \mapsto \sigma \otimes \tau$
is an affine homeomorphism.
\end{proof}

\begin{proposition}\label{P_3705_Main}
Let $q$ be any odd prime such that
$- 1$ is not a square mod~$q$.
Let $D_q$ be the \ca{}
of Definition~\ref{D:2.4}.
Let $E$ be a
simple separable unital nuclear stably finite \ca.
Then $E \otimes D_q$ is exact and
$E \otimes D_q \not\cong (E \otimes D_q)^{\mathrm{op}}$.
\end{proposition}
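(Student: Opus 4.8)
\emph{The plan} is to prove exactness directly and then obtain the non-isomorphism by descending to the von Neumann algebra level, where Proposition~\ref{P_3703_vNNotIsoOpp} does the real work. For exactness, $E$ is nuclear and hence exact, $D_q$ is exact by Proposition~\ref{pcaseproposition}, and the minimal tensor product of two exact C*-algebras is exact by Proposition~7.1(iii) of~\cite{Kr2}; since $E$ is nuclear there is no ambiguity in the tensor product. For the main assertion, I would argue by contradiction: assume there is an isomorphism $\Phi \colon E \otimes D_q \to (E \otimes D_q)^{\mathrm{op}}$ and manufacture from it an isomorphism $\pi_\tau(D_q)'' \cong [\pi_\tau(D_q)'']^{\mathrm{op}}$, contradicting Proposition~\ref{P_3703_vNNotIsoOpp}.

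Set $M = \pi_\tau(D_q)''$, where $\tau$ is the unique trace of $D_q$; recall that $M$ is a ${\mathrm{II}}_1$ factor and, since $D_q$ absorbs the UHF algebra $B_q$ (Proposition~\ref{pcaseproposition}), $M \cong M \overline{\otimes} R$. Since $E$ is simple, unital and stably finite, $T(E)$ is a nonempty compact convex set, so it has an extreme point $\sigma$, and then $\pi_\sigma(E)''$ is a factor. Because $D_q$ has a unique trace, Lemma~\ref{L_3705_AffHme} identifies $T(E \otimes D_q)$ with $T(E)$ via $\rho \mapsto \rho \otimes \tau$, so $\omega := \sigma \otimes \tau$ is extreme on $E \otimes D_q$. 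The state $\omega \circ \Phi^{-1}$ is a trace on $(E \otimes D_q)^{\mathrm{op}}$; writing it as $\eta^{\mathrm{op}}$ for a (necessarily product) trace $\eta = \sigma' \otimes \tau$ on $E \otimes D_q$, Lemma~\ref{L_3704_OppTr} together with the fact that $\Phi$ carries the GNS data of $\omega$ to that of $\omega \circ \Phi^{-1}$ gives $\pi_\omega(E \otimes D_q)'' \cong [\pi_\eta(E \otimes D_q)'']^{\mathrm{op}}$. The composite $\omega \mapsto \omega \circ \Phi^{-1} \mapsto \eta$ is an affine homeomorphism of $T(E \otimes D_q)$, so $\eta$, and hence $\sigma'$, is again extreme. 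Applying Lemma~\ref{L_3705_Q} to both sides then rewrites this as
\[
\pi_\sigma(E)'' \overline{\otimes} M
 \cong [\pi_{\sigma'}(E)'']^{\mathrm{op}} \overline{\otimes} M^{\mathrm{op}}.
\]

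The crucial step, and the one I expect to be \emph{the main obstacle}, is to control the factors coming from~$E$. Here nuclearity of $E$ is essential: $\pi_\sigma(E)''$ and $\pi_{\sigma'}(E)''$ are injective, and being finite factors with separable predual they are each either a matrix algebra $M_n$ or the hyperfinite ${\mathrm{II}}_1$ factor $R$; since $E$ is simple, the type~I case can occur only when $E$ is finite dimensional, in which case both factors are the same matrix algebra. In all cases both are isomorphic to a single hyperfinite finite factor $Q$, and $Q$ is isomorphic to its opposite (transposition on matrices, or the canonical anti-automorphism of $R$). Thus the displayed isomorphism becomes $Q \overline{\otimes} M \cong Q \overline{\otimes} M^{\mathrm{op}}$, and cancelling $Q$ finishes the proof: if $Q \cong R$, then using $M \cong M \overline{\otimes} R$ and $M^{\mathrm{op}} \cong M^{\mathrm{op}} \overline{\otimes} R$ we get $M \cong M^{\mathrm{op}}$; if $Q \cong M_n$, then $M_n(M) \cong M_n(M^{\mathrm{op}})$ and cutting by a minimal projection of $M_n$ again gives $M \cong M^{\mathrm{op}}$. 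This contradicts Proposition~\ref{P_3703_vNNotIsoOpp}. The difficulty is precisely this cancellation of the $E$-side factor, which would fail for a general non-nuclear $E$ because the resulting factors need be neither hyperfinite nor self-opposite; nuclearity is exactly what tames them.
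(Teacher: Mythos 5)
Your proof is correct and follows essentially the same route as the paper: pass to GNS completions at extreme tracial states, use the unique trace of $D_q$ together with Lemma~\ref{L_3705_AffHme}, Lemma~\ref{L_3705_Q}, and Lemma~\ref{L_3704_OppTr} to reduce to an isomorphism $\pi_\sigma(E)'' \overline{\otimes} M \cong [\pi_{\sigma'}(E)'']^{\mathrm{op}} \overline{\otimes} M^{\mathrm{op}}$, and then cancel the injective factor coming from $E$ using $M \cong M \overline{\otimes} R$. The only cosmetic difference is that the paper first replaces $E$ by $E \otimes B_q$ so that $\pi_\sigma(E)'' \cong R$ in all cases, whereas you treat the finite-dimensional (matrix algebra) case by a separate corner-cutting argument; both are fine.
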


\begin{proof}
Exactness follows from Proposition 7.1(iii) of~\cite{Kr2}.

Let $\ta$ be the unique tracial state on~$D_q$
(Proposition~\ref{pcaseproposition}).
Let $R$ be the hyperfinite ${\mathrm{II}}_1$~factor.
We claim that
\begin{equation}\label{Eq_3705_RStab}
R {\overline{\otimes}} \pi_{\ta} (D_q)'' \cong \pi_{\ta} (D_q)''.
\end{equation}
To prove the claim,
let $\om$ be the unique tracial state on~$B_q$.
By Proposition~\ref{pcaseproposition},
there is an isomorphism
$\ph \colon D_q \to B_q \otimes D_q$.
Since $(\om \otimes \ta) \circ \ph$ is a tracial
state on~$D_q$, we have
$(\om \otimes \ta) \circ \ph = \ta$.
Therefore
$\pi_{\om \otimes \ta} (B_q \otimes D_q)'' \cong \pi_{\ta} (D_q)''$.
Since
$\pi_{\om \otimes \ta} (B_q \otimes D_q)''
  \cong \pi_{\om} (B_q)'' {\overline{\otimes}} \pi_{\ta} (D_q)''$
by Lemma~\ref{L_3705_Q},
and $\pi_{\om} (B_q)'' \cong R$,
the claim follows.

We now claim that we may assume that $E \otimes B_q \cong E$.
Indeed, $B_q \otimes D_q \cong D_q$
by Proposition~\ref{pcaseproposition},
so that $(E \otimes B_q) \otimes D_q \cong E \otimes D_q$.
Accordingly, we may assume that $E$ is
infinite dimensional.
By Corollary~9.14 of~\cite{HT},
there is a tracial state on~$E$.
So the Krein-Milman Theorem
provides an extreme tracial state on~$E$.

For any extreme tracial state $\sm$ on~$E$, we have
\begin{equation}\label{Eq_3705_IsoR}
\pi_{\sm} (E)'' \cong R.
\end{equation}
Using Lemma~\ref{L_3705_Q} at the first step,
(\ref{Eq_3705_IsoR}) at the second step,
and (\ref{Eq_3705_RStab}) at the third step, we then get
\begin{equation}\label{Eq_3705_Absb}
\pi_{\sm \otimes \ta} (E \otimes D_q)''
  \cong \pi_{\sm} (E)'' {\overline{\otimes}} \pi_{\ta} (D_q)''
  \cong R {\overline{\otimes}} \pi_{\ta} (D_q)''
  \cong \pi_{\ta} (D_q)''.
\end{equation}

Now suppose that there is an isomorphism
$\ps \colon (E \otimes D_q)^{\mathrm{op}} \to E \otimes D_q$.
Let $\sm$ be any extreme tracial state on~$E$.
It follows from Lemma~\ref{L_3705_AffHme}
that $\sm \otimes \ta$ is an extreme tracial state on
$E \otimes D_q$.
Therefore $(\sm \otimes \ta) \circ \ps$
is an extreme tracial state on
$(E \otimes D_q)^{\mathrm{op}}
 \cong E^{\mathrm{op}} \otimes D_q^{\mathrm{op}}$.
Lemma~\ref{L_3705_AffHme} now provides
an extreme tracial state $\rh$ on~$E^{\mathrm{op}}$
such that
$(\sm \otimes \ta) \circ \ps = \rh \otimes \ta^{\mathrm{op}}$.
The state $\rh^{\mathrm{op}}$ is clearly extreme.
Using (\ref{Eq_3705_Absb}) at the first step,
Lemma~\ref{L_3704_OppTr} at the fourth step,
Lemma~\ref{L_3705_Q} at the fifth step,
and (\ref{Eq_3705_Absb}) with $\rh^{\mathrm{op}}$ in place of~$\sm$
at the sixth step,
we therefore get
\begin{align*}
\pi_{\ta} (D_q)''
&\cong \pi_{\sm \otimes \tau} (E \otimes D_q)''
  \cong \pi_{(\sm \otimes \tau) \circ \ps}
      ((E \otimes D_q)^{\mathrm{op}})''
         \\
&= \pi_{\rh \otimes \ta^{\mathrm{op}}}
        ((E \otimes D_q)^{\mathrm{op}})''
  \cong [\pi_{\rh^{\mathrm{op}} \otimes \ta}
    (E \otimes D_q)'']^{\mathrm{op}}
     \\
&\cong [\pi_{\rh^{\mathrm{op}}} (E)''
      {\overline{\otimes}} \pi_{\ta} (D_q)'']^{\mathrm{op}}
 \cong [\pi_{\ta} (D_q)'']^{\mathrm{op}}.
\end{align*}
This contradicts Proposition~\ref{P_3703_vNNotIsoOpp}.
\end{proof}

We use Proposition~\ref{P_3705_Main}
to give many examples of simple separable exact \ca{s}
not isomorphic to their opposite algebras.
Many other variations are possible.
The ones we give are chosen to demonstrate the possibilities
of nontrivial~$K_1$,
of $K_0$ being the same as that of many different UHF~algebras,
of real rank one rather than zero,
and of having many tracial states.

\begin{theorem}\label{T_3705_Basic}
Let $q$ be an odd prime such that $- 1$ is not a square mod~$q$.
Then there exists a simple separable unital exact \ca~$A$
not isomorphic to its opposite algebra
which is approximately divisible and stably finite,
has stable rank one,
tensorially absorbs the $q^{\infty}$ UHF algebra
and the Jiang-Su algebra, and
has the property that traces determine the order on projections
over~$A$.
In addition, $A$ has the following properties:
\begin{enumerate}
\item\label{T_3705_Basic_Kth}
$K_0 (A) \cong {\mathbb{Z}} \big[ \frac{1}{q} \big]$
with $[1_A] \mapsto 1$
and
$K_0 (A)_{+} \mapsto {\mathbb{Z}} \big[ \frac{1}{q} \big] \cap [0, \infty)$.
\setcounter{Tmp2}{\value{enumi}}
\item\label{T_3705_Basic_K1}
$K_1 (A) = 0$.
\setcounter{TmpEnumi}{\value{enumi}}
\item\label{T_3705_Basic_Cu}
$W (A)
 \cong {\mathbb{Z}} \big[ \tfrac{1}{q} \big]_{+} \amalg (0, \infty)$.
\item\label{T_3705_Basic_RR}
$A$ has real rank zero.
\item\label{T_3705_Basic_T}
$A$ has a unique tracial state.
\item\label{T_3705_UCT}
$A$ satisfies the Universal Coefficient Theorem.
\end{enumerate}
\end{theorem}

\begin{proof}
Take $A$ to be the \ca~$D_q$ of Definition~\ref{D:2.4}.
Then $A \not\cong A^{\mathrm{op}}$ by
Proposition \ref{P_3703_vNNotIsoOpp}
(or equivalently by Proposition~\ref{P_3705_Main} with
$E = \C$).
All the other properties follow from
Proposition~\ref{pcaseproposition} and
Proposition~\ref{C:2.7}.
\end{proof}

\begin{theorem}\label{T_3705_UHF}
Let $q$ be
an odd prime such that $- 1$ is not a square mod~$q$.
Let $B$ be any UHF algebra whose ``supernatural number''
is divisible by arbitrarily large powers of~$q$.
Then there exists a \ca~$A$
as in Theorem~\ref{T_3705_Basic},
except that~(\ref{T_3705_Basic_Kth})
and~(\ref{T_3705_Basic_Cu})
are replaced by:
\begin{enumerate}
\item\label{T_3705_UHF_Kth}
$K_0 (A) \cong K_0 (B)$ as a scaled ordered group.
\setcounter{enumi}{\value{TmpEnumi}}
\item\label{T_3705_UHF_Cu}
$W (A) \cong K_0 (B)_{+} \amalg (0, \infty)$.
\end{enumerate}
\end{theorem}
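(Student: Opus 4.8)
The plan is to take $A = B \otimes D_q$, where $q = 4$ if $p = 2$ and $q = p$ otherwise, so that $B \otimes D_q$ inherits the non-isomorphism-to-opposite property directly from Proposition~\ref{P_3705_Main}. The first thing to verify is that $B$ is a legitimate choice for the factor $E$ in that proposition: a UHF algebra is simple, separable, unital, nuclear, and stably finite, so Proposition~\ref{P_3705_Main} applies and gives $B \otimes D_q$ exact with $B \otimes D_q \not\cong (B \otimes D_q)^{\mathrm{op}}$. This handles the key qualitative conclusion immediately.

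Next I would transport the remaining structural properties from $D_q$ to $A = B \otimes D_q$ by tensoring. Since $D_q$ is approximately divisible, stably finite, has stable rank one, real rank zero, a unique tracial state, and absorbs $B_q$ and~$Z$, and since $B$ is a unital UHF algebra (hence simple, nuclear, unital, with a unique tracial state and itself $Z$-stable and strongly self-absorbing), the tensor product $B \otimes D_q$ retains all of these. Concretely: stable rank one follows from Corollary~6.6 of~\cite{Rordam1}; real rank zero and approximate divisibility follow from the corresponding properties and Theorem~1.4(f) of~\cite{BKR}; uniqueness of the tracial state follows from Lemma~\ref{L_3705_AffHme}; $Z$-stability and $B_q$-absorption follow because $D_q$ already absorbs them; and that traces determine the order on projections is preserved, for which I would cite Proposition~2.6 of~\cite{PhV} exactly as in the proof of Proposition~\ref{pcaseproposition}.

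The genuinely new content is the K-theory computation~(\ref{T_3705_UHF_Kth}) and the Cuntz semigroup~(\ref{T_3705_UHF_Cu}). For $K_*$, the plan is to apply the K\"{u}nneth formula (see~\cite{Sc2}) to $A = B \otimes D_q$. Since $K_0(D_q) \cong \Z[\tfrac1q]$ with $[1] \mapsto 1$, $K_1(D_q) = 0$, and $K_*(B)$ is the torsion-free flat group $K_0(B)$ with $K_1(B) = 0$, the Tor terms vanish and one gets $K_0(A) \cong K_0(B) \otimes_{\Z} \Z[\tfrac1q]$ and $K_1(A) = 0$. The hypothesis that the supernatural number of $B$ is divisible by arbitrarily large powers of~$p$ is exactly what forces $p$ (equivalently $q$, since $\Z[\tfrac14] = \Z[\tfrac12]$) to already be inverted in $K_0(B)$, so that $K_0(B) \otimes_{\Z} \Z[\tfrac1q] \cong K_0(B)$ as ordered groups with the scale $[1_A] \mapsto [1_B]$. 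This identification, that $- \otimes_{\Z} \Z[\tfrac1q]$ acts as the identity on $K_0(B)$ precisely under the divisibility hypothesis, is the step I expect to require the most care, since it is where the number-theoretic hypothesis on~$B$ does its work.

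Finally, with the ordered $K_0$ and the unique tracial state in hand, the Cuntz semigroup~(\ref{T_3705_UHF_Cu}) follows from Remark~\ref{R:2.15} in the same manner as in the proof of Proposition~\ref{pcaseproposition}: $A$ is simple, unital, exact, stably finite, and $Z$-stable, so $W(A) \cong V(A) \amalg \operatorname{LAff}_{\operatorname{b}}(T(A))_{++}$, where $V(A)$ is the positive part of $K_0(A) \cong K_0(B)$ and uniqueness of the trace gives $\operatorname{LAff}_{\operatorname{b}}(T(A))_{++} = (0, \infty)$. This yields $W(A) \cong K_0(B)_{+} \amalg (0, \infty)$, completing the proof.
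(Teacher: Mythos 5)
Your proposal is correct and follows essentially the same route as the paper: take $A = B \otimes D_q$, get $A \not\cong A^{\mathrm{op}}$ and exactness from Proposition~\ref{P_3705_Main}, transport the structural properties by the same citations used in Proposition~\ref{pcaseproposition}, and compute $K_*(A)$ via the K\"{u}nneth formula, with the divisibility hypothesis on the supernatural number of $B$ giving $K_0(B) \otimes_{\Z} \Z\big[\tfrac{1}{q}\big] \cong K_0(B)$ and the Cuntz semigroup then coming from Remark~\ref{R:2.15}. No substantive differences from the paper's argument.
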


\begin{proof}
Let $D_q$ be as in Definition~\ref{D:2.4}.
Take $A = B \otimes D_q$.
Then exactness of $A$ and $A \not\cong A^{\mathrm{op}}$
follow from Proposition~\ref{P_3705_Main}.
Since $D_q$
satisfies the Universal Coefficient Theorem, and $B$
belongs to the nuclear bootstrap category, $A$ satisfies the
Universal Coefficient Theorem.

The condition on $B$ implies that
$K_0 (B) \otimes_{\Z} {\mathbb{Z}} \big[ \tfrac{1}{q} \big]
  \cong K_0 (B)$.
Moreover,
${\operatorname{Tor}}_1^{\Z} \big( K_* (B), \,
                 {\mathbb{Z}} \big[ \tfrac{1}{q} \big] \big)$
is clearly zero.
Since $B$ is in the bootstrap class,
the K\"{u}nneth formula of~\cite{Sc2} gives
$K_0 (A) \cong K_0 (B)$
and $K_1 (A) = 0$.

It is obvious that $A$ is separable and unital.
Simplicity of $A$ follows from
simplicity of $B$ and $D_q$ and nuclearity of~$B$,
by the corollary on page~117 in~\cite{Take1}.
(We warn that this reference systematically
refers to tensor products as ``direct products''.)
Since $D_q$ has a unique tracial state
(by Proposition~\ref{pcaseproposition}),
Lemma~\ref{L_3705_AffHme} implies that
$A$ has a unique tracial state.
Combined with simplicity, this gives stable finiteness.
The algebra $A$ absorbs both the UHF algebra $B_q$ and $Z$ because
$D_q$ does
(by Proposition~\ref{pcaseproposition}).
Since $A$ is stably finite, $B_q$ is a UHF algebra,
and $B_q \otimes A \cong A$,
Corollary~6.6 of~\cite{Rordam1} implies that
${\operatorname{tsr}} (A) = 1$.
The algebra $D_q$ is approximately divisible
by Proposition~\ref{pcaseproposition}.
So $A = B \otimes D_q$ is approximately divisible.
Since $A$
is simple, approximately divisible, exact,
and has a unique tracial state,
it has real rank zero by Theorem~1.4(f) of~\cite{BKR}.
It follows from Proposition~2.6 of~\cite{PhV}
that the order on projections over $A$ is determined by traces.
The computation of $W (A)$ follows from the computation
of $K_0 (A)$ above,
$Z \otimes A \cong A$,
and Remark~\ref{R:2.15}.
\end{proof}

\begin{theorem}\label{T_3705_K1}
Let $q$ be
an odd prime
such that $- 1$ is not a square mod~$q$.
Let $G$ be any countable abelian group.
Then there exists a \ca~$A$
as in Theorem~\ref{T_3705_Basic},
except that~(\ref{T_3705_Basic_K1})
is replaced by:
\begin{enumerate}
\setcounter{enumi}{\value{Tmp2}}
\item\label{T_3705_K1_K1}
$K_1 (A) \cong G \otimes_{\Z} {\mathbb{Z}} \big[ \frac{1}{q} \big]$.
\end{enumerate}
\end{theorem}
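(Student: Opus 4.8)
The plan is to mirror the proof of Theorem~\ref{T_3705_UHF}, replacing the UHF tensor factor by a carefully chosen building block that carries the prescribed $K_1$-group. Set $q = 4$ if $p = 2$ and $q = p$ otherwise, and let $D_q$ be as in Definition~\ref{D:2.4}. I would take $A = E \otimes D_q$, where $E$ is a simple separable unital nuclear \ca{} in the bootstrap class, with a unique tracial state, and with
\[
\big( K_0 (E), \, K_0 (E)_{+}, \, [1_E] \big) \cong (\Z, \, \Z_{+}, \, 1)
\andeqn
K_1 (E) \cong G .
\]
Granting such an~$E$, the invariant of~$A$ is computed exactly as in Theorem~\ref{T_3705_UHF}. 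Since $E$ lies in the bootstrap class, $K_0 (D_q) \cong \Z \big[ \tfrac{1}{p} \big]$ and $K_1 (D_q) = 0$ by Proposition~\ref{pcaseproposition}, and $\Z \big[ \tfrac{1}{p} \big]$ is flat over~$\Z$ (being a localization), the K\"{u}nneth formula of~\cite{Sc2} has vanishing ${\operatorname{Tor}}$-terms and gives
\[
K_0 (A) \cong K_0 (E) \otimes_{\Z} \Z \big[ \tfrac{1}{p} \big]
 \cong \Z \big[ \tfrac{1}{p} \big]
\andeqn
K_1 (A) \cong K_1 (E) \otimes_{\Z} \Z \big[ \tfrac{1}{p} \big]
 \cong G \otimes_{\Z} \Z \big[ \tfrac{1}{p} \big] .
\]
As in Theorem~\ref{T_3705_Basic}, the scaled ordered group structure on $K_0 (A)$ comes out correctly because $A$ is simple with a unique trace, so the order is determined by that trace, which pairs $[1_A] = [1_E] \otimes [1_{D_q}]$ with~$1$.

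The existence of the block~$E$ is the one nonroutine input, and the main obstacle is to realize an \emph{arbitrary} countable abelian group as $K_1$ while simultaneously pinning $K_0 \cong \Z$ and forcing a unique trace. I would obtain $E$ from the range-of-invariant (existence) results of the classification program: the Elliott invariant with $(K_0, K_0^{+}, [1]) = (\Z, \Z_{+}, 1)$, with $K_1 = G$, with a one-point tracial state space, and with the evident pairing, is admissible (weak unperforation of~$\Z$ is automatic), so it is realized by a simple separable unital nuclear \ca{} satisfying the Universal Coefficient Theorem. Torsion in~$G$ is accommodated by assembling $E$ from homogeneous blocks over finite complexes having the appropriate torsion in odd $K$-theory (for instance suspensions of Moore spaces), arranged by a Goodearl-type construction so that the limit is simple with a unique trace. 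Note that $E$ itself need not have real rank zero -- indeed with $K_0 (E) \cong \Z$ and a unique trace it cannot -- but this causes no difficulty, since real rank zero will be recovered for~$A$ below.

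It then remains to verify that $A = E \otimes D_q$ has all the properties listed in Theorem~\ref{T_3705_Basic}, and here I would follow the proof of Theorem~\ref{T_3705_UHF} essentially verbatim, using that $E$ is simple, separable, unital, nuclear, and (being unital with a faithful trace) stably finite. Exactness of~$A$ and $A \not\cong A^{\mathrm{op}}$ follow immediately from Proposition~\ref{P_3705_Main}. Simplicity follows from simplicity of $E$ and $D_q$ together with nuclearity of~$E$; a unique tracial state follows from Lemma~\ref{L_3705_AffHme} and uniqueness of the trace on~$D_q$, whence stable finiteness. Since $D_q$ is approximately divisible and absorbs $B_q$ and the Jiang-Su algebra~$Z$ (Proposition~\ref{pcaseproposition}), so does~$A$; stable rank one then follows from Corollary~6.6 of~\cite{Rordam1}, real rank zero from simplicity, approximate divisibility, exactness, and uniqueness of the trace via Theorem~1.4(f) of~\cite{BKR} (here $K_0 (A) \cong \Z \big[ \tfrac{1}{p} \big]$ is dense in~$\R$, as that argument requires), and the fact that traces determine the order on projections from Proposition~2.6 of~\cite{PhV}. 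Finally $W (A) \cong \Z \big[ \tfrac{1}{p} \big]_{+} \amalg (0, \infty)$ follows from the computation of $K_0 (A)$ above, $Z$-stability, uniqueness of the trace, and Remark~\ref{R:2.15}, exactly as for Theorem~\ref{T_3705_Basic}.
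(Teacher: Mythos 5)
Your proposal is correct and follows the same strategy as the paper: set $A = E \otimes D_q$ for a classifiable building block $E$ with a unique tracial state and $K_1 (E) \cong G$, compute $K_* (A)$ by the K\"{u}nneth formula of~\cite{Sc2} (the ${\operatorname{Tor}}$ terms vanishing because $\Z \big[ \tfrac{1}{q} \big]$ is torsion-free), and then run the checklist from the proof of Theorem~\ref{T_3705_UHF}. The one genuine divergence is the choice of~$E$. The paper takes $K_0 (E) \cong \Z \big[ \tfrac{1}{p} \big]$ with $[1_E] \mapsto 1$, which keeps $E$ inside the real rank zero setting and lets its existence be quoted directly from Theorem~4.20 of~\cite{EG} as a simple unital AH algebra with a unique tracial state. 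You instead take $K_0 (E) \cong \Z$, which --- as you correctly note --- rules out real rank zero for~$E$ and therefore puts its existence outside the scope of~\cite{EG}; you must then appeal to a more general range-of-the-invariant theorem (an Elliott-type ASH construction), which you only sketch. That existence result is indeed in the literature, and after tensoring both choices give the same $K_0 (A) \cong \Z \big[ \tfrac{1}{p} \big]$, so nothing downstream is affected; but the paper's normalization turns the one nonroutine existence step into a one-line citation, whereas your version leaves it as the least pinned-down point of the argument.
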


\begin{proof}
Choose, using Theorem 4.20 of~\cite{EG},
a simple unital AH~algebra~$E$
with a unique tracial state,
such that $K_0 (E) \cong {\mathbb{Z}} \big[ \tfrac{1}{q} \big]$,
with $[1_E] \mapsto 1$,
and such that $K_1 (E) \cong G$.
Let $D_q$ be as in Definition~\ref{D:2.4}.
Take $A = E \otimes D_q$.
Using $E$ in place of~$B$,
proceed as in the proof of Theorem~\ref{T_3705_UHF}.
The only difference is in the computation of $K_* (A)$.
We have
\[
K_0 (E) \otimes_{\Z} {\mathbb{Z}} \big[ \tfrac{1}{q} \big]
  \cong {\mathbb{Z}} \big[ \tfrac{1}{q} \big]
               \otimes_{\Z} {\mathbb{Z}} \big[ \tfrac{1}{q} \big]
  \cong {\mathbb{Z}} \big[ \tfrac{1}{q} \big],
\]
and
\[
{\operatorname{Tor}}_1^{\Z} \big( K_0 (E), \,
                 {\mathbb{Z}} \big[ \tfrac{1}{q} \big] \big)
  = 0
\andeqn
{\operatorname{Tor}}_1^{\Z} \big( K_1 (E), \,
                 {\mathbb{Z}} \big[ \tfrac{1}{q} \big] \big)
  = 0.
\]
So the K\"{u}nneth formula of~\cite{Sc2}
implies that $K_* (A)$ is as claimed.
\end{proof}

\begin{theorem}\label{T_3705_Dt}
Let $q$ be  an odd prime
such that $- 1$ is not a square mod~$q$.
Let $\Dt$ be any Choquet simplex with more than one point.
Then there exists a \ca~$A$
as in Theorem~\ref{T_3705_Basic},
except that
(\ref{T_3705_Basic_Cu}),
(\ref{T_3705_Basic_RR}), and~(\ref{T_3705_Basic_T})
are replaced by:
\begin{enumerate}
\setcounter{enumi}{\value{TmpEnumi}}
\item\label{T_3705_Dt_Cu}
$W (A)
 \cong {\mathbb{Z}} \big[ \tfrac{1}{q} \big]_{+}
    \amalg {\operatorname{LAff}}_{\operatorname{b}} (\Dt)_{++}$.
\item\label{T_3705_Dt_RR}
$A$ has real rank one.
\item\label{T_3705_Dt_T}
$\T (A) \cong \Dt$.
\end{enumerate}
\end{theorem}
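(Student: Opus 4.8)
The plan is to follow the proofs of Theorems~\ref{T_3705_UHF} and~\ref{T_3705_K1} verbatim, taking $A = E \otimes D_q$, with $q = 4$ if $p = 2$ and $q = p$ otherwise, but now choosing $E$ so that its tracial state space is the prescribed simplex~$\Dt$. First I would invoke a range-of-invariant theorem for simple unital AH~algebras to produce a simple separable unital nuclear (hence exact, and, being unital with a trace, stably finite) \ca~$E$ with $K_0 (E) \cong \Z \big[ \tfrac{1}{p} \big]$ with $[1_E] \mapsto 1$ and the order inherited from~$\R$, with $K_1 (E) = 0$, and with $T (E) \cong \Dt$. There is no freedom in the pairing: since $\Z \big[ \tfrac{1}{p} \big] \subseteq \R$ has a unique state, the induced map $T (E) \to S (K_0 (E))$ is necessarily the constant map onto that state, so the Elliott invariant to be realized is consistent. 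Note that, unlike the unique-trace case handled by Theorem~4.20 of~\cite{EG}, here the realizing algebra cannot have real rank zero (the constant pairing below already shows this), so the relevant existence result must be one producing real rank one; the only change from the setup of Theorem~\ref{T_3705_K1} is prescribing a nontrivial~$\Dt$.

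With $E$ in hand, almost everything is a direct transcription of the proof of Theorem~\ref{T_3705_UHF} with $E$ in place of~$B$. Proposition~\ref{P_3705_Main} gives exactness and $A \not\cong A^{\mathrm{op}}$; simplicity follows from simplicity of $E$ and~$D_q$ together with nuclearity of~$E$ (the corollary on page~117 of~\cite{Take1}); Lemma~\ref{L_3705_AffHme}, applied using the unique trace of~$D_q$, gives $T (A) \cong T (E) \cong \Dt$, which is~(\ref{T_3705_Dt_T}), and with simplicity yields stable finiteness; the algebra $A$ absorbs $B_q$ and~$Z$ because $D_q$ does, whence stable rank one by Corollary~6.6 of~\cite{Rordam1} and approximate divisibility; and the order on projections is determined by traces by Proposition~2.6 of~\cite{PhV}. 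For K-theory, the K\"unneth formula of~\cite{Sc2} gives $K_0 (A) \cong K_0 (E) \otimes_{\Z} \Z \big[ \tfrac{1}{q} \big] \cong \Z \big[ \tfrac{1}{p} \big]$ (using $\Z \big[ \tfrac{1}{4} \big] = \Z \big[ \tfrac{1}{2} \big]$ when $p = 2$), with $[1_A] \mapsto 1$, and $K_1 (A) = 0$, the relevant Tor terms vanishing exactly as in Theorem~\ref{T_3705_UHF}; this is~(\ref{T_3705_Basic_Kth}) and~(\ref{T_3705_Basic_K1}). The Cuntz semigroup~(\ref{T_3705_Dt_Cu}) is then immediate from Remark~\ref{R:2.15}, since $V (A)$ is the positive part of $K_0 (A)$ and $T (A) \cong \Dt$.

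The genuinely new point, and the step I expect to be the main obstacle, is the real rank assertion~(\ref{T_3705_Dt_RR}): we must show the real rank is exactly one rather than zero. For the failure of real rank zero I would argue as follows. For every $x \in K_0 (A)$ the function $\widehat{x} \colon T (A) \to \R$ of Remark~\ref{R:2.15} is \emph{constant}, because the state of $K_0 (E) \cong \Z \big[ \tfrac{1}{p} \big]$ is unique, so $\tau_* (x)$ does not depend on $\tau \in T (A) \cong \Dt$. Hence the image of $K_0 (A)$ in $\operatorname{Aff} (\Dt)$ consists only of constant functions with values in $\Z \big[ \tfrac{1}{p} \big]$, and these are not uniformly dense in $\operatorname{Aff} (\Dt)$ once $\Dt$ has more than one point, since then $\operatorname{Aff} (\Dt)$ contains non-constant functions. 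A simple exact approximately divisible unital \ca{} of real rank zero must have this image dense --- this is the real-rank-zero half of the dichotomy in Theorem~1.4 of~\cite{BKR}, the converse half of which produced real rank zero in Proposition~\ref{pcaseproposition} (there $\Dt$ is a point and $\Z \big[ \tfrac{1}{q} \big]$ \emph{is} dense in~$\R$). So $A$ does not have real rank zero. For the upper bound, approximate divisibility and exactness of~$A$ force the real rank to be at most one, again by~\cite{BKR}, and combining the two bounds gives real rank exactly one. The delicate part is matching the precise hypotheses of the real-rank dichotomy in~\cite{BKR} to our (exact, non-nuclear) $A$ and confirming that non-density of $\{ \widehat{x} \colon x \in K_0 (A) \}$ is exactly the obstruction to real rank zero; the rest is routine.
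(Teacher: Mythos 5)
Your proposal is correct and follows essentially the same route as the paper, which takes $A = E \otimes D_q$ for a simple unital AI~algebra~$E$ (Theorem~3.9 of~\cite{Thn}) with $K_0 (E) \cong \Z \big[ \tfrac{1}{p} \big]$ and $T (E) \cong \Dt$, and then transcribes the proof of Theorem~\ref{T_3705_UHF} with $E$ in place of~$B$. The only cosmetic difference is in the real rank step: the paper rules out real rank zero by observing directly that, since the scaled ordered group $K_0 (A)$ has a unique state, projections cannot distinguish the tracial states, and it obtains real rank at most one from ${\operatorname{tsr}} (A) = 1$ via Proposition~1.2 of~\cite{BP}, rather than invoking~\cite{BKR} for both bounds as you do.
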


\begin{proof}
Using Theorem 3.9 of~\cite{Thn}, choose
a simple unital AI~algebra $E$
such that $K_0 (E) \cong {\mathbb{Z}} \big[ \tfrac{1}{q} \big]$,
with $[1_E] \mapsto 1$,
and $\T (E) \cong \Dt$.
Let $D_q$ be as in Definition~\ref{D:2.4}.
Take $A = E \otimes D_q$.
Using $E$ in place of~$B$,
proceed as in the proof of Theorem~\ref{T_3705_UHF}.
The differences are as follows.
Here,
since $D_q$ has a unique tracial state
(by Proposition~\ref{pcaseproposition}),
Lemma~\ref{L_3705_AffHme}
gives an affine homeomorphism from $\T (E) \cong \Dt$
to $\T (A)$.
The computation of $W (A)$ is as before,
but the answer is different because $\T (A) \cong \Dt$
instead of being a point.
Since there is only one state on the scaled ordered group $K_0 (A)$,
all tracial states must agree on all projections in~$A$.
Since $\Dt$ has more than one point,
the projections in~$A$ do not distinguish the tracial states.
So $A$ does not have real rank zero by Theorem 1.4(e)
in~\cite{BKR} and Theorem 5.11 in~\cite{Haa}.
However, we still get ${\operatorname{tsr}} (A) = 1$,
so $A$ has real rank at most~$1$ by Proposition~1.2 of~\cite{BP}.
\end{proof}

\begin{remark}\label{R_3705_Unctbl}
Each of Theorem~\ref{T_3705_UHF},
Theorem~\ref{T_3705_K1},
and Theorem~\ref{T_3705_Dt}
(separately)
gives uncountably many mutually nonisomorphic \ca{s}
satisfying the Universal Coefficient Theorem.
\end{remark}

\section{Open questions}\label{Sec_Probs}

\begin{question}\label{Q_3705_D5}
Let $q$ be an odd prime such that $-1$ is a square mod~$q$.
Is is still true that $D_q$,
as in Definition~\ref{D:2.4},
is not isomorphic to its opposite algebra?
\end{question}

The invariant we use,
the obstruction to lifting,
no longer distinguishes $D_q$ and $(D_q)^{\mathrm{op}}$,
but this does not mean that they are isomorphic.

Even if $D_q \cong (D_q)^{\mathrm{op}}$,
different methods might give a positive answer to the following
question.

\begin{question}\label{Q_3705_pIs5}
Let $q$ be an odd prime such that $-1$ is a square mod~$q$.
Does there exist a simple separable unital exact stably finite \ca~$A$
not isomorphic to its opposite algebra
such that $K_0 (A) \cong {\mathbb{Z}} \big[ \frac{1}{q} \big]$
and $K_1 (A) = 0$?
\end{question}

Of course,
we would really like to get all the other properties
in Theorem~\ref{T_3705_Basic} as well,
in particular, unique tracial state, real rank zero,
and $B_q \otimes A \cong A$.

\begin{question}\label{Q_3705_JS}
Does there exist
a simple separable unital exact stably finite \ca~$A$
not isomorphic to its opposite algebra
such that $K_0 (A) \cong {\mathbb{Z}}$, with $[1_A] \mapsto 1$,
and $K_1 (A) = 0$?
\end{question}

Such an algebra would have no nontrivial projections.

\begin{question}\label{Q_3705_PI}
Does there exist
a simple separable unital purely infinite \ca~$A$
not isomorphic to its opposite algebra?
\end{question}

\begin{question}\label{Q_2816_Nuc}
Does there exist a simple separable unital nuclear \ca{} $A$
not isomorphic to its opposite algebra?
\end{question}

Under an additional axiom of set theory,
nonseparable examples are known \cite{FrhHbg}.
By classification, a separable example can't both absorb the Jiang-Su algebra
tensorially and satisfy the Universal Coefficient Theorem.

\section*{Acknowledgments}

Some of this work was carried out during a
summer school held at the CRM in Bellaterra,
during a visit to the Fields Institute in Toronto, and
during a visit by the second author to the University of Oregon.
Both authors are grateful to the CRM and the Fields
Institute for their hospitality.
The second author would also like
to thank the University of Oregon for its hospitality.

\end{document}